\crefname{hypothesis}{Hypothesis}{Hypotheses}
\title{Optimal Control of Perfect Plasticity\\ Part I: Stress Tracking\thanks{Submitted to the editors DATE.
\funding{This research was supported by the German Research Foundation (DFG) under grant 
number~ME~3281/9-1 within the priority program Non-smooth and Complementarity-based
Distributed Parameter Systems: Simulation and Hierarchical Optimization (SPP~1962).}}}
\author{
Christian Meyer\thanks{TU Dortmund, Faculty of Mathematics,
Vogelpothsweg 87, 44227 Dortmund, Germany 
  (\email{christian2.meyer@tu-dortmund.de},
  \email{stephan.walther@tu-dortmund.de},
  \url{http://www.mathematik.tu-dortmund.de/lsx}).}
\and Stephan Walther\footnotemark[2]}
\newcommand{\wordDefinition}[1]{\emph{#1}}
\newcommand{\shortspace}{\text{ \ \ }}
\newcommand{\mediumspace}{\text{ \ \ \ \ }}
\newcommand{\largespace}{\text{ \ \ \ \ \ \ }}
\newcommand{\bdot}{\boldsymbol{.}}
\newcommand{\boverdot}[1]{\overset{\bdot}{#1}}
\newcommand{\Rn}{\mathbb{R}^{n}}
\newcommand{\Rnns}{\mathbb{R}^{n \times n}_s}
\newcommand{\symnabla}{\nabla^{s}}
\newcommand{\scalarproduct}[3]{\left( #1 , #2 \right)_{#3}}
\newcommand{\norm}[2]{\|#1\|_{#2}}
\newcommand{\sequence}[2]{\{ #1_{#2} \}_{#2 \in \mathbb{N}}}
\newcommand{\GG}{\mathcal{G}}
\newcommand{\TT}{\mathcal{T}}
\renewcommand{\SS}{\mathcal{S}}
\newcommand{\EE}{\mathcal{E}}
\newcommand{\XX}{\mathcal{X}}
\newcommand{\LL}{\mathcal{L}}
\newcommand{\KK}{\mathcal{K}}
\newcommand{\N}{\mathbb{N}}
\newcommand{\R}{\mathbb{R}}
\newcommand{\embed}{\hookrightarrow}
\newcommand{\dual}[2]{\langle #1 , #2 \rangle}
\newcommand{\Rs}{\R^{d\times d}_\textup{sym}}
\newcommand{\Cb}{\mathbb{C}}
\newcommand{\Bb}{\mathbb{B}}
\renewcommand{\div}{\operatorname{div}}
\newcommand{\maxs}{\operatorname{max}}
\newcommand{\supp}{\operatorname{supp}}
\renewcommand{\div}{\operatorname{div}}
\renewcommand{\ker}{\operatorname{ker}}
\newcommand{\tr}{\operatorname{tr}}
\newcommand{\dist}{\operatorname{dist}}
\newcommand{\bL}{\mathbf{L}}
\newcommand{\Lt}{\mathbb{L}}
\newcommand{\bW}{\mathbf{W}}
\newcommand{\Wt}{\mathbb{W}}
\newcommand{\bH}{\mathbf{H}}
\newcommand{\Ht}{\mathbb{H}}
\newtheorem{assumption}[theorem]{Assumption}
\newtheorem{notation and assumption}[theorem]{Notation and Assumption}
\begin{document}

\maketitle

\begin{abstract}
    The paper is concerned with an optimal control problem governed by the rate-independent system 
    of quasi-static perfect elasto-plasticity. 
    The objective is to optimize the stress field by controlling the displacement at prescribed parts of the boundary.
    The control thus enters the system in the Dirichlet boundary conditions. Therefore, 
    the safe load condition is automatically fulfilled so that the system admits a solution, whose stress field is unique. 
    This gives rise to a well defined control-to-state operator, which is continuous but 
    not G\^ateaux-differentiable. 
    The control-to-state map is therefore regularized, first by means of the Yosida regularization
    and then by a second smoothing in order to obtain a smooth problem. 
    The approximation of global minimizers of the original non-smooth optimal control problem is 
    shown and optimality conditions for the regularized problem are established. 
    A numerical example illustrates the feasibility of the smoothing approach.
\end{abstract}

\begin{keywords}
    Optimal control of variational inequalities,  perfect plasticity, rate-independent systems, 
    Yosida regularization, first-order necessary optimality conditions, Dirichlet control problems
\end{keywords}

\begin{AMS}
    49J20, 49K20, 74C05
\end{AMS}

\section{Introduction}
\label{sec:1}

We consider the following optimal control problem governed by the equations of 
\emph{quasi-static perfect plasticity} at small strain:
\begin{equation}\label{eq:optprobformal}
\tag{P}
\left\{\quad
\begin{aligned}
	\min \shortspace & J(\sigma,\ell) := \Psi(\sigma,\ell) + \frac{\alpha}{2}\norm{\boverdot{\ell}}{L^2(\XX_c)}^2, \\
	\text{s.t.} \shortspace & 
    \begin{aligned}[t]
    	- \div \sigma &= 0  &&\text{ in } \Omega, \\
	    \sigma &= \mathbb{C} (\symnabla u - z)  &&\text{ in } \Omega, \\
	    \boverdot{z} &\in \partial I_{\mathcal{K}(\Omega)}(\sigma)   &&\text{ in } \Omega, \\
	    u &= u_D   &&\text{ on } \Gamma_D, \\
	    \sigma\nu &= 0   &&\text{ on } \Gamma_N, \\
	    u(0) &= u_0, \quad \sigma(0) = \sigma_0  &&\text{ in } \Omega.
    \end{aligned}\\	
	\text{and} \shortspace & u_D = \GG \ell + \mathfrak{a}, \quad \ell(0) = \ell(T) = 0.
\end{aligned}\right.
\end{equation}
Herein, $u:(0,T)\times \Omega \to \R^n$, $n=2,3$, is the displacement field, while 
$\sigma, z:(0,T)\times \Omega \to \R^{n\times n}$ are stress tensor and plastic strain.
The boundary of $\Omega$ is split in two disjoint parts $\Gamma_D$ and $\Gamma_N$ 
with outward unit normal $\nu$. Moreover, $\Cb$ is the elasticity tensor 
and $\KK(\Omega)$ denotes the set of feasible stresses. The initial data $u_0$ and $\sigma_0$ 
are given and fixed. The Dirichlet data $u_D$ arises from an artificial control variable $\ell$ 
through a linear operator $\GG$ in combination with a given offset $\mathfrak{a}$. 
In principle, $\GG$ could be an arbitrary linear operator (fulfilling certain assumptions, see below), 
but in \cref{sec:numericalExperiments} it is chosen to be the solution operator of linear elasticity 
which is the reason for calling $\ell$ \emph{pseudo forces}.
Finally, $\XX_c$ is a suitably chosen control space and $\alpha>0$ a fixed Tikhonov regularization 
parameter. The objective $\Psi$ only contains the stress field and neither the displacement nor the 
plastic strain. This is why the optimal control problem \eqref{eq:optprobformal} 
is termed \emph{stress tracking problem}. 
A mathematically rigorous version of \eqref{eq:optprobformal} involving the functions space 
and a rigorous notion of solutions for the state equation will be formulated in 
\cref{sec:exopt} below.
The precise assumptions on the data are given in \cref{sec:2}.
Regarding to a detailed description and derivation of the plasticity model, 
we refer to \cite{OttosenRistinmaa2005:1}.

Let us shortly comment on our choice of the control variable $\ell$.
It is well known that the system of perfect plasticity only admits a solution under 
a certain additional assumption, also known as \emph{safe load condition}, see e.g.~\cite{suquet, dalMaso}. 
This condition roughly says that the applied loads must allow for the existence of a stress field that fulfills the balance of 
momentum and at the same time stays in the interior of the feasible set $\KK(\Omega)$.
Thus, if one uses exterior loads as control variables, the safe load condition arises as 
additional constraint in the optimal control problem, but, at least up to our knowledge, 
it is an open question how to deal with this additional constraint. 
We therefore choose the Dirichlet displacement as control variables and set the exterior loads 
in the balance of momentum to zero. Then the safe load condition is automatically fulfilled, but
we are faced with a Dirichlet boundary control problem. Problems of this kind provide a particular 
challenge, since ``standard'' $L^2$-type spaces lead to regularity issues, see e.g.~\cite{casray06, mayranvex13}. 
To overcome this challenge, we introduce the Dirichlet data as 
the trace of an $H^1$-function in the domain $\Omega$, as also proposed 
e.g.~in \cite{gudi1, gudi2}.
In our approach, the $H^1$-function arises as 
a solution of another linear elliptic equation hidden behind the operator $\GG$.
The inhomogeneity in this equation, i.e., the pseudo force $\ell$, then serves as control variable. 
By the last constraints in \eqref{eq:optprobformal}, it is forced to vanish at the beginning and in the end time.
These additional constraints are motivated by the application we have in mind: 
in practice, one is often interested in reaching a desired shape and, at the same time, 
optimizing the stress distribution at end time (e.g., keeping it as small as possible). 
The desired shape is given in form of the offset $\mathfrak{a}$ and the condition $\ell(T) = 0$ 
ensures that it is indeed reached at end time. 
At the beginning of the process, control variable is also assumed to vanish ($\ell(0) = 0$), but in between 
it is allowed to alter the process in order to optimize the stress distribution. 
More general control constraints are possible as well and can easily be incorporated into our analysis, 
but, to keep the discussion concise, we restrict ourselves to this particular setting.

The present paper is the first of two papers. In a companion paper \cite{mw20}, 
we draw our attention to the displacement tracking problem. 
While the stress tracking may be seen more important from an application point of view and allows 
a comparatively comprehensive analysis, 
the displacement tracking is mathematically more interesting and by far more challenging. 
This is due to the lack of uniqueness and regularity of the displacement field in case of perfect plasticity, see e.g.~\cite{suquet, temam}. 

Let us put our work into perspective. 
Optimal control of elasto-plastic deformation has been considered 
from a mathematical perspective in various articles, in particular concerning the static case, 
see e.g.~\cite{HerzogMeyerWachsmuth2010:2, allaire} and the references therein.
When it comes to the (physically much more reasonable) quasi-static case however, 
the literature becomes rather scarce.
The only contributions in this field we are aware of are 
\cite{wachsmuth, wac12, wac15, wac16, paperAbstract}. 
However, all of these works
deal with problems involving hardening, which essentially simplifies the analysis.
Quasi-static elasto-plasticity falls into the class of rate-independent systems. 
The mathematical properties of such a system strongly depend on the underlying energy 
functional. If the latter is uniformly convex, then the system admits a unique and time-continuous
(differential) solution in the energy space. This however changes, if the energy lacks convexity, 
and it is even not clear how to define a solution in this case. For an overview over rate-independent 
processes and the various notions of solutions, we refer to \cite{MielkeRoubicek2015}.
Hardening leads to a uniform convex energy functional. In contrast to this, 
perfect plasticity may be seen as limit case in this respect, since the energy is convex, but not 
uniformly convex. Therefore, as already mentioned above,
parts of the solution, namely displacement and plastic strain, 
lack uniqueness and regularity, whereas the stress is unique and provides the regularity 
expected for the uniformly convex case. This behavior carries over to the optimal control problem. 
It turns out that, as long as the stress tracking is considered, the optimal control problem 
can be treated by similar techniques as in case with hardening and one obtains 
comparable results concerning existence of optimal solution and their approximation 
via regularization. For the case with hardening, this has been elaborated in \cite{wac12, wac15, wac16}. 
This however changes, if the displacement tracking is considered, as we will see in the companion paper. 
To the best of our knowledge, our two papers are the first contributions dealing with 
optimal control of perfect plasticity, and it is remarkable that 
the stress tracking allows for similar results as in the case with hardening, whereas the 
non-uniform convexity of the energy takes its full effect when it comes to the displacement tracking.

The paper is organized as follows:
After introducing our notation and standing assumptions in \cref{sec:2}, we turn to the analysis 
of the state system in \cref{sec:3}.
We establish the existence of a solution by means of the Yosida regularization of the 
convex subdifferential $\partial I_{\KK(\Omega)}$, which is afterwards also used for 
the regularization of the optimal control problem. 
The underlying analysis follows the lines of \cite{suquet}, but we slightly extend the known results 
and therefore present the arguments in detail.
Section \ref{sec:exopt} is then devoted to the proof of existence of an optimal solution
and its approximation via Yosida regularization. 
The regularized optimal control problems are still not smooth, since the control-to-state map is not G\^ateaux-differentiable 
in general. Therefore, we show for the special case of the von Mises yield condition how to 
obtain a differentiable problem by means of a second smoothing. This allows us to derive optimality conditions 
involving an adjoint equation in \cref{sec:optimalityConditions}.
In \cref{sec:numericalExperiments}, we first specify the operator $\GG$ and deduce the particular form of the 
gradient of the objective functional reduced to the control variable only. Based on that, we have implemented a gradient descent method. 
The paper ends with an illustrative numerical example.

\section{Notation and Standing Assumptions}
\label{sec:2}

We start with a short introduction in the notation used throughout the paper.

\paragraph{Notation}
Given two vector spaces $X$ and $Y$, we denote the space of linear and continuous functions
from $X$ into $Y$ by $\LL(X, Y)$. If $X = Y$, we simply write $\LL(X)$. 
The dual space of $X$ is denoted by $X^* = \LL(X, \mathbb{R})$.
If $H$ is a Hilbert space, we denote its scalarproduct by $\scalarproduct{\cdot}{\cdot}{H}$.
For the whole paper, we fix the final time $T > 0$.
For $t > 0$ we denote the Bochner space of square-integrable functions on the
time interval $[0,t]$ by $L^2(0,t; X)$, the Bochner-Sobolev space by $H^1(0,t; X)$
and the space of continuous functions by $C([0,t]; X)$
and abbreviate $L^2(X) := L^2(0,T; X)$, $H^1(X) := H^1(0,T; X)$ and $C(X) := C([0,T]; X)$.
When $G \in \LL(X; Y)$ is a linear and continuous operator, we can define an operator
in $\LL(L^2(X); L^2(Y))$ by $G(u)(t) := G(u(t))$ for all $u \in L^2(X)$ and for almost all $t \in [0,T]$,
we denote this operator also by $G$, that is, $G \in \LL(L^2(X); L^2(Y))$, and analog for
Bochner-Sobolev spaces, i.e., $G \in \LL(H^1(X); H^1(Y))$.
Given a coercive operator $G \in \LL(H)$ in a Hilbert space $H$, 
we denote its coercivity constant by $\gamma_G$, i.e., $\scalarproduct{Gh}{h}{H} \geq 
\gamma_G \norm{h}{H}^2$ for all $h \in H$.
With this operator we can define a new scalar product, which induces an equivalent norm, by
$H \times H \ni (h_1, h_2) \mapsto \scalarproduct{Gh_1}{h_2}{H} \in \mathbb{R}$.
We denote the Hilbert space equipped with this scalar product by $H_G$, that is
$\scalarproduct{h_1}{h_2}{H_G} = \scalarproduct{Gh_1}{h_2}{H}$ for all $h_1,h_2 \in H$.
If $p \in [1, \infty]$, then we denote its conjugate exponent by
$p'$, that is $\frac{1}{p} + \frac{1}{p'} = 1$.
Finally, by $\Rnns$, we denote the space of symmetric matrices and 
$c, C>0$ are generic constants.

\subsection*{Standing Assumptions}

The following standing assumptions are tacitly assumed for the rest of the
paper without mentioning them every time.

\paragraph{Domain}
The domain $\Omega\subset\R^n$, $n\in \mathbb{N}$, $n \geq 2$, is bounded 
with Lipschitz boundary $\Gamma$. The boundary consists of two disjoint 
measurable parts $\Gamma_N$ and $\Gamma_D$
such that $\Gamma=\Gamma_N \cup \Gamma_D$. While 
$\Gamma_N$ is a relatively open subset, $\Gamma_D$ is a relatively closed 
subset of $\Gamma$ with positive measure. 
In addition, the set $\Omega \cup \Gamma_N$ is regular in the sense of Gr\"oger, cf.\ \cite{Gro89}.

\paragraph{Spaces}
Throughout the paper, by $L^p(\Omega; M)$ we denote Lebesgue spaces with values in $M$, 
where $p \in [1, \infty]$ and $M$ is a finite dimensional space.
To shorten notation, we abbreviate
\begin{equation*}
    \bL^p(\Omega) :=  L^p(\Omega; \R^n) \quad \text{and}
    \quad \Lt^p(\Omega) := L^p(\Omega;\Rnns)
\end{equation*}
and define $\bL^p(\Lambda)$ and $\Lt^p(\Lambda)$ analogously for a measurable subset $\Lambda$ of the boundary $\Gamma$.
Given $s\in \N$ and $p \in [1,\infty]$, the Sobolev spaces of vector- resp.\ tensor-valued functions are denoted by
\begin{equation*}
\begin{aligned}
    \bW^{s,p}(\Omega) &:= W^{s,p}(\Omega; \R^n), \quad & \bH^s(\Omega) &:= \bW^{s,2}(\Omega), \\
    \Wt^{s,p}(\Omega) &:= W^{s,p}(\Omega;\Rnns), \quad & \Ht^s(\Omega) &:= \Wt^{s,2}(\Omega).
\end{aligned}
\end{equation*}
Furthermore, set 
\begin{equation}\label{eq:sobolevdiri}    
    \bW_D^{1,p}(\Omega) := 
    \overline{\{\psi|_\Omega : \psi \in C^\infty_c(\R^n;\R^n),\; \supp(\psi) \cap \Gamma_D = \emptyset\}}^{\bW^{1,p}(\Omega)}
\end{equation}
and define $\bH^1_D(\Omega)$ analogously. The dual of $\bW^{1,p'}_D(\Omega)$ and $\bH^1_D(\Omega)$ are denoted by 
$\bW^{-1,p}_D(\Omega)$ and $\bH^{-1}_D(\Omega)$, respectively.

Moreover, we assume that $\XX$ is a real Banach space, $\XX_c$ is a Hilbert space and that
$\XX_c$ is compactly embedded into $\XX$. The elements in $\XX$ and $\XX_c$ are called
\wordDefinition{pseudo forces}. Based on these spaces, the control space is defined by
\begin{align*}
	H^1_0(\XX_c) := \{ \ell \in H^1(\XX_c) : \ell(0) = \ell(T) = 0 \}.
\end{align*}

\paragraph{Coefficients}
The elasticity tensor and the hardening parameter 
satisfy $\Cb, \Bb \in \LL(\Rs)$
and are symmetric and coercive, i.e., there exist constants $\underline{c}>0$
and $\underline{b} > 0$ such that 
\begin{align*}
	\scalarproduct{\mathbb{C}\sigma}{\sigma}{\Rnns} \geq \underline{c} \ \norm{\sigma}{\Rnns}^2
	\mediumspace \text{and} \mediumspace
	\scalarproduct{\mathbb{B}\sigma}{\sigma}{\Rnns} \geq \underline{b} \ \norm{\sigma}{\Rnns}^2
\end{align*}
for all $\sigma \in \Rnns$.
In addition we set $\mathbb{A} := \mathbb{C}^{-1}$ and note that
$\scalarproduct{\mathbb{A}\sigma}{\sigma}{\Rnns} \geq \frac{\underline{c}}{\norm{\mathbb{C}}{}^2}
\norm{\sigma}{\Rnns}^2$ for all $\sigma \in \Rnns$ holds.
Let us note that $\mathbb{C}$ and $\mathbb{B}$ could also depend on the space, however, to
keep the discussion concise, we restrict ourselves to this setting.
 
\paragraph{Initial data}  For the initial stress field $\sigma_0$, we assume that $\sigma_0 \in \Lt^{\overline p}(\Omega)$, 
where $\overline{p} > 2$ is specified in \cref{lem:w1sExistence} below. 
The initial displacement will be given by the initial Dirichlet data (at least in the regularized case), 
see \cref{subsec:3.2} below.
 
\paragraph{Operators}
Throughout the paper, $\nabla^s := \frac{1}{2}(\nabla + \nabla^\top): \bW^{1,p}(\Omega) \to \Lt^p(\Omega)$ denotes the linearized strain.
Its restriction to $\bW^{1,p}_D(\Omega)$ is denoted by the same symbol and, for the adjoint of this restriction, we write
$-\div := (\nabla^s)^* : \Lt^{p'}(\Omega) \to \bW^{-1,p'}_D(\Omega)$.

Let $\KK \subset \Lt^2(\Omega)$ be a closed and convex set. We denote the indicator function by
\begin{equation*}
I_{\KK} : \Lt^2(\Omega) \rightarrow \{ 0, \infty \}, \largespace \tau \mapsto 
\begin{cases}
	0, & \tau \in \KK, \\     
	\infty, & \tau \notin \KK.
\end{cases}                
\end{equation*}
By $\partial I_\KK : \Lt^2(\Omega) \rightarrow 2^{\Lt^2(\Omega)}$ we denote the subdifferential of the indicator function.
For $\lambda > 0$, the Yosida regularization is given by
\begin{align*}
	I_\lambda : \Lt^2(\Omega) \rightarrow \mathbb{R}, \largespace \tau \mapsto \frac{1}{2\lambda}
	\norm{\tau - \pi_\KK(\tau)}{\Lt^2(\Omega)}^2,
\end{align*}
where $\pi_\KK$ is the projection onto $\KK$ in $\Lt^2(\Omega)$, and its Fr\'echet derivative is
\begin{align*}
	\partial I_\lambda(\tau) = \frac{1}{\lambda} (\tau - \pi_\KK(\tau)).
\end{align*}
When $\lambda = 0$ we define $I_\lambda = I_0 := I_\KK$.
For a sequence $\sequence{\lambda}{n} \subset (0, \infty)$ we abbreviate $I_n := I_{\lambda_n}$.

\paragraph{Optimization Problem}
By 
\begin{equation*}
    J : H^1(\Lt^2(\Omega)) \times H^1(\XX_c) \rightarrow \mathbb{R}, \quad
    J(\sigma, \ell) := \Psi(\sigma, \ell) + \frac{\alpha}{2}\norm{\boverdot{\ell}}{L^2(\XX_c)}
\end{equation*}
we denote the objective function.
We assume that $\Psi: H^1(\Lt^2(\Omega)) \times H^1(\XX_c) \to \R$
is weakly lower semicontinuous, continuous and bounded from below and that the Tikhonov paramenter
$\alpha$ is a positive constant.
Finally, $\GG$ is a linear and continuous operator from $\XX$ to $\bH^{1}(\Omega)$ 
and $\mathfrak{a}\in H^1(\bH^{1}(\Omega))$ is given.

\section{State Equation}
\label{sec:3}

We begin our investigation with the state equation. At first we give the definition of a
\wordDefinition{reduced solution}, that is, a notion of solutions involving only the stress.
Then we provide some results concerning this definition. In \cref{subsec:3.2}
we prove the existence of such a solution by regularization.

The formal strong formulation of the state equation reads
\begin{subequations}\label{eq:stateEquation}
\begin{alignat}{2}
	- \div \sigma &= 0 \largespace \largespace &&\text{ in } \Omega, \label{eq:stateEquationA} \\
	\sigma &= \mathbb{C} (\symnabla u - z) \largespace \largespace &&\text{ in } \Omega
	\label{eq:stateEquationB}, \\
	\boverdot{z} &\in \partial I_{\mathcal{K}(\Omega)}(\sigma)
	 \largespace \largespace &&\text{ in } \Omega, \label{eq:stateEquationC} \\
	u &= u_D \largespace \largespace  &&\text{ on } \Gamma_D, \\
	\sigma\nu &= 0 \largespace \largespace  &&\text{ on } \Gamma_N \label{eq:stateEquationE}, \\
	u(0) &= u_0, \mediumspace \sigma(0) = \sigma_0 \largespace &&\text{ in } \Omega.
\end{alignat}
\end{subequations}
Herein, equation \cref{eq:stateEquationA} is the \wordDefinition{balance of momentum},
\cref{eq:stateEquationB} is the additive split of the symmetric gradient of the displacement
(the strain) into an elastic part $e = \mathbb{A}\sigma$ and a plastic part $z$. The inclusion
\cref{eq:stateEquationC} is the \wordDefinition{flow rule}, saying that the plastic part of the strain only changes
when the stress $\sigma$ has reached the \wordDefinition{yield boundary}, that is, the
boundary of $\KK(\Omega)$.

\subsection{Definitions and Auxiliary Results}
\label{subsec:3.1}

The definition of a \wordDefinition{reduced solution} of \cref{eq:stateEquation} consists of two parts,
the \wordDefinition{equilibrium condition} and the \wordDefinition{flow rule} (resp.~flow rule inequality). 
The equilibrium condition is the weak formulation of \cref{eq:stateEquationA} and
\cref{eq:stateEquationE}, while the flow rule can be seen as a weak formulation of
\cref{eq:stateEquationC}.

\begin{definition}[Equilibrium condition]
	We define the set of stresses which fulfill the \wordDefinition{equilibrium condition} as
	\begin{align*}
		\mathcal{E}(\Omega) := \ker(\div) = \{ \tau \in \Lt^2(\Omega) : \scalarproduct{\tau}{\symnabla\varphi}{\Lt^2(\Omega)} = 0
		\ \forall \varphi \in \bH^1_D(\Omega) \}.
	\end{align*}
\end{definition}

\begin{definition}[Admissible stresses]
\label{def:admissibleStresses}
	Let $K \subset \Rnns$ be a closed and convex set. We define
	the set of \wordDefinition{admissible stresses} as
	\begin{align*}
		\mathcal{K}(\Omega) := \{ \tau \in \Lt^2(\Omega) : \tau(x) \in K \text{ f.a.a. } x \in \Omega \}.
	\end{align*}
\end{definition}

For the rest of this section, we impose the following

\begin{assumption}[Dirichlet data and initial condition]
\label{assu:standingAssumptionsForDefinitionAndExistence}
	\begin{itemize}
		\item[(i)]
		We fix the Dirichlet displacement $u_D \in H^1(\bH^{1}(\Omega))$ and
		assume that the initial condition fulfills
		$\sigma_0 \in \mathcal{E}(\Omega) \cap \mathcal{K}(\Omega)$.
	
		\item[(ii)]
		The sequence
		$\{ u_{D,n} \}_{n \in \mathbb{N}} \subset H^1(\bH^{1}(\Omega))$ fulfills
		$u_{D,n} \rightharpoonup u_D$ in $H^1(\bH^{1}(\Omega))$,
		$u_{D,n} \rightarrow u_D$ in $L^2(\bH^1(\Omega))$
		and $u_{D, n}(T) \rightarrow u_D(T)$ in $\bH^1(\Omega)$.
		\end{itemize}
\end{assumption}

We are now in a position to give the definition of a \wordDefinition{reduced
solution} to \cref{eq:stateEquation}.

\begin{definition}[Reduced solution of the state equation]
\label{def:stronglyReducedSolutionOfStateEquation}
	A function $\sigma \in H^1(\Lt^2(\Omega))$ is called \wordDefinition{reduced solution}
	of \cref{eq:stateEquation} (with respect to $u_D$), if, for almost all $t\in (0,T)$, it holds
	\begin{subequations}\label{eq:redsol}
    \begin{align}
        \sigma(t) &\in \mathcal{E}(\Omega) \cap \KK(\Omega),\\
        \scalarproduct{\mathbb{A} \boverdot{\sigma}(t) - \symnabla \boverdot{u_D}(t)}
		{\tau - \sigma(t)}{\Lt^2(\Omega)} &\geq 0
		\quad \forall \tau \in \mathcal{E}(\Omega) \cap \mathcal{K}(\Omega),\label{eq:flowRuleInequality}\\
		\sigma(0) &= \sigma_0.
    \end{align}    		
	\end{subequations}
	The inequality in \cref{eq:flowRuleInequality} will be frequently termed as \emph{flow rule inequality}.
\end{definition}

Note that the definitions above correspond to \cite[Plasticity Problem II]{johnson}
and the definition given in \cite[1.4 Formulations.~R\'esultats]{suquet}.
In order to \emph{formally} derive the flow rule from
\cref{eq:stateEquationC}, one replaces $z$ by $\symnabla u - \mathbb{A}\sigma$
and use the definition of the subdifferential to obtain the variational inequality
\begin{align*}
	\scalarproduct{\mathbb{A} \boverdot{\sigma}(t) - \symnabla \boverdot{u}(t)}
	{\tau - \sigma(t)}{\Lt^2(\Omega)} \geq 0 \mediumspace \forall \tau \in \mathcal{K}(\Omega)
	\text{ and f.a.a. } t \in [0,T].
\end{align*}
Restricting now the test functions to $\mathcal{E}(\Omega) \cap \mathcal{K}(\Omega)$,
one can exchange $\symnabla \boverdot{u}$ with $\symnabla \boverdot{u}_D$,
which eliminates the unknown displacement.

We also mention that in \cite{dalMaso} the problem of perfect plasticity
was analyzed in the context of \emph{quasistatic evolutions}, also called
\emph{energetic solutions} of \emph{rate-independent systems}. The definition
given therein is equivalent to the one in \cite[1.4 Formulations. R\'esultats]{suquet}
(cf.~also \cite[Theorem 6.1 and Remark 6.3]{dalMaso}) and thus equivalent to ours.
This definition was also used in \cite{Mielke}. 

Let us proceed with some results concerning the definition above. We start with the
uniqueness of the stress.

\begin{lemma}[Uniqueness of the stress]
\label{lem:uniquenessOfAStronglyReducedSolution}
	Assume that $\sigma_1, \sigma_2 \in H^1(\Lt^2(\Omega))$ are
	two reduced solutions of \cref{eq:stateEquation}.
	Then $\sigma_1 = \sigma_2$.
\end{lemma}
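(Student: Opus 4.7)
The plan is to exploit the monotonicity inherent in the flow rule inequality together with the fact that both solutions take values in the same admissible set $\EE(\Omega) \cap \KK(\Omega)$, which allows each solution to serve as a test function for the other.

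First I would fix a time $t \in (0,T)$ at which both flow rule inequalities hold. Inserting $\tau = \sigma_2(t)$ into the inequality for $\sigma_1$ and $\tau = \sigma_1(t)$ into the inequality for $\sigma_2$ and adding the two, the terms involving $\symnabla \boverdot{u_D}(t)$ cancel out and I obtain
\begin{equation*}
\scalarproduct{\mathbb{A}\bigl(\boverdot{\sigma_1}(t) - \boverdot{\sigma_2}(t)\bigr)}{\sigma_1(t) - \sigma_2(t)}{\Lt^2(\Omega)} \leq 0 \qquad \text{for a.a. } t \in (0,T).
\end{equation*}
This cancellation is the decisive structural feature: because both stress fields are admissible, no inhomogeneous $\symnabla \boverdot{u_D}$ contribution remains and no Gronwall-type argument is needed.

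Next, I would use symmetry of $\mathbb{A}$ and the $H^1$-in-time regularity of the solutions to identify the left-hand side as $\tfrac{1}{2}\tfrac{d}{dt}\scalarproduct{\mathbb{A}(\sigma_1(t) - \sigma_2(t))}{\sigma_1(t) - \sigma_2(t)}{\Lt^2(\Omega)}$. Integrating this inequality over $[0,t]$ and exploiting the common initial condition $\sigma_1(0) = \sigma_2(0) = \sigma_0$ yields
\begin{equation*}
\scalarproduct{\mathbb{A}(\sigma_1(t) - \sigma_2(t))}{\sigma_1(t) - \sigma_2(t)}{\Lt^2(\Omega)} \leq 0 \qquad \text{for a.a. } t \in [0,T],
\end{equation*}
and the coercivity of $\mathbb{A}$ on $\Rnns$ then forces $\sigma_1(t) = \sigma_2(t)$ for a.a.\ $t$, hence $\sigma_1 = \sigma_2$ in $H^1(\Lt^2(\Omega))$.

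I do not anticipate any genuine obstacle here; the argument is the standard monotonicity/uniqueness scheme for evolutionary variational inequalities in an $\mathbb{A}$-weighted Hilbert space. The only two points requiring care are the legitimacy of testing each inequality with the other solution (which is ensured by $\sigma_i(t) \in \EE(\Omega) \cap \KK(\Omega)$ built into the definition of a reduced solution) and the chain rule for $t \mapsto \scalarproduct{\mathbb{A}(\sigma_1 - \sigma_2)(t)}{(\sigma_1-\sigma_2)(t)}{\Lt^2(\Omega)}$, which is standard for $H^1$-in-time functions and the symmetric operator $\mathbb{A}$.
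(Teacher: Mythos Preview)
Your proof is correct and follows exactly the approach indicated in the paper: test each flow rule inequality with the other solution, add, and integrate over time using the common initial condition and the coercivity of $\mathbb{A}$. You have simply spelled out in full the argument that the paper summarizes in one sentence (referring to \cite[Theorem~1]{johnson}).
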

\begin{proof}
	This can be easily seen as in \cite[Theorem 1]{johnson} by testing
	\cref{eq:flowRuleInequality} with $\sigma_1$ respectively $\sigma_2$,
	adding both equations and integrating over time.
\end{proof}

\begin{lemma}
\label{lem:derivativeNorm}
	Let $\sigma \in H^1(\Lt^2(\Omega))$ be a reduced solution of \cref{eq:stateEquation}.
	Then
	\begin{align*}
		\norm{\boverdot{\sigma}(t)}{\Lt^2(\Omega)_\mathbb{A}}^2
		= \scalarproduct{\symnabla\boverdot{u}_D(t)}{\boverdot{\sigma}(t)}{\Lt^2(\Omega)}
	\end{align*}
	holds for almost all $t \in [0,T]$.
\end{lemma}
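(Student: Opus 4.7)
The plan is to use $\sigma$ itself (shifted in time) as a test function in the flow rule inequality, producing a two-sided estimate on $\scalarproduct{\mathbb{A}\boverdot\sigma(t) - \symnabla\boverdot u_D(t)}{\boverdot\sigma(t)}{\Lt^2(\Omega)}$. Note that the desired identity is equivalent to
\[
    \scalarproduct{\mathbb{A}\boverdot\sigma(t) - \symnabla\boverdot u_D(t)}{\boverdot\sigma(t)}{\Lt^2(\Omega)} = 0,
\]
so both inequality directions will suffice.

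First, I would fix a point $t \in (0,T)$ in the set of full measure on which the flow rule inequality \cref{eq:flowRuleInequality} holds and which is simultaneously a Lebesgue point of $\boverdot\sigma \in L^2(\Lt^2(\Omega))$. Since $\sigma \in H^1(\Lt^2(\Omega)) \hookrightarrow C(\Lt^2(\Omega))$ and the set $\mathcal{E}(\Omega)\cap\mathcal{K}(\Omega)$ is closed and convex in $\Lt^2(\Omega)$, the a.e.\ condition $\sigma(s) \in \mathcal{E}(\Omega)\cap\mathcal{K}(\Omega)$ actually holds for every $s \in [0,T]$. Hence, for any $h \neq 0$ with $t+h \in [0,T]$, the function $\tau := \sigma(t+h)$ is an admissible test function in \cref{eq:flowRuleInequality} at time $t$, yielding
\[
    \scalarproduct{\mathbb{A}\boverdot\sigma(t) - \symnabla\boverdot u_D(t)}{\sigma(t+h) - \sigma(t)}{\Lt^2(\Omega)} \geq 0.
\]

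Next, I divide by $h$. Writing $\sigma(t+h) - \sigma(t) = \int_t^{t+h}\boverdot\sigma(s)\,ds$ and invoking the Lebesgue differentiation theorem in Bochner spaces, the difference quotient $\tfrac{1}{h}(\sigma(t+h)-\sigma(t))$ converges to $\boverdot\sigma(t)$ in $\Lt^2(\Omega)$ as $h \to 0$. Letting first $h \downarrow 0$ preserves the inequality, giving
\[
    \scalarproduct{\mathbb{A}\boverdot\sigma(t) - \symnabla\boverdot u_D(t)}{\boverdot\sigma(t)}{\Lt^2(\Omega)} \geq 0,
\]
while letting $h \uparrow 0$ reverses it, so that the same expression is also $\leq 0$. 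Combining the two, the inner product vanishes, and rewriting $\scalarproduct{\mathbb{A}\boverdot\sigma(t)}{\boverdot\sigma(t)}{\Lt^2(\Omega)} = \norm{\boverdot\sigma(t)}{\Lt^2(\Omega)_\mathbb{A}}^2$ yields the claim.

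There is no serious obstacle: the only point that needs a small verification is the admissibility of $\sigma(t+h)$ as a test function, which follows from the continuous-in-time representative of $\sigma$ together with the closedness of $\mathcal{E}(\Omega)\cap\mathcal{K}(\Omega)$. Everything else is a standard difference-quotient argument at Lebesgue points.
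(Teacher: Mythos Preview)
Your proof is correct and follows essentially the same approach as the paper: both test the flow rule inequality at time $t$ with $\tau = \sigma(t\pm h)$, divide by $h$, and pass to the limit to obtain the two-sided estimate. Your version is in fact slightly more explicit, spelling out why $\sigma(t+h)$ is admissible (continuity of $\sigma$ together with closedness of $\mathcal{E}(\Omega)\cap\mathcal{K}(\Omega)$) and invoking the Lebesgue differentiation theorem at Lebesgue points of $\boverdot\sigma$, whereas the paper simply cites a reference for the a.e.\ differentiability of the difference quotient.
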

\begin{proof}
	There exists a set $N \subset [0,T]$ with measure zero, such that
	\begin{align*}
		\lim_{h \rightarrow 0} \frac{\sigma(t + h) - \sigma(h)}{h} = \boverdot{\sigma}(t) \mediumspace
		\text{and} \mediumspace
		\scalarproduct{\mathbb{A} \boverdot{\sigma}(t) - \symnabla \boverdot{u}_D(t)}{\tau - \sigma(t))}{\Lt^2(\Omega)}
		\geq 0
	\end{align*}
	for all $t \in [0,T] \setminus N$ and all
	$\tau \in \mathcal{K}(\Omega) \cap \mathcal{E}(\Omega)$
	(for the first property we refer to \cite[Theorem 3.1.40]{wachsmuth}).
	Testing this inequality with $\sigma(t \pm h)$ for a fixed $t \in (0,T) \setminus N$
	and a sufficient small $h$, dividing by $h$ and letting $h \rightarrow 0$,
	we obtain the desired equation.
\end{proof}

Since the conditions in $\KK(\Omega)$ and $\EE(\Omega)$ are pointwise in time and independent 
of the time, one immediately deduces the following

\begin{lemma}[Time dependent flow rule inequality]
\label{lem:flowRuleTimeInequality}
	Let $\sigma \in H^1(\Lt^2(\Omega))$. Then 
	\begin{align}
	\label{eq:flowRuleTimeInequality}
	\begin{split}
		&\scalarproduct{\mathbb{A} \boverdot{\sigma}
		- \symnabla \boverdot{u}_D}{\tau - \sigma}{L^2(\Lt^2(\Omega))} \geq 0 \\
		&\largespace \forall \tau \in L^2(\Lt^2(\Omega)) \text{ with }
		\tau(t) \in \mathcal{E}(\Omega) \cap \mathcal{K}(\Omega)
		\text{ f.a.a. } t \in [0,T]
	\end{split}
	\end{align}
	holds if and only if \cref{eq:flowRuleInequality} holds.
\end{lemma}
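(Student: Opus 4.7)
The plan is to prove the two implications separately: the backward implication \cref{eq:flowRuleInequality} $\Rightarrow$ \cref{eq:flowRuleTimeInequality} is routine, amounting to picking any $\tau \in L^2(\Lt^2(\Omega))$ with $\tau(t) \in \EE(\Omega) \cap \KK(\Omega)$ for a.e.\ $t$, applying \cref{eq:flowRuleInequality} pointwise with the test element $\tau(t)$, and integrating the resulting nonnegative integrand over $[0,T]$ (measurability of the integrand is inherited from $\tau$, $\boverdot\sigma$, and $\boverdot u_D$). The forward implication, by contrast, requires a localization-in-time argument combined with separability.

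For the forward direction \cref{eq:flowRuleTimeInequality} $\Rightarrow$ \cref{eq:flowRuleInequality}, my strategy exploits that the constraint $\EE(\Omega) \cap \KK(\Omega)$ is imposed pointwise in time. Fixing any $\tau_0 \in \EE(\Omega) \cap \KK(\Omega)$ and any measurable $E \subset [0,T]$, I would introduce the cut-off test function
\[
\tau(t) := \chi_E(t)\, \tau_0 + \chi_{E^c}(t)\, \sigma(t),
\]
which is admissible in \cref{eq:flowRuleTimeInequality} once $\sigma(t) \in \EE(\Omega) \cap \KK(\Omega)$ a.e.\ is known (the standing context, since the lemma is applied to candidate reduced solutions). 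Substituting this choice into \cref{eq:flowRuleTimeInequality} and using that the integrand vanishes on $E^c$ because $\tau - \sigma = 0$ there, I would arrive at
\[
\int_E \scalarproduct{\mathbb{A}\boverdot\sigma(t) - \symnabla\boverdot u_D(t)}{\tau_0 - \sigma(t)}{\Lt^2(\Omega)}\, dt \geq 0,
\]
and the arbitrariness of $E$ gives the pointwise inequality for this fixed $\tau_0$ off a $\tau_0$-dependent null set.

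To obtain a single full-measure set on which \cref{eq:flowRuleInequality} holds for every $\tau \in \EE(\Omega) \cap \KK(\Omega)$ simultaneously, I would use separability of $\Lt^2(\Omega)$ (hence of the closed convex subset $\EE(\Omega) \cap \KK(\Omega)$): pick a countable dense sequence $\{\tau_k\}_{k \in \N}$ therein, apply the preceding step to each $\tau_k$, and intersect the countably many exceptional null sets. Since $\tau \mapsto \scalarproduct{\mathbb{A}\boverdot\sigma(t) - \symnabla\boverdot u_D(t)}{\tau - \sigma(t)}{\Lt^2(\Omega)}$ is affine and continuous in $\tau$, the inequality then extends by density from $\{\tau_k\}$ to all of $\EE(\Omega) \cap \KK(\Omega)$. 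The main obstacle is the admissibility of the cut-off test function: the construction hinges on being allowed to set $\tau(t) = \sigma(t)$ on $E^c$, which in turn rests on the membership $\sigma(t) \in \EE(\Omega) \cap \KK(\Omega)$ a.e.\ that is part of the notion of a reduced solution.
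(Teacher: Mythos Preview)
Your argument is correct and matches the spirit of the paper's approach, which is simply the one-line remark preceding the lemma: ``Since the conditions in $\KK(\Omega)$ and $\EE(\Omega)$ are pointwise in time and independent of the time, one immediately deduces the following.'' The paper provides no further details, so your localization via characteristic functions and passage to a single null set by separability is precisely the standard argument the paper has in mind.

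Your closing remark is more than a technicality: the forward implication genuinely \emph{requires} $\sigma(t)\in\EE(\Omega)\cap\KK(\Omega)$ a.e.\ in order to make the cut-off test function $\tau = \chi_E\tau_0 + \chi_{E^c}\sigma$ admissible, and without it the equivalence can fail (take a one-dimensional example with $C=\{0\}$, $\sigma\equiv 1$, and $g(t)=t-\tfrac12$ on $[0,1]$: the integrated inequality holds with equality, but the pointwise one fails on $(\tfrac12,1]$). The lemma as literally stated in the paper omits this hypothesis, but you are right that in every application within the paper---Propositions~3.10 and Theorem~3.18---the membership $\sigma(t)\in\EE(\Omega)\cap\KK(\Omega)$ has already been established before the lemma is invoked. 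So your proof is complete for the result as it is actually used.
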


We end this section with a continuity result for reduced solutions (supposed they exists, which 
will be shown in the next section by means of regularization). 
For this purpose, we need two auxiliary results.

\begin{lemma}
\label{sigmaSquareInequality}
	Let $\sequence{a}{n} \subset \mathbb{R}$ and
	$\sequence{\tau}{n} \subset H^1(\Lt^2(\Omega))$ such that $\tau_n(0) = \sigma_0$
	for all $n \in \mathbb{N}$ and $a_n \rightarrow a$ in $\mathbb{R}$
	and $\tau_n \rightharpoonup \tau$ in $H^1(\Lt^2(\Omega))$. 
	Moreover, assume that $a_n \leq -\scalarproduct{\mathbb{A} \boverdot{\tau}_n}{\tau_n}{L^2(\Lt^2(\Omega))}$	for all $n \in \mathbb{N}$. 
	Then $a \leq -\scalarproduct{\mathbb{A} \boverdot{\tau}}{\tau}{L^2(\Lt^2(\Omega))}$ holds.
\end{lemma}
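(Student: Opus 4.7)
The plan is to convert the bilinear expression $\scalarproduct{\mathbb{A} \boverdot{\tau}_n}{\tau_n}{L^2(\Lt^2(\Omega))}$ into a boundary term via the fundamental theorem of calculus, so that the remaining weak-limit argument only needs to see $\tau_n(T)$ rather than $\boverdot{\tau}_n$. Since $\mathbb{A}$ is symmetric and coercive on $\Rnns$, for any $\tau_n \in H^1(\Lt^2(\Omega))$ the chain rule applied to $t\mapsto \tfrac{1}{2}\norm{\tau_n(t)}{\Lt^2(\Omega)_\mathbb{A}}^2$ gives
\begin{equation*}
    \scalarproduct{\mathbb{A} \boverdot{\tau}_n}{\tau_n}{L^2(\Lt^2(\Omega))}
    = \tfrac{1}{2}\norm{\tau_n(T)}{\Lt^2(\Omega)_\mathbb{A}}^2 - \tfrac{1}{2}\norm{\sigma_0}{\Lt^2(\Omega)_\mathbb{A}}^2,
\end{equation*}
where we have used $\tau_n(0) = \sigma_0$. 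The assumption on $a_n$ therefore reads
$a_n \leq \tfrac{1}{2}\norm{\sigma_0}{\Lt^2(\Omega)_\mathbb{A}}^2 - \tfrac{1}{2}\norm{\tau_n(T)}{\Lt^2(\Omega)_\mathbb{A}}^2$.

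Next I would invoke the continuous embedding $H^1(\Lt^2(\Omega)) \embed C(\Lt^2(\Omega))$ and the fact that the evaluation functional $v \mapsto v(T)$ is linear and continuous from $C(\Lt^2(\Omega))$ into $\Lt^2(\Omega)$. Linear continuous maps preserve weak convergence, so $\tau_n \weak \tau$ in $H^1(\Lt^2(\Omega))$ forces $\tau_n(T) \weak \tau(T)$ in $\Lt^2(\Omega)$. The norm induced by $\mathbb{A}$ is equivalent to the standard one, hence it is weakly lower semicontinuous on $\Lt^2(\Omega)$, yielding
$\norm{\tau(T)}{\Lt^2(\Omega)_\mathbb{A}}^2 \leq \liminf_{n\to\infty}\norm{\tau_n(T)}{\Lt^2(\Omega)_\mathbb{A}}^2$.

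Combining these ingredients and passing to the limit in the inequality for $a_n$ (using $a_n \to a$ and $\limsup(-x_n) = -\liminf x_n$),
\begin{equation*}
    a \leq \tfrac{1}{2}\norm{\sigma_0}{\Lt^2(\Omega)_\mathbb{A}}^2
        - \tfrac{1}{2}\norm{\tau(T)}{\Lt^2(\Omega)_\mathbb{A}}^2
        = -\scalarproduct{\mathbb{A} \boverdot{\tau}}{\tau}{L^2(\Lt^2(\Omega))},
\end{equation*}
where in the last step we apply the same chain rule identity to the limit $\tau$, noting that $\tau(0) = \sigma_0$ follows from $\tau_n(0) = \sigma_0$ together with the weak continuity of the evaluation at $t=0$.

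There is no serious obstacle here; the whole argument rests on the observation that $\scalarproduct{\mathbb{A}\boverdot\tau_n}{\tau_n}{L^2(\Lt^2(\Omega))}$ is, up to constants, a squared norm at the endpoint $T$, which is weakly lower semicontinuous. The only mild care point is verifying that the endpoint evaluation is continuous from $H^1(\Lt^2(\Omega))$ into $\Lt^2(\Omega)$ (so that weak convergence is transported), and that the chain-rule identity is valid for arbitrary $H^1(\Lt^2(\Omega))$-functions — both are standard facts about Bochner--Sobolev spaces into a Hilbert space.
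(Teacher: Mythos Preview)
Your argument is correct and follows essentially the same approach as the paper: both rewrite $\scalarproduct{\mathbb{A}\boverdot{\tau}_n}{\tau_n}{L^2(\Lt^2(\Omega))}$ as $\tfrac12\norm{\tau_n(T)}{\Lt^2(\Omega)_\mathbb{A}}^2 - \tfrac12\norm{\sigma_0}{\Lt^2(\Omega)_\mathbb{A}}^2$, use the embedding $H^1(\Lt^2(\Omega))\hookrightarrow C(\Lt^2(\Omega))$ to get $\tau_n(T)\rightharpoonup\tau(T)$, and then apply weak lower semicontinuity of the $\mathbb{A}$-norm. Your version is slightly more explicit in justifying $\tau(0)=\sigma_0$ for the limit, which the paper uses tacitly.
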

\begin{proof}
	Using the lower weakly semicontinuity of $\norm{\cdot}{\Lt^2(\Omega)_\mathbb{A}}$ and
	the linear and continuous embedding $H^1(\Lt^2(\Omega)) \hookrightarrow C(\Lt^2(\Omega))$, we deduce
	\begin{align*}
		\liminf_{n \rightarrow \infty}
		\scalarproduct{\mathbb{A} \boverdot{\tau}_n}{\tau_n}{L^2(\Lt^2(\Omega))}
		&= \frac{1}{2} \liminf_{n \rightarrow \infty} \norm{\tau_n(T)}{\Lt^2(\Omega)_\mathbb{A}}^2
		- \frac{1}{2} \norm{\sigma_0}{\Lt^2(\Omega)_\mathbb{A}}^2 \\
		&\geq \frac{1}{2} \norm{\tau(T)}{\Lt^2(\Omega)_\mathbb{A}}^2
		- \frac{1}{2} \norm{\sigma_0}{\Lt^2(\Omega)_\mathbb{A}}^2
		= \scalarproduct{\mathbb{A} \boverdot{\tau}}{\tau}{L^2(\Lt^2(\Omega))},
	\end{align*}
    which immediately gives the claim.
\end{proof}

\begin{lemma}
\label{lem:partsConvergence}
	Let $H$ be a Hilbert space, $v, \tau \in H^1(H)$ and
	$\sequence{v}{n}, \sequence{\tau}{n} \subset H^1(H)$ such that
	$\tau_n \rightharpoonup \tau$ in $H^1(H)$, $\tau_n(0) \rightarrow \tau(0)$,
	$v_n \rightarrow v$ in $L^2(H)$, $v_n(0) \rightharpoonup v(0)$
	and $v_n(T) \rightarrow v(T)$ in $H$. Then
    $\scalarproduct{\boverdot{v}_n}{\tau_n}{L^2(H)} \rightarrow
    \scalarproduct{\boverdot{v}}{\tau}{L^2(H)}$ holds true.
\end{lemma}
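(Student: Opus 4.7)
The plan is to move the time derivative off of $v_n$ by integration by parts in the Bochner–Sobolev sense, to pass to the limit termwise, and to undo the integration by parts at the end. Since both $v_n$ and $\tau_n$ lie in $H^1(0,T;H)$, they admit continuous representatives in $C([0,T];H)$, and the standard integration by parts formula
\begin{equation*}
\scalarproduct{\boverdot{v}_n}{\tau_n}{L^2(H)}
= \scalarproduct{v_n(T)}{\tau_n(T)}{H}
- \scalarproduct{v_n(0)}{\tau_n(0)}{H}
- \scalarproduct{v_n}{\boverdot{\tau}_n}{L^2(H)}
\end{equation*}
holds, and analogously for $v$ and $\tau$.

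Next I would pass to the limit on the right-hand side. For the interior term, strong convergence $v_n \to v$ in $L^2(H)$ combined with weak convergence $\boverdot{\tau}_n \rightharpoonup \boverdot{\tau}$ in $L^2(H)$ (which is built into $\tau_n \rightharpoonup \tau$ in $H^1(H)$) yields $\scalarproduct{v_n}{\boverdot{\tau}_n}{L^2(H)} \to \scalarproduct{v}{\boverdot{\tau}}{L^2(H)}$. For the boundary term at $t=T$, the evaluation map $H^1(H) \ni \tau \mapsto \tau(T) \in H$ is linear and continuous, so $\tau_n(T) \rightharpoonup \tau(T)$ in $H$; coupled with the strong convergence $v_n(T) \to v(T)$ this gives $\scalarproduct{v_n(T)}{\tau_n(T)}{H} \to \scalarproduct{v(T)}{\tau(T)}{H}$. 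For the boundary term at $t=0$ the roles are reversed: $\tau_n(0) \to \tau(0)$ strongly in $H$ by assumption, while $v_n(0) \rightharpoonup v(0)$ weakly, which again produces convergence of the inner product.

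Adding the three convergences and rewriting the limit via integration by parts for $v$ and $\tau$ yields exactly $\scalarproduct{\boverdot{v}}{\tau}{L^2(H)}$, which proves the claim. I do not expect a genuine obstacle: everything reduces to the asymmetric duality principle that the product of a strongly and a weakly convergent sequence in a Hilbert space converges, applied once in $L^2(H)$ and twice in $H$. The only point that deserves care is making sure that the assumptions are deployed with the right strong/weak roles on each of the three terms, which is precisely how the hypotheses $v_n(0)\rightharpoonup v(0)$, $v_n(T)\to v(T)$, and $\tau_n(0)\to \tau(0)$ are tailored.
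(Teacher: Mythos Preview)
Your proof is correct and follows essentially the same route as the paper: integrate by parts, pass to the limit in each of the three terms using the strong/weak pairing, and integrate by parts back. The paper's proof is just a terser version of yours, invoking the continuous embedding $H^1(H)\hookrightarrow C(H)$ to get $\tau_n(t)\rightharpoonup\tau(t)$ for $t\in\{0,T\}$ exactly as you do.
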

\begin{proof}
	This follows immediately from integration by parts:
	\begin{align*}
		\scalarproduct{\boverdot{v}_n}{\tau_n}{L^2(H)}
		&= -\scalarproduct{v_n}{\boverdot{\tau}_n}{L^2(H)}
		+ \scalarproduct{v_n(T)}{\tau_n(T)}{H} - \scalarproduct{v_n(0)}{\tau_n(0)}{H} \\
		&\rightarrow -\scalarproduct{v}{\boverdot{\tau}}{L^2(H)}
		+ \scalarproduct{v(T)}{\tau(T)}{H} - \scalarproduct{v(0)}{\tau(0)}{H}
		= \scalarproduct{\boverdot{v}}{\tau}{L^2(H)},
	\end{align*}
	where we used the linear and continuous embedding
	$H^1(H) \hookrightarrow C(H)$
	to see that $\tau_n(t) \rightharpoonup \tau(t)$ in $H$ for $t \in \{ 0,T \}$.
\end{proof}

\begin{proposition}[Continuity properties of reduced solutions]
\label{prop:continuityPropertiesOfStronglyReducedSolutions}
	Let us assume that $\sigma_n \in H^1(\Lt^2(\Omega))$ is the reduced solution of
	\cref{eq:stateEquation} with respect to $u_{D,n}$ for every $n \in \mathbb{N}$.
	Then there exists a reduced solution $\sigma \in H^1(\Lt^2(\Omega))$ of
	\cref{eq:stateEquation} with respect to $u_{D}$ and $\sigma_n \rightharpoonup \sigma$
	in $H^1(\Lt^2(\Omega))$. Moreover, if $u_{D,n} \rightarrow u_D$ in $H^1(\bH^1(\Omega))$, then $\sigma_n \rightarrow \sigma$
	in $H^1(\Lt^2(\Omega))$.
\end{proposition}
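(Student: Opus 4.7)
The plan follows a standard compactness scheme: derive uniform a priori bounds on $\{\sigma_n\}$, extract a weak limit, identify it as the unique reduced solution for $u_D$ via Lemma \ref{lem:uniquenessOfAStronglyReducedSolution}, and finally upgrade to strong convergence under the stronger hypothesis.

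\emph{Step 1 (a priori bounds and weak limit).} Applying Lemma \ref{lem:derivativeNorm} pointwise in $t$, integrating over $[0,T]$, and invoking Cauchy--Schwarz together with the coercivity of $\mathbb{A}$ gives $\|\boverdot{\sigma}_n\|_{L^2(\Lt^2(\Omega))} \le C\,\|\symnabla \boverdot{u}_{D,n}\|_{L^2(\Lt^2(\Omega))}$. Since $\{u_{D,n}\}$ is weakly convergent in $H^1(\bH^1(\Omega))$ it is bounded, and together with $\sigma_n(0)=\sigma_0$ and the embedding $H^1(\Lt^2)\hookrightarrow C(\Lt^2)$ this yields a uniform bound in $H^1(\Lt^2(\Omega))$. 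Extract a (not relabeled) subsequence with $\sigma_n \weak \sigma$. Weak closedness of the convex set $\{\tau\in L^2(\Lt^2(\Omega)):\tau(t)\in\EE(\Omega)\cap\KK(\Omega)\text{ a.e.}\}$ together with the embedding $H^1(\Lt^2)\hookrightarrow C(\Lt^2)$ imply that $\sigma$ inherits the pointwise constraint and that $\sigma(0)=\sigma_0$.

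\emph{Step 2 (flow rule in the limit).} Using Lemma \ref{lem:flowRuleTimeInequality}, I rewrite the flow rule inequality for $\sigma_n$ as
\[
    \scalarproduct{\mathbb{A}\boverdot{\sigma}_n}{\tau}{L^2(\Lt^2(\Omega))} - \scalarproduct{\symnabla \boverdot{u}_{D,n}}{\tau-\sigma_n}{L^2(\Lt^2(\Omega))} \;\ge\; \scalarproduct{\mathbb{A}\boverdot{\sigma}_n}{\sigma_n}{L^2(\Lt^2(\Omega))}
\]
for every admissible $\tau$. The terms linear in $\sigma_n$, namely $\scalarproduct{\mathbb{A}\boverdot{\sigma}_n}{\tau}{L^2}$ and $\scalarproduct{\symnabla \boverdot{u}_{D,n}}{\tau}{L^2}$, pass to the limit by weak convergence of the respective first factors. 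The mixed term $\scalarproduct{\symnabla \boverdot{u}_{D,n}}{\sigma_n}{L^2}$ is a product of two only-weakly-convergent sequences and is handled by Lemma \ref{lem:partsConvergence} with $H=\Lt^2(\Omega)$, $v_n=\symnabla u_{D,n}$, $\tau_n=\sigma_n$; its hypotheses follow from Assumption \ref{assu:standingAssumptionsForDefinitionAndExistence}(ii) (strong convergence in $L^2(\bH^1)$ and at $t=T$, weak convergence at $t=0$ via $H^1(\bH^1)\hookrightarrow C(\bH^1)$) together with $\sigma_n(0)=\sigma_0=\sigma(0)$. Consequently, the left-hand side converges to a real number $a$, and Lemma \ref{sigmaSquareInequality}, applied with $\tau_n=\sigma_n$ and $a_n$ equal to this left-hand side, yields $a\le -\scalarproduct{\mathbb{A}\boverdot{\sigma}}{\sigma}{L^2}$, which is precisely the flow rule inequality for $\sigma$. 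Uniqueness (Lemma \ref{lem:uniquenessOfAStronglyReducedSolution}) then forces the whole sequence, and not only the subsequence, to converge weakly to $\sigma$.

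\emph{Step 3 (strong convergence).} Under the stronger assumption $u_{D,n}\to u_D$ in $H^1(\bH^1(\Omega))$, integrating Lemma \ref{lem:derivativeNorm} in time gives $\|\boverdot{\sigma}_n\|_{L^2(\Lt^2(\Omega)_\mathbb{A})}^2 = \scalarproduct{\symnabla \boverdot{u}_{D,n}}{\boverdot{\sigma}_n}{L^2(\Lt^2(\Omega))}$. The right-hand side now pairs a strongly convergent with a weakly convergent sequence in $L^2(\Lt^2(\Omega))$ and therefore converges to $\|\boverdot{\sigma}\|_{L^2(\Lt^2(\Omega)_\mathbb{A})}^2$. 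Norm convergence together with weak convergence in the Hilbert space $L^2(\Lt^2(\Omega)_\mathbb{A})$ upgrades to strong convergence of $\boverdot{\sigma}_n$, and $\sigma_n(0)=\sigma_0$ then gives $\sigma_n\to\sigma$ in $H^1(\Lt^2(\Omega))$. The decisive and only non-routine step is Step 2: the two products of weakly converging sequences, for which Lemma \ref{sigmaSquareInequality} provides the one-sided semicontinuity coming from the energy identity, while Lemma \ref{lem:partsConvergence} supplies the integration-by-parts identity that links the Dirichlet data with the stress through the endpoint information of Assumption \ref{assu:standingAssumptionsForDefinitionAndExistence}(ii).
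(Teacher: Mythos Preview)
Your proof is correct and follows essentially the same approach as the paper's: bound via Lemma~\ref{lem:derivativeNorm}, extract a weak limit, pass to the limit in the time-integrated flow rule by combining Lemma~\ref{lem:partsConvergence} for the mixed term and Lemma~\ref{sigmaSquareInequality} for the quadratic term, and upgrade to strong convergence via norm convergence. One cosmetic slip: with $a_n$ equal to your displayed left-hand side you have $a_n \ge \scalarproduct{\mathbb{A}\boverdot{\sigma}_n}{\sigma_n}{L^2}$, so Lemma~\ref{sigmaSquareInequality} must be applied to $-a_n$ and yields $a \ge \scalarproduct{\mathbb{A}\boverdot{\sigma}}{\sigma}{L^2}$ (not $a \le -\scalarproduct{\mathbb{A}\boverdot{\sigma}}{\sigma}{L^2}$), which is exactly what the flow rule requires.
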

\begin{proof}
	According to \cref{lem:derivativeNorm} (and $\sigma_n(0) = \sigma_0$),
	$\sigma_n$ is bounded in $H^1(\Lt^2(\Omega))$, hence, there exists a subsequence,
	again denoted by $\sigma_n$, and a weak limit
	$\sigma$ such that $\sigma_n \rightharpoonup \sigma$ in $H^1(\Lt^2(\Omega))$.
	Thanks to the linear and continuous embedding $H^1(\Lt^2(\Omega)) \hookrightarrow C(\Lt^2(\Omega))$,
	we have $\sigma_n(t) \rightharpoonup \sigma(t)$ in $\Lt^2(\Omega)$ for all $t \in [0,T]$, therefore,
	since $\mathcal{E}(\Omega)$ and $\mathcal{K}(\Omega)$ are weakly closed,
	$\sigma(t) \in \mathcal{E}(\Omega) \cap \mathcal{K}(\Omega)$ for all $t \in [0,T]$
	and $\sigma(0) = \sigma_0$.
	
	In order to prove that $\sigma$ fulfills the flow rule inequality, we use
	\cref{lem:flowRuleTimeInequality}. To this end we choose an arbitrary
	$\tau \in L^2(\Lt^2(\Omega))$ with $\tau(t) \in \mathcal{E}(\Omega) \cap \mathcal{K}(\Omega)$
	for almost all $t \in [0,T]$. Defining
	\begin{align*}
		a_n := \scalarproduct{\symnabla \boverdot{u}_{D,n}}{\sigma_n}{L^2(\Lt^2(\Omega))}
		+ \scalarproduct{\symnabla \boverdot{u}_{D,n} - \mathbb{A}\boverdot{\sigma}_n}
		{\tau}{L^2(\Lt^2(\Omega))}
	\end{align*}
	we see that $a_n \leq - \scalarproduct{\mathbb{A}\boverdot{\sigma}_n}{\sigma_n}{L^2(\Lt^2(\Omega))}$
	holds for all $n \in \mathbb{N}$. Thus, using \cref{lem:partsConvergence}
	to see that $\scalarproduct{\symnabla \boverdot{u}_{D,n}}{\sigma_n}{L^2(\Lt^2(\Omega))}
	\rightarrow \scalarproduct{\symnabla \boverdot{u}_{D}}{\sigma}{L^2(\Lt^2(\Omega))}$
	(here we need in particular $u_{D,n}(T) \rightarrow u_{D}(T)$),
	\cref{sigmaSquareInequality} implies that \cref{eq:flowRuleTimeInequality} holds.
	Thanks to \cref{lem:uniquenessOfAStronglyReducedSolution} we obtain the
	convergence $\sigma_n \rightharpoonup \sigma$ in $H^1(\Lt^2(\Omega))$
	for the whole sequence by standard arguments.
	
	If $u_{D, n} \rightarrow u_D$ in $H^1(\bH^1(\Omega))$, then we obtain
	$\norm{\boverdot{\sigma}_n}{L^2(\Lt^2(\Omega)_\mathbb{A})} \rightarrow
	\norm{\boverdot{\sigma}}{L^2(\Lt^2(\Omega)_\mathbb{A})}$
	from \cref{lem:derivativeNorm},
	which gives the strong convergence.
\end{proof}

\begin{remark}
	It is also possible to consider perturbations in the initial condition, that is,
	$\sigma_n$ in \cref{prop:continuityPropertiesOfStronglyReducedSolutions} is
	a reduced solution of \cref{eq:stateEquation} with respect to the initial
	condition $\sigma_{0,n}$ (and the Dirichlet displacement $u_{D,n}$),
	where $\{ \sigma_{0,n} \}_{n \in \mathbb{N}} \subset
	\EE(\Omega) \cap \mathcal{K}(\Omega)$
	is a sequence such that $\sigma_{0,n} \rightarrow \sigma_0$ in $\Lt^2(\Omega)$.
	In this case \cref{sigmaSquareInequality} can be proven analogously and the
	proof of \cref{prop:continuityPropertiesOfStronglyReducedSolutions} does not change.
\end{remark}

\subsection{Regularization and Existence}
\label{subsec:3.2}

In this section, we establish the existence of a reduced solution by means of regularization.
We underline that similar results have already been obtained in the literature, see e.g.~\cite[1.4 Formulations. R\'{e}sultats,
\textit{Probl\`{e}me quasi statique en plasticit\'e parfaite}]{suquet}.
However, since we slightly extend these results (as explained in \cref{rem:suquet} below), we present the full proofs for the convenience of the reader.

We consider the following regularized version of the state equation \cref{eq:stateEquation}:
\begin{subequations}\label{eq:regularizedStateEquation}
\begin{alignat}{2}
	- \div \sigma_n &= 0 \largespace \largespace &&\text{ in } \Omega, \\
	\sigma_n &= \mathbb{C} (\symnabla u_n
	- z_n) \largespace \largespace &&\text{ in } \Omega, \label{eq:regularizedStateEquationb} \\
	\boverdot{z}_n &\in \partial I_{n}(\sigma_n
	- \varepsilon_n \mathbb{B}z_n) \largespace \largespace &&\text{ in } \Omega, \label{eq:} \\
	u_n &= u_{D,n} \largespace \largespace  &&\text{ on } \Gamma_D, \\
	\sigma_n\nu &= 0 \largespace \largespace  &&\text{ on } \Gamma_N, \\
	u_n(0) &= u_{D,n}(0) \mediumspace \sigma_n(0) = \sigma_0 \largespace &&\text{ in } \Omega,
\end{alignat}
\end{subequations}
where the sequence
$\{ (\varepsilon_n, \lambda_n) \}_{n \in \mathbb{N}} \subset \mathbb{R}^2 \setminus \{ 0 \}$
fulfills $\varepsilon_n, \lambda_n \geq 0$, $(\varepsilon_n, \lambda_n) \rightarrow 0$ and
\begin{align}
\label{eq:regularization_initial_condition}
	(\sigma_0 - \varepsilon_n \mathbb{B}(\symnabla u_{D,n}(0)
	- \mathbb{A}\sigma_0))
	\in \mathcal{K}(\Omega),
\end{align}
whenever $\lambda_n = 0$. We emphasize that the following settings are possible
\begin{align*}
	\lambda_n &> 0, \mediumspace \varepsilon_n = 0 \largespace \text{(vanishing viscosity)}, \\
	\lambda_n &= 0, \mediumspace \varepsilon_n > 0 \largespace \text{(vanishing hardening)}, \\
	\lambda_n &> 0, \mediumspace \varepsilon_n > 0 \largespace
	\text{(mixed vanishing viscosity and hardening)}.
\end{align*}

Let us recall that $I_n = I_{\lambda_n}$ and $I_n = I_0 = I_{\KK(\Omega)}$
when $\lambda_n = 0$. When $\lambda_n > 0$
the inclusion $a \in \partial I_{n}(b)$ is simply an equation, $a = \partial I_{n}(b)$, for $a,b \in \Lt^2(\Omega)$.
In \cref{sec:optimalityConditions} below, we aim to apply the results of \cite[section 5]{paperAbstract} to derive first-order
optimality conditions. For this purpose, because of differentiability reasons,
a norm gap is needed and therefore, we define solutions to \cref{eq:regularizedStateEquation} 
in $L^p$-type spaces (although, in this section, we only need $p = 2$). 
The following result of \cite{herzog} serves as a basis therefor:

\begin{lemma}
\label{lem:w1sExistence}
	There exists $\overline{p} > 2$, such that for all $p \in [\overline{p}',\overline{p}]$,
	$\ell \in \bW^{-1,p}_D(\Omega)$ and $u_D \in \bW^{1,p}(\Omega)$,
	there exists a unique $u \in \bW^{1,p}(\Omega)$ of the following \emph{linear elasticity equation}:
	\begin{equation*}
		\scalarproduct{\mathbb{C} \symnabla u}{\symnabla \zeta}{\Lt^2(\Omega)} = \langle \ell, \zeta \rangle
		\quad \forall \zeta \in \bW^{1,p'}_D(\Omega), \qquad
		u - u_D \in \bW^{1,p}_D(\Omega).
	\end{equation*}
	We define the associated solution operator
	\begin{equation}\label{eq:defT}
		\mathcal{T} : \bW^{-1,p}_D(\Omega) \times \bW^{1,p}(\Omega) \rightarrow \bW^{1,p}(\Omega),
		\largespace (\ell, u_D) \mapsto u,	
	\end{equation}
	which we denote by the same symbol for different values of $p$.
	For every $p \in [\overline{p}',\overline{p}]$, it is linear and continuous.
\end{lemma}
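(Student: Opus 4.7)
The plan is to reduce the inhomogeneous Dirichlet problem to one with homogeneous data and then invoke the Gröger-type regularity result for mixed-boundary-value problems of linear elasticity referenced via \cite{herzog}. Concretely, for a candidate solution $u \in \bW^{1,p}(\Omega)$, set $v := u - u_D \in \bW^{1,p}_D(\Omega)$; then $u$ solves the linear elasticity equation if and only if $v \in \bW^{1,p}_D(\Omega)$ satisfies
\begin{equation*}
    \scalarproduct{\mathbb{C}\symnabla v}{\symnabla \zeta}{\Lt^2(\Omega)}
    = \langle \ell, \zeta\rangle - \scalarproduct{\mathbb{C}\symnabla u_D}{\symnabla \zeta}{\Lt^2(\Omega)}
    \quad \forall\,\zeta \in \bW^{1,p'}_D(\Omega).
\end{equation*}
Since $\symnabla \in \LL(\bW^{1,p}(\Omega),\Lt^p(\Omega))$ and $\mathbb{C}\in \LL(\Rnns)$, the right-hand side is an element of $\bW^{-1,p}_D(\Omega)$ whose norm is bounded by $\|\ell\|_{\bW^{-1,p}_D(\Omega)} + C\|u_D\|_{\bW^{1,p}(\Omega)}$. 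Hence it suffices to show that the operator
\begin{equation*}
    A_p := -\div\,\mathbb{C}\,\symnabla : \bW^{1,p}_D(\Omega) \longrightarrow \bW^{-1,p}_D(\Omega)
\end{equation*}
is a topological isomorphism for all $p$ in an interval around $2$.

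The case $p=2$ is classical: because $\Gamma_D$ has positive surface measure, Korn's second inequality on $\bH^1_D(\Omega)$ together with the coercivity of $\mathbb{C}$ yields that the bilinear form $(v,\zeta)\mapsto\scalarproduct{\mathbb{C}\symnabla v}{\symnabla\zeta}{\Lt^2(\Omega)}$ is bounded and coercive on $\bH^1_D(\Omega)$; the Lax--Milgram lemma then gives that $A_2$ is an isomorphism. For $p$ near $2$ the crucial input is the standing assumption that $\Omega\cup \Gamma_N$ is regular in the sense of Gröger. Under this geometric condition, the Gröger/Herzog--Meyer--Wachsmuth theory (see \cite{herzog}) implies that there exists $\overline{p}>2$ such that $A_p$ is an isomorphism for every $p\in[\overline{p}',\overline{p}]$, with norm of the inverse bounded uniformly on any compact subinterval.

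Given this, the proof assembles as follows. Fix $p\in[\overline{p}',\overline{p}]$; for $\ell\in\bW^{-1,p}_D(\Omega)$ and $u_D\in\bW^{1,p}(\Omega)$ one defines
\begin{equation*}
    v := A_p^{-1}\bigl(\ell + \div\,\mathbb{C}\,\symnabla u_D\bigr) \in \bW^{1,p}_D(\Omega),
    \qquad u := v + u_D \in \bW^{1,p}(\Omega),
\end{equation*}
which yields the unique solution; uniqueness also follows directly from injectivity of $A_p$. The operator $\mathcal{T}:(\ell,u_D)\mapsto u$ is linear by construction and continuous because $A_p^{-1}$, $\symnabla$ and $\mathbb{C}$ are all continuous on the indicated spaces.

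The only nontrivial ingredient is the $p$-independent Gröger regularity for the elasticity system; this is precisely the content borrowed from \cite{herzog}, so no proof obstacle arises within the present argument beyond verifying that our operator $A_p$ fits the framework of that reference. Once this citation is in place, everything else is a bookkeeping reduction via the lifting $u\mapsto u-u_D$ together with Lax--Milgram at $p=2$ as a sanity check that the isomorphism interval contains $p=2$.
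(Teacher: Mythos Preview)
Your approach is essentially the same as the paper's: reduce to homogeneous Dirichlet data and invoke \cite{herzog} for the isomorphism property of $A_p$. The only difference is that the paper's proof cites \cite[Theorem~1.1 and Remark~1.3]{herzog} only for $p\ge 2$ and then obtains the range $p\in[\overline{p}',2)$ by the duality argument $A_{p'}=A_p^*$ (using the symmetry of $\mathbb{C}$), whereas you attribute the full interval $[\overline{p}',\overline{p}]$ directly to \cite{herzog}; depending on what that reference actually states, you may need to spell out the duality step for $p<2$ explicitly.
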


\begin{proof}
    For the case $p \geq 2$, the claim is a direct consequence \cite[Theorem~1.1 and Remark~1.3]{herzog}.
    The case $p < 2$ then follows by duality.
\end{proof}

Given the integrability exponent $\overline{p}$, our definition of a solution to \cref{eq:reducedStateEquation} reads as follows:

\begin{definition}
\label{def:regularizedSolution}
	Let $n \in \mathbb{N}$ and $p \in [2,\overline{p}]$, where
	$\overline{p}$ is from \cref{lem:w1sExistence}, when $\lambda_n > 0$
	and $p = 2$ when $\lambda_n = 0$. Moreover, assume that $u_{D,n} \in H^1(\bW^{1,p}(\Omega))$. Then a tuple
	$(u_n, \sigma_n, z_n) \in H^1(\bW^{1,p}_D(\Omega) \times \Lt^p(\Omega) \times \Lt^p(\Omega))$
	is called solution of \cref{eq:regularizedStateEquation}, if, for almost all $t\in (0,T)$, it holds
    \begin{subequations}\label{eq:regstateeq}
	\begin{alignat}{3}
        -\div 	\sigma_n(t) &= 0 & \quad & \text{in } \bW^{-1,p}_D(\Omega),\\ 
		\sigma_n(t) &= \mathbb{C} (\symnabla u_n(t) - z_n(t)) & & \text{in }\Lt^p(\Omega),\label{eq:regstress}\\
		\boverdot{z}_n(t) &\in \partial I_{n}(\sigma_n(t)
		- \varepsilon_n \mathbb{B}z_n(t)) &&\text{in } \Lt^p(\Omega), \label{eq:regsubdiff}\\
		u_n(t) - u_{D,n}(t) &\in \bW^{1,p}_D(\Omega), \\
		(u_n, \sigma_n)(0) &= (u_{D,n}(0), \sigma_0) & & \text{in } \bW^{1,p}(\Omega)\times \Lt^p(\Omega).
	\end{alignat}    
    \end{subequations}
\end{definition}

In order to analyze \cref{eq:regularizedStateEquation} we will apply the results from
\cite[section 3]{paperAbstract}. 

\begin{definition}
\label{def:EVIOperators}
	Let $p$ be as in \cref{def:regularizedSolution}.
	We define the linear and continuous operator
	\begin{equation*}
		Q_n : \Lt^p(\Omega) \rightarrow \Lt^p(\Omega),
		\largespace z \mapsto (\mathbb{C} + \varepsilon_n\mathbb{B})z
		-\Cb \nabla^s\mathcal{T}(-\div\mathbb{C}z, 0),	
	\end{equation*}
	where $\mathcal{T}$ is the solution operator from \cref{eq:defT}.
\end{definition}

Let us note again that for this section only the case $p = 2$ is needed. However,
the following holds also when $p \neq 2$, which we will use in \cref{sec:optimalityConditions} below.

\begin{proposition}[Transformation into an EVI]
\label{prop:transformIntoEVI}
	Let $p$ again be as in \cref{def:regularizedSolution} and $\mathcal{T}$ the solution operator from \cref{eq:defT}.
	Then $(u_n, \sigma_n, z_n) \in H^1(\bW^{1,p}(\Omega) \times \Lt^p(\Omega) \times \Lt^p(\Omega))$ is a solution
	of \cref{eq:regstateeq} if and only if $z_n$ is a solution of
	\begin{align}
	\label{eq:reducedStateEquation}
		\boverdot{z}_n \in \partial I_{n}\big(\Cb \nabla^s\mathcal{T}(0, u_{D,n}) - Q_n z_n\big),
		\mediumspace z_n(0) = \symnabla u_{D,n}(0) - \mathbb{A}\sigma_0,
	\end{align}
	and $u_n$ and $\sigma_n$ are defined through
	$u_n = \mathcal{T}(-\div(\mathbb{C}z_n), u_{D,n})$ and
	$\sigma_n = \mathbb{C} (\symnabla u_n - z_n)$.
	Moreover, if $\varepsilon_n > 0$, then $Q_n$ is coercive.
\end{proposition}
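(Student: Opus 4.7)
The plan is to verify the equivalence between \cref{eq:regstateeq} and \cref{eq:reducedStateEquation} by eliminating the displacement $u_n$ through the solution operator $\mathcal{T}$, and then to establish coercivity of $Q_n$ by a Cauchy--Schwarz argument in the $\mathbb{C}$-weighted inner product.

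For the forward implication, given a solution $(u_n, \sigma_n, z_n)$ of \cref{eq:regstateeq}, I would combine the balance of momentum with \cref{eq:regstress} to rewrite $-\div\mathbb{C}\symnabla u_n = -\div(\mathbb{C} z_n)$; together with $u_n - u_{D,n} \in \bW^{1,p}_D(\Omega)$ and \cref{lem:w1sExistence}, this uniquely identifies $u_n = \mathcal{T}(-\div(\mathbb{C} z_n), u_{D,n})$. The linearity of $\mathcal{T}$ lets me split the two arguments, and a direct computation using \cref{def:EVIOperators} then shows $\sigma_n - \varepsilon_n \mathbb{B} z_n = \mathbb{C}\symnabla\mathcal{T}(0, u_{D,n}) - Q_n z_n$, turning \cref{eq:regsubdiff} into \cref{eq:reducedStateEquation}. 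The initial condition $z_n(0) = \symnabla u_{D,n}(0) - \mathbb{A}\sigma_0$ then follows from inverting \cref{eq:regstress} at $t=0$.

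The reverse implication reads the same manipulation backwards: starting from $z_n$, I define $u_n := \mathcal{T}(-\div(\mathbb{C} z_n), u_{D,n})$ and $\sigma_n := \mathbb{C}(\symnabla u_n - z_n)$, and check the equations of \cref{eq:regstateeq} line by line. The only non-mechanical verification is the initial condition $u_n(0) = u_{D,n}(0)$: by uniqueness in \cref{lem:w1sExistence}, it suffices to verify $-\div \mathbb{C}(\symnabla u_{D,n}(0) - z_n(0)) = 0$, which holds because the bracket equals $\mathbb{A}\sigma_0$ and $\sigma_0 \in \EE(\Omega)$ by \cref{assu:standingAssumptionsForDefinitionAndExistence}. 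The identity $\sigma_n(0) = \sigma_0$ then follows from $\mathbb{C}\mathbb{A}\sigma_0 = \sigma_0$.

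For the coercivity claim, I set $v := \mathcal{T}(-\div(\mathbb{C} z), 0) \in \bH^1_D(\Omega)$ for $z \in \Lt^2(\Omega)$. Testing the defining variational equation for $v$ with $v$ itself and invoking the symmetry of $\mathbb{C}$, I obtain $\scalarproduct{\mathbb{C}\symnabla v}{z}{\Lt^2(\Omega)} = \|\symnabla v\|^2_{\Lt^2(\Omega)_\mathbb{C}}$, and Cauchy--Schwarz in the $\mathbb{C}$-weighted inner product bounds this by $\|z\|^2_{\Lt^2(\Omega)_\mathbb{C}}$. Consequently, the elasticity contributions in $\scalarproduct{Q_n z}{z}{\Lt^2(\Omega)}$ subtract to a nonnegative quantity, and what remains is $\varepsilon_n \scalarproduct{\mathbb{B} z}{z}{\Lt^2(\Omega)} \geq \varepsilon_n \underline{b}\|z\|^2_{\Lt^2(\Omega)}$. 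The main point to watch out for is precisely this cancellation: it is not a priori obvious that $(\mathbb{C} + \varepsilon_n \mathbb{B}) z - \mathbb{C}\symnabla\mathcal{T}(-\div(\mathbb{C} z), 0)$ tested against $z$ remains coercive, and the argument crucially depends on the symmetry of $\mathbb{C}$ to rewrite the mixed term as a squared $\mathbb{C}$-seminorm that can be dominated by $\|z\|^2_{\Lt^2(\Omega)_\mathbb{C}}$.
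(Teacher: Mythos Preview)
Your proof is correct and follows essentially the same line as the paper's. The only minor variation is in the coercivity step: the paper observes directly that, with $v := \mathcal{T}(-\div(\mathbb{C}z),0)$, the variational equation gives the orthogonality $(\mathbb{C}(z-\symnabla v),\symnabla v)_{\Lt^2(\Omega)}=0$, so $(\mathbb{C}(z-\symnabla v),z)_{\Lt^2(\Omega)} = \|z-\symnabla v\|_{\Lt^2(\Omega)_\mathbb{C}}^2 \geq 0$, which yields the nonnegativity as an identity rather than via your Cauchy--Schwarz estimate.
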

\begin{proof}
	In view of the definition of $Q_n$ and $\mathcal{T}$,
	we only have to verify that the initial conditions are fulfilled.
	Clearly, if $(u_n, \sigma_n, z_n)$
	is a solution of \cref{eq:regstateeq},
	$z_n(0) = \symnabla u_{D,n}(0) - 	\mathbb{A}\sigma_0$
	follows immediately from \cref{eq:regstress}. On the other hand,
	if $z_n$ is a solution of \cref{eq:reducedStateEquation}, then $\sigma_0 \in \EE(\Omega)$ implies
	\begin{align*}
		u_n(0) = \mathcal{T}(-\div(\mathbb{C}z_n(0)), u_{D,n}(0))
		= \mathcal{T}(-\div(\mathbb{C}\symnabla u_{D,n}(0)), u_{D,n}(0))
	\end{align*}
	hence, $u_n(0) = u_{D,n}(0)$ and
	$\sigma_n(0) = \mathbb{C} (\symnabla u_{D,n}(0) - z_n(0)) = \sigma_0$.
	
	Let us now investigate the coercivity of $Q_n$. Using the definition of $\mathcal{T}$
	one obtains
	\begin{align*}
		\scalarproduct{\mathbb{C}(z_n
		- \symnabla \mathcal{T}(-\div(\mathbb{C}z_n), 0))}{z_n}{\Lt^2(\Omega)} 
		= \norm{z_n - \symnabla \mathcal{T}(-\div(\mathbb{C}z_n), 0))}{\Lt^2(\Omega)_\mathbb{C}}^2,
	\end{align*}
	which immediately yields the coercivity of $Q_n$ when $\varepsilon_n > 0$.
\end{proof}

We are now in the position to deduce existence and uniqueness for \cref{eq:regstateeq}.
When $\lambda_n = 0$, \cref{prop:transformIntoEVI} allows us to apply \cite[Theorem 3.3]{paperAbstract} 
(where we set $R = \mathcal{T}(0, \cdot)$; note that all requirements for \cite[Theorem 3.3]{paperAbstract}
can be easily checked by using \cref{prop:transformIntoEVI}
and the fact that
$Ru_{D,n}(0) - Q_n z_n(0) =
\sigma_0 - \varepsilon_n \mathbb{B}(\symnabla u_{D,n}(0) - \mathbb{A}\sigma_0)
\in \mathcal{K}(\Omega)$,
see
\cref{eq:regularization_initial_condition}).
In case of $\lambda_n > 0$, existence and uniqueness follows immediately by 
Banach's contraction principle applied to the integral equation associated with \cref{eq:reducedStateEquation} 
(so that, in this case, \eqref{eq:regularization_initial_condition} is not needed). Altogether we obtain 

\begin{corollary}
\label{cor:existenceOfASolutionToRegularizedStateEquation}
	For every $n \in \mathbb{N}$ there exists a unique solution
	$(u_n, \sigma_n, z_n) \in H^1(\bH^1(\Omega) \times \Lt^2(\Omega) \times \Lt^2(\Omega))$,
	of \cref{eq:regstateeq}.
	In the rest of this section we tacitly use this notation to denote the solution
	of \cref{eq:regstateeq}.
\end{corollary}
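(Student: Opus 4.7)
The plan is to invoke \cref{prop:transformIntoEVI}, which tells us that solving \cref{eq:regstateeq} is equivalent to solving the reduced evolution problem \cref{eq:reducedStateEquation} for $z_n \in H^1(\Lt^2(\Omega))$ alone, after which $u_n$ and $\sigma_n$ are recovered from the explicit formulas $u_n = \mathcal{T}(-\div(\Cb z_n), u_{D,n})$ and $\sigma_n = \Cb(\symnabla u_n - z_n)$. Since these recovery maps are linear and continuous from $H^1(\Lt^2(\Omega))$ (together with $H^1(\bH^1(\Omega))$-data) into $H^1(\bH^1(\Omega)\times\Lt^2(\Omega))$, it suffices to establish existence and uniqueness for \cref{eq:reducedStateEquation}. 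I would then split the argument according to whether $\lambda_n$ is zero or positive, as these two regimes require different abstract tools.

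\emph{Case $\lambda_n > 0$.} Here $\partial I_n = \partial I_{\lambda_n}$ is single-valued and globally Lipschitz on $\Lt^2(\Omega)$ with constant $1/\lambda_n$ (as the Fr\'echet derivative of the Yosida regularization). Consequently, the inclusion \cref{eq:reducedStateEquation} is in fact an ODE in the Hilbert space $\Lt^2(\Omega)$ whose right-hand side $z \mapsto \partial I_n(\Cb\symnabla\mathcal{T}(0,u_{D,n}(t)) - Q_n z)$ is affine plus Lipschitz in $z$ with the non-autonomous source term in $L^2(0,T;\Lt^2(\Omega))$. I would apply Banach's contraction principle to the equivalent integral equation on $C([0,T];\Lt^2(\Omega))$ (using a weighted exponential norm, or piecing together on short intervals), which yields a unique continuous solution; then \cref{eq:reducedStateEquation} itself shows that $\boverdot{z}_n \in L^2(0,T;\Lt^2(\Omega))$, so $z_n \in H^1(\Lt^2(\Omega))$. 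In this case the initial-value compatibility \cref{eq:regularization_initial_condition} is not needed, since $\partial I_n$ is defined everywhere.

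\emph{Case $\lambda_n = 0$.} Now we must have $\varepsilon_n > 0$, so $Q_n$ is coercive by the last assertion of \cref{prop:transformIntoEVI}, and $\partial I_n = \partial I_{\KK(\Omega)}$ is a genuine multi-valued maximal monotone operator. This is exactly the setting of \cite[Theorem 3.3]{paperAbstract}, which I would apply with the operator $R := \mathcal{T}(0,\cdot)$ composed with $\Cb\symnabla$ on the source side and $Q_n$ on the state side. The hypotheses of that theorem (linearity and continuity of $R$, coercivity of $Q_n$, and $H^1$-regularity of the forcing) are all furnished by \cref{lem:w1sExistence} and \cref{prop:transformIntoEVI}. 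The only non-trivial requirement is that the initial value $z_n(0) = \symnabla u_{D,n}(0) - \Ab\sigma_0$ be admissible, meaning $Ru_{D,n}(0) - Q_n z_n(0) \in \KK(\Omega)$; a direct computation (used already in the proof of \cref{prop:transformIntoEVI}) shows that this quantity equals $\sigma_0 - \varepsilon_n \Bb(\symnabla u_{D,n}(0) - \Ab\sigma_0)$, which lies in $\KK(\Omega)$ precisely by the standing compatibility assumption \cref{eq:regularization_initial_condition}.

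The main obstacle, to the extent there is one, is bookkeeping: verifying that the hypotheses of \cite[Theorem 3.3]{paperAbstract} really match our situation and that the admissibility of the initial datum is exactly \cref{eq:regularization_initial_condition}. Uniqueness in both cases is automatic --- in the Lipschitz case from the contraction argument, and in the subdifferential case from standard monotonicity (subtract two solutions, test with their difference, use monotonicity of $\partial I_{\KK(\Omega)}$ and coercivity of $Q_n$, then apply Gronwall). Combining both cases yields the unique solution $(u_n,\sigma_n,z_n) \in H^1(\bH^1(\Omega)\times\Lt^2(\Omega)\times\Lt^2(\Omega))$ claimed in the corollary.
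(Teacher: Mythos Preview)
Your proposal is correct and follows essentially the same route as the paper: reduce via \cref{prop:transformIntoEVI} to \cref{eq:reducedStateEquation}, handle $\lambda_n>0$ by Banach's contraction principle on the integral equation (noting that \cref{eq:regularization_initial_condition} is unnecessary there), and handle $\lambda_n=0$ by invoking \cite[Theorem~3.3]{paperAbstract} with the initial admissibility supplied by \cref{eq:regularization_initial_condition}. The only difference is cosmetic ordering and the additional detail you give on uniqueness in the $\lambda_n=0$ case, which the paper leaves implicit in the cited theorem.
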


\begin{remark}
    We note that the existence of a solution for \cref{eq:regstateeq} is a classical result that 
    can also be found in the literature, see e.g.~\cite{hanreddy}. 
    However, since we need the transformation from \cref{prop:transformIntoEVI} later anyway in
    Propositions \ref{prop:existenceOfOptimalSolutionsOfTheRegularizedProblems} and \ref{prop:solutionOperatorDerivative} and
    the existence of a solution is an immediate consequence thereof, 
    we presented the above corollary for convenience of the reader.
\end{remark}

\begin{remark}\label{rem:L2}
    We moreover point out that, in case of $\lambda_n > 0$, the global Lipschitz continuity of $\partial I_n$ 
    allows to establish the existence of a unique solution to \cref{eq:regstateeq} for less regular data.
    Since this does however not hold for the limit problem \cref{eq:redsol}, 
    we cannot make any use of this in the upcoming analysis.
\end{remark}

Having proved the existence of a solution to \cref{eq:regularizedStateEquation}
we proceed with the analysis for the limit case $n \rightarrow \infty$. For this purpose we need the following result, which is
an immediate consequence of \cite[Lemme 3.3]{brezis}.

\begin{lemma}
\label{lem:subdifferentialDerivative}
	Let $\lambda \geq 0$ and $\tau \in H^1(\Lt^2(\Omega))$. Then
	\begin{align*}
		\int_a^b \scalarproduct{\xi(t)}{\boverdot{\tau}(t)}{\Lt^2(\Omega)} dt
		= I_\lambda(\tau(b)) - I_\lambda(\tau(a))
	\end{align*}
	holds for all $\xi : [0, T] \rightarrow \Lt^2(\Omega)$ such that $\xi(t) \in \partial I_\lambda(\tau(t))$
	for almost all $t \in [0, T]$ and all $0 \leq a \leq  b \leq T$.
\end{lemma}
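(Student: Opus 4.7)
The plan is to invoke the cited Brezis result (Lemme 3.3 in \cite{brezis}) directly with $H = \Lt^2(\Omega)$ and the convex function $\varphi = I_\lambda$. That lemma furnishes precisely a chain rule of this type for any proper, convex, lower semicontinuous function on a Hilbert space, applied along an absolutely continuous curve whose derivative lies in $L^2$. All that remains is to check that $I_\lambda$ falls into this framework and that $\tau$ has the required regularity.

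First I would verify the structural hypotheses on $I_\lambda$. For $\lambda = 0$, $I_0 = I_{\KK(\Omega)}$ is the indicator function of the closed convex set $\KK(\Omega) \subset \Lt^2(\Omega)$ (closedness being immediate from \cref{def:admissibleStresses}), hence proper, convex, and lower semicontinuous. For $\lambda > 0$, $I_\lambda$ is the Moreau--Yosida regularization of $I_{\KK(\Omega)}$; it is a convex $C^{1,1}$ function on $\Lt^2(\Omega)$ whose derivative $\partial I_\lambda(\tau) = \lambda^{-1}(\tau - \pi_\KK(\tau))$ is globally Lipschitz with constant $1/\lambda$, and in particular is proper, convex, and lower semicontinuous. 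In both cases the (possibly multivalued) operator $\partial I_\lambda$ appearing in the hypothesis of the statement coincides with the convex subdifferential used in the Brezis chain rule.

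Second, since $\tau \in H^1(\Lt^2(\Omega))$, its canonical representative is absolutely continuous from $[0,T]$ into $\Lt^2(\Omega)$ with $\boverdot{\tau} \in L^2(0,T;\Lt^2(\Omega))$, which is exactly the regularity required by the Brezis lemma. Together with the measurable selection hypothesis $\xi(t) \in \partial I_\lambda(\tau(t))$ f.a.a.\ $t$, the lemma yields that $t \mapsto I_\lambda(\tau(t))$ is absolutely continuous on $[0,T]$ with
\begin{equation*}
    \frac{d}{dt}\, I_\lambda(\tau(t)) = \scalarproduct{\xi(t)}{\boverdot{\tau}(t)}{\Lt^2(\Omega)}
    \quad \text{f.a.a. } t \in [0,T].
\end{equation*}
Integrating this identity over $[a,b] \subset [0,T]$ and applying the fundamental theorem of calculus for absolutely continuous functions produces the claimed equation.

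There is essentially no real obstacle: the statement is a direct specialization of the Brezis chain rule, and the only thing one has to be a little careful about is the $\lambda = 0$ case, where $\partial I_\lambda$ is genuinely set-valued and $I_\lambda$ takes the value $+\infty$ outside $\KK(\Omega)$. This, however, is precisely the generality in which Brezis' result is formulated, so no additional argument is needed.
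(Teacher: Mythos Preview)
Your proposal is correct and matches the paper's approach exactly: the paper does not give a proof at all but simply states that the lemma ``is an immediate consequence of \cite[Lemme~3.3]{brezis}'', which is precisely the Brezis chain rule you invoke. Your explicit verification of the hypotheses (proper, convex, lower semicontinuous for both $\lambda=0$ and $\lambda>0$; absolute continuity of $\tau$) is more detailed than what the paper provides, but the argument is the same.
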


Now we will establish a priori estimates and then turn to the existence of a solution
to the state equation \cref{eq:stateEquation}. 

\begin{lemma}[A priori estimates]
\label{lem:aPrioriEstimates}
	The inequalities
	\begin{align}
	\label{eq:aPriori1}
		\norm{\boverdot{\sigma}_{n}}{L^2(\Lt^2(\Omega)_\mathbb{A})}^2
		+ \varepsilon_{n} \norm{\boverdot{z}_{n}}{L^2(\Lt^2(\Omega)_\mathbb{B})}^2
		\leq \scalarproduct{\boverdot{\sigma}_{n}}
		{\symnabla \boverdot{u}_{D,n}}{L^2(\Lt^2(\Omega))}
	\end{align}
	and
	\begin{align}
	\label{eq:aPriori2}
		I_{n}(\sigma_{n}(t) - \varepsilon_{n} \mathbb{B} z_{n}(t))
		\leq \norm{\boverdot{\sigma}_{n}}{L^2(\Lt^2(\Omega))}
		\norm{\symnabla \boverdot{u}_{D,n}}{L^2(\Lt^2(\Omega))}
	\end{align}
	hold for all $n \in \mathbb{N}$ and all $t \in [0,T]$.
\end{lemma}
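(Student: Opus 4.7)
The plan is to derive a single energy identity by time-differentiating the elastic relation, testing with $\boverdot{\sigma}_n$, and then using the flow rule together with \cref{lem:subdifferentialDerivative} to handle the plastic work. Both \cref{eq:aPriori1} and \cref{eq:aPriori2} will then fall out of this identity by choosing the integration endpoint and dropping different non-negative terms.

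First, I would differentiate \cref{eq:regstress} in time to get $\mathbb{A}\boverdot{\sigma}_n = \symnabla \boverdot{u}_n - \boverdot{z}_n$, take the $\Lt^2(\Omega)$-inner product with $\boverdot{\sigma}_n$ and integrate over $[0,t]$, which yields
\begin{equation*}
    \int_0^t \scalarproduct{\mathbb{A}\boverdot{\sigma}_n}{\boverdot{\sigma}_n}{\Lt^2(\Omega)}\,ds + \int_0^t \scalarproduct{\boverdot{z}_n}{\boverdot{\sigma}_n}{\Lt^2(\Omega)}\,ds = \int_0^t \scalarproduct{\symnabla \boverdot{u}_n}{\boverdot{\sigma}_n}{\Lt^2(\Omega)}\,ds.
\end{equation*}
Since $\sigma_n(s)\in \EE(\Omega)$ for a.e.\ $s$, the derivative $\boverdot{\sigma}_n(s)$ lies in the closed subspace $\EE(\Omega)$ as well, so $\boverdot{u}_n - \boverdot{u}_{D,n}\in \bH^1_D(\Omega)$ allows me to replace $\symnabla \boverdot{u}_n$ by $\symnabla \boverdot{u}_{D,n}$ in the right-hand side.

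For the middle term I would introduce the shifted stress $\tau_n := \sigma_n - \varepsilon_n \mathbb{B} z_n$, which lies in $H^1(\Lt^2(\Omega))$ with $\boverdot{\tau}_n = \boverdot{\sigma}_n - \varepsilon_n \mathbb{B}\boverdot{z}_n$, and satisfies $\boverdot{z}_n(s)\in \partial I_n(\tau_n(s))$ by \cref{eq:regsubdiff}. The decomposition
\begin{equation*}
    \scalarproduct{\boverdot{z}_n}{\boverdot{\sigma}_n}{\Lt^2(\Omega)} = \scalarproduct{\boverdot{z}_n}{\boverdot{\tau}_n}{\Lt^2(\Omega)} + \varepsilon_n \scalarproduct{\mathbb{B}\boverdot{z}_n}{\boverdot{z}_n}{\Lt^2(\Omega)}
\end{equation*}
combined with \cref{lem:subdifferentialDerivative} gives $\int_0^t \scalarproduct{\boverdot{z}_n}{\boverdot{\tau}_n}{\Lt^2(\Omega)}\,ds = I_n(\tau_n(t)) - I_n(\tau_n(0))$. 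The initial data $\sigma_n(0)=\sigma_0$ and $z_n(0)=\symnabla u_{D,n}(0) - \mathbb{A}\sigma_0$ give $\tau_n(0) = \sigma_0 - \varepsilon_n \mathbb{B}(\symnabla u_{D,n}(0) - \mathbb{A}\sigma_0)$, which lies in $\KK(\Omega)$ either by \cref{eq:regularization_initial_condition} (case $\lambda_n=0$) or because it reduces to $\sigma_0 \in \KK(\Omega)$ (case $\varepsilon_n=0$); in each case $I_n(\tau_n(0))=0$.

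Assembling these ingredients produces the key identity
\begin{equation*}
    \int_0^t \scalarproduct{\mathbb{A}\boverdot{\sigma}_n}{\boverdot{\sigma}_n}{\Lt^2(\Omega)}\,ds + \varepsilon_n \int_0^t \scalarproduct{\mathbb{B}\boverdot{z}_n}{\boverdot{z}_n}{\Lt^2(\Omega)}\,ds + I_n(\tau_n(t)) = \int_0^t \scalarproduct{\symnabla \boverdot{u}_{D,n}}{\boverdot{\sigma}_n}{\Lt^2(\Omega)}\,ds.
\end{equation*}
Setting $t=T$ and dropping the non-negative term $I_n(\tau_n(T))$ yields \cref{eq:aPriori1}. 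Keeping $I_n(\tau_n(t))$ instead, dropping the two non-negative integrals on the left and applying Cauchy--Schwarz on $[0,t]\subseteq [0,T]$ yields \cref{eq:aPriori2}. I expect the main technical subtlety to be the rigorous chain-rule step via \cref{lem:subdifferentialDerivative} for the composition $s\mapsto I_n(\tau_n(s))$ under only $H^1$-in-time regularity, together with the pointwise interpretation of the equilibrium constraint needed to justify $\boverdot{\sigma}_n(s)\in \EE(\Omega)$ almost everywhere.
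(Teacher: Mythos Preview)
Your proof is correct and follows essentially the same route as the paper: test the time-differentiated constitutive relation with $\boverdot{\sigma}_n$, use $\boverdot{\sigma}_n(t)\in\EE(\Omega)$ to replace $\boverdot{u}_n$ by $\boverdot{u}_{D,n}$, apply \cref{lem:subdifferentialDerivative} to the plastic-work term, and read off both estimates from the resulting energy identity. The paper organizes the computation pointwise first and then integrates, but the content and the key identity (your displayed equation, the paper's \eqref{local2}) are identical.
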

\begin{proof}
	We use the fact that
	$\sigma_{n}(t) \in \mathcal{E}(\Omega)$ (thus
	$\boverdot{\sigma}_{n}(t) \in \mathcal{E}(\Omega)$) to obtain
	\begin{align*}
		&\scalarproduct{\mathbb{A} \boverdot{\sigma}_{n}(t)}{\boverdot{\sigma}_{n}(t)}{\Lt^2(\Omega)}
		+ \varepsilon_n \scalarproduct{\boverdot{z}_{n}(t)}{\mathbb{B}
		\boverdot{z}_{n}(t)}{\Lt^2(\Omega)}
		+ \scalarproduct{\boverdot{z}_{n}(t)}
		{\boverdot{\sigma}_n(t) - \varepsilon_n\mathbb{B} \boverdot{z}_{n}(t)}{\Lt^2(\Omega)} \\
		&\quad =
		\scalarproduct{\mathbb{A} \boverdot{\sigma}_{n}(t) + \boverdot{z}_n(t)}
		{\boverdot{\sigma}_{n}(t)}{\Lt^2(\Omega)}
		= \scalarproduct{\symnabla \boverdot{u}_{n}(t)}{\boverdot{\sigma}_{n}(t)}{\Lt^2(\Omega)}
		= \scalarproduct{\symnabla \boverdot{u}_{D,n}(t)}{\boverdot{\sigma}_{n}(t)}{\Lt^2(\Omega)}
	\end{align*}
	for almost all $t \in [0,T]$.
	Integrating this equation with respect to time, applying
	\cref{lem:subdifferentialDerivative} and
	using $(\sigma_0 - \varepsilon_n \mathbb{B}z_n(0))
	\in \mathcal{K}(\Omega)$ yields
	\begin{equation}
	\label{local2}
	\begin{aligned}
		\norm{\boverdot{\sigma}_{n}}{L^2(0, t; \Lt^2(\Omega)_\mathbb{A})}^2
		+ \varepsilon_{n} \norm{\boverdot{z}_{n}}{L^2(0, t; \Lt^2(\Omega)_\mathbb{B})}^2
		+ I_{n}(\sigma_{n}(t) - \varepsilon_{n} \mathbb{B} z_{n}(t)) \qquad\qquad & \\
		= \scalarproduct{\boverdot{\sigma}_{n}}
		{\symnabla \boverdot{u}_{D,n}}{L^2(0, t; \Lt^2(\Omega))} &
	\end{aligned}
	\end{equation}
	for all $t \in [0,T]$. The inequalities \cref{eq:aPriori1} and
	\cref{eq:aPriori2}  now follow from this equation
	(using $I_n \geq 0$ to get \cref{eq:aPriori1}).
\end{proof}

\begin{lemma}
\label{lem:weakConvergenceAdmissibleStresses}
	Let $w \in \Lt^2(\Omega)$ and $\sequence{w}{n} \subset \Lt^2(\Omega)$ such that
	$w_n \rightharpoonup w$ in $\Lt^2(\Omega)$ and assume
	that the sequence $I_{n}(w_n)$ is bounded.
	Then $w \in \mathcal{K}(\Omega)$.
\end{lemma}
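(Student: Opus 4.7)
The plan is to exploit the bound on $I_n(w_n)$ in order to show that $w_n$ lies arbitrarily close, in the strong $\Lt^2$-topology, to the convex and closed set $\mathcal{K}(\Omega)$. Once that is established, one may pass to the weak limit and use weak closedness of $\mathcal{K}(\Omega)$.

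More concretely, I would distinguish the two regimes that are mixed in the definition of $I_n$. For those indices $n$ with $\lambda_n = 0$, the quantity $I_n(w_n) = I_{\mathcal{K}(\Omega)}(w_n)$ is finite only when $w_n \in \mathcal{K}(\Omega)$ itself, so one may simply set $v_n := w_n$. For those indices with $\lambda_n > 0$, we rewrite the Yosida regularization as
\begin{equation*}
    I_n(w_n) = \frac{1}{2\lambda_n}\,\norm{w_n - \pi_{\mathcal{K}(\Omega)}(w_n)}{\Lt^2(\Omega)}^2
\end{equation*}
and set $v_n := \pi_{\mathcal{K}(\Omega)}(w_n) \in \mathcal{K}(\Omega)$. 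In both cases the boundedness assumption $I_n(w_n) \leq C$ together with $\lambda_n \to 0$ yields
\begin{equation*}
    \norm{w_n - v_n}{\Lt^2(\Omega)}^2 \leq 2\,C\,\lambda_n \longrightarrow 0,
\end{equation*}
so $w_n - v_n \to 0$ strongly in $\Lt^2(\Omega)$.

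Combining this strong convergence with the weak convergence $w_n \rightharpoonup w$, we obtain $v_n \rightharpoonup w$ in $\Lt^2(\Omega)$. Since $\mathcal{K}(\Omega)$ is convex and, as the intersection of pointwise constraints $\tau(x)\in K$ with $K$ closed, also closed in $\Lt^2(\Omega)$, it is weakly sequentially closed in $\Lt^2(\Omega)$. Because $v_n \in \mathcal{K}(\Omega)$ for every $n$, this gives $w \in \mathcal{K}(\Omega)$, as claimed.

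I do not expect a genuine obstacle here: the lemma is essentially the statement that a bounded Moreau envelope forces the argument into the constraint set in the limit, and the only point requiring a slight bit of care is the simultaneous treatment of the indices with $\lambda_n = 0$ and those with $\lambda_n > 0$, which is handled by the case split above.
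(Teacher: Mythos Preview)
Your proof is correct and essentially coincides with the paper's argument: both exploit that the boundedness of $I_n(w_n)$ together with $\lambda_n \to 0$ forces $\|w_n - \pi_{\mathcal{K}(\Omega)}(w_n)\|_{\Lt^2(\Omega)} \to 0$, and then conclude via convexity and closedness of $\mathcal{K}(\Omega)$. The only cosmetic difference is that the paper absorbs your case split and the passage to the limit into a single line by invoking weak lower semicontinuity of the convex, continuous map $\tau \mapsto \|\tau - \pi_{\mathcal{K}(\Omega)}(\tau)\|_{\Lt^2(\Omega)}^2$, yielding $\|w - \pi_{\mathcal{K}(\Omega)}(w)\|^2 \leq \liminf_n 2\lambda_n I_n(w_n) = 0$ directly.
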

\begin{proof}
	Clearly, the mapping $\Lt^2(\Omega) \ni \tau \mapsto \norm{\tau - \pi_{\KK(\Omega)}(\tau)}{\Lt^2(\Omega)}^2 \in \mathbb{R}$
	is convex and continuous and thus weakly lower semicontinuous, hence,
	\begin{align*}
		0
		\leq \norm{w - \pi_{\KK(\Omega)}(w)}{\Lt^2(\Omega)}^2
		\leq \liminf_{n \rightarrow \infty} \norm{w_n - \pi_{\KK(\Omega)}(w_n)}{\Lt^2(\Omega)}^2
		= \liminf_{n \rightarrow \infty} 2 \lambda_n I_n(w_n)
		= 0,
	\end{align*}
	which implies $w = \pi_{\KK(\Omega)}(w)$.
\end{proof}

\begin{theorem}[Existence and approximation of a reduced solution]
\label{thm:existenceAndApproximationOfAStronglyReducedSolution}
	Under \cref{assu:standingAssumptionsForDefinitionAndExistence},
	there exists a unique reduced solution $\sigma \in H^1(\Lt^2(\Omega))$ of \cref{eq:stateEquation}
	and it holds $\sigma_n \rightharpoonup \sigma$ in $H^1(\Lt^2(\Omega))$.
	Furthermore, if $u_{D, n} \rightarrow u_D$ in $H^1(\bH^1(\Omega))$, then $\sigma_n \rightarrow \sigma$ in $H^1(\Lt^2(\Omega))$.
\end{theorem}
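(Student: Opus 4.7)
The plan is to derive the theorem by passing to the limit $n\to\infty$ in the regularized problems \cref{eq:regstateeq}, in the same spirit as the continuity result \cref{prop:continuityPropertiesOfStronglyReducedSolutions}, but with the added difficulty of coping with the vanishing viscosity/hardening perturbations and the indicator-function limit of the Yosida regularization.

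\emph{Extraction of the weak limit.}
First I would use \cref{lem:aPrioriEstimates} to bound $\sigma_n$ in $H^1(\Lt^2(\Omega))$ (together with $\sigma_n(0)=\sigma_0$) and $\sqrt{\varepsilon_n}\,\boverdot{z}_n$ in $L^2(\Lt^2(\Omega))$. A subsequence then satisfies $\sigma_n\rightharpoonup\sigma$ in $H^1(\Lt^2(\Omega))$; the continuous embedding into $C(\Lt^2(\Omega))$ yields $\sigma_n(t)\rightharpoonup\sigma(t)$ for every $t$, so $\sigma(t)\in\EE(\Omega)$ for all $t$ and $\sigma(0)=\sigma_0$. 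To show $\sigma(t)\in\KK(\Omega)$ I set $w_n(t):=\sigma_n(t)-\varepsilon_n\mathbb{B}z_n(t)$ and invoke \cref{lem:weakConvergenceAdmissibleStresses}: the bound on $I_n(w_n(t))$ is furnished by \cref{eq:aPriori2}, while $w_n(t)\rightharpoonup\sigma(t)$ follows from $\varepsilon_n\|z_n(t)\|_{\Lt^2(\Omega)}\to 0$, which is a consequence of $\sqrt{\varepsilon_n}\|\boverdot{z}_n\|_{L^2(\Lt^2(\Omega))}$ being bounded by \cref{eq:aPriori1} and of $\varepsilon_n\|z_n(0)\|\to 0$ (since $u_{D,n}(0)$ is bounded in $\bH^1(\Omega)$).

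\emph{Limit in the flow rule.}
From $\boverdot{z}_n(t)\in\partial I_n(w_n(t))$ and $I_n(\tau(t))=0$ for any $\tau(t)\in\KK(\Omega)$, the subdifferential inequality gives $\scalarproduct{\boverdot{z}_n(t)}{w_n(t)-\tau(t)}{\Lt^2(\Omega)}\geq I_n(w_n(t))\geq 0$ pointwise in $t$. Substituting $\boverdot{z}_n=\symnabla\boverdot{u}_n-\mathbb{A}\boverdot{\sigma}_n$ from \cref{eq:regstress}, using $\sigma_n(t)-\tau(t)\in\EE(\Omega)$ to replace $\boverdot{u}_n$ by $\boverdot{u}_{D,n}$ in the resulting pairing, and integrating in time, I arrive at
\begin{equation*}
    \scalarproduct{\mathbb{A}\boverdot{\sigma}_n-\symnabla\boverdot{u}_{D,n}}{\tau-\sigma_n}{L^2(\Lt^2(\Omega))}
    \geq \int_0^T I_n(w_n)\,dt + \tfrac{\varepsilon_n}{2}\|z_n(T)\|_{\Lt^2(\Omega)_{\mathbb{B}}}^2 - \tfrac{\varepsilon_n}{2}\|z_n(0)\|_{\Lt^2(\Omega)_{\mathbb{B}}}^2,
\end{equation*}
and the two nonnegative terms on the right can be discarded, leaving a lower bound of order $-\varepsilon_n\to 0$. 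On the left-hand side, the linear-in-$\tau$ contributions pass to the limit by weak convergence of $\boverdot{\sigma}_n$ and $\boverdot{u}_{D,n}$; the cross term $\scalarproduct{\symnabla\boverdot{u}_{D,n}}{\sigma_n}{L^2(\Lt^2(\Omega))}$ is treated via \cref{lem:partsConvergence} with $H=\Lt^2(\Omega)$ and $v_n=\symnabla u_{D,n}$ (whose hypotheses are precisely those of \cref{assu:standingAssumptionsForDefinitionAndExistence}(ii)); and the quadratic term $\scalarproduct{\mathbb{A}\boverdot{\sigma}_n}{\sigma_n}{L^2(\Lt^2(\Omega))}$ is absorbed through the weak-lower-semicontinuity argument embodied in \cref{sigmaSquareInequality}. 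Combining these passages yields $\scalarproduct{\mathbb{A}\boverdot{\sigma}-\symnabla\boverdot{u}_D}{\tau-\sigma}{L^2(\Lt^2(\Omega))}\geq 0$ for every admissible $\tau$, and \cref{lem:flowRuleTimeInequality} converts this into the pointwise flow-rule inequality of \cref{def:stronglyReducedSolutionOfStateEquation}.

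\emph{Uniqueness and strong convergence.}
Uniqueness of $\sigma$ is immediate from \cref{lem:uniquenessOfAStronglyReducedSolution}, and the usual subsequence argument upgrades $\sigma_n\rightharpoonup\sigma$ to the full sequence. For the improved convergence under $u_{D,n}\to u_D$ in $H^1(\bH^1(\Omega))$, I would combine the a priori estimate \cref{eq:aPriori1} with the energy identity of \cref{lem:derivativeNorm} applied to the limit: the right-hand side of \cref{eq:aPriori1} then converges to $\|\boverdot{\sigma}\|_{L^2(\Lt^2(\Omega)_{\mathbb{A}})}^2$, which together with weak lower semicontinuity forces $\|\boverdot{\sigma}_n\|_{L^2(\Lt^2(\Omega)_{\mathbb{A}})}\to\|\boverdot{\sigma}\|_{L^2(\Lt^2(\Omega)_{\mathbb{A}})}$ and hence strong convergence of $\boverdot{\sigma}_n$ in $L^2(\Lt^2(\Omega))$; combined with $\sigma_n(0)=\sigma(0)$, this delivers strong convergence in $H^1(\Lt^2(\Omega))$. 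The principal technical nuisance I anticipate is the bookkeeping around the hardening term $\varepsilon_n\mathbb{B}z_n$: since $\|z_n\|$ may a priori blow up like $1/\sqrt{\varepsilon_n}$, one has to rely only on the scaled bounds $\varepsilon_n\|z_n(0)\|\to 0$ and $\varepsilon_n\|z_n(t)\|\to 0$, while exploiting that the residual terms $\tfrac{\varepsilon_n}{2}\|z_n(T)\|^2_{\mathbb{B}}$ and $\int_0^T I_n(w_n)\,dt$ carry the favorable sign and can simply be dropped.
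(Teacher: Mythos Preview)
Your proposal is correct and follows essentially the same route as the paper's proof: extracting a weakly convergent subsequence of $\sigma_n$ via \cref{lem:aPrioriEstimates}, identifying $\sigma(t)\in\EE(\Omega)\cap\KK(\Omega)$ through \cref{lem:weakConvergenceAdmissibleStresses} and the vanishing of $\varepsilon_n\mathbb{B}z_n$, passing to the limit in the integrated flow-rule inequality by combining \cref{lem:partsConvergence} with \cref{sigmaSquareInequality}, and upgrading to strong convergence via norm convergence from \cref{eq:aPriori1} and \cref{lem:derivativeNorm}. The only cosmetic difference is the sign convention in organizing the subdifferential inequality; the paper writes the chain starting from $0=\int I_n(\tau)\,dt$ and estimates downward, while you rearrange to isolate $\scalarproduct{\mathbb{A}\boverdot{\sigma}_n-\symnabla\boverdot{u}_{D,n}}{\tau-\sigma_n}{L^2(\Lt^2(\Omega))}$ on the left.
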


\begin{proof}
    The proof basically follows the lines of the one of \cref{prop:continuityPropertiesOfStronglyReducedSolutions}.
	According to \cref{lem:aPrioriEstimates}, the sequences $\sequence{\sigma}{n}$
	and $\sequence{\sqrt{\varepsilon_n}z}{n}$ are bounded in $H^1(\Lt^2(\Omega))$ (note that
	$\sigma_n(0) = \sigma_0$ and $\sqrt{\varepsilon_n}z_n(0)
	= \sqrt{\varepsilon_n}(\symnabla u_{D,n}(0) - \mathbb{A}\sigma_0) \rightarrow 0$).
	Therefore there exists a subsequence, again denoted by $\sigma_n$, and a weak limit
	$\sigma \in H^1(\Lt^2(\Omega))$ such that $\sigma_n \rightharpoonup \sigma$ and
	$\sigma_n + \varepsilon_n \Bb z_n \rightharpoonup \sigma$ in $H^1(\Lt^2(\Omega))$. Due to
	the linear and continuous embedding $H^1(\Lt^2(\Omega)) \hookrightarrow C(\Lt^2(\Omega))$
	we arrive at $\sigma_n(t) \rightharpoonup \sigma(t)$
	and $\sigma_n(t) + \varepsilon_n \Bb z_n(t) \rightharpoonup \sigma(t)$ in $\Lt^2(\Omega)$ for all $t \in [0,T]$. Hence,
	since $\mathcal{E}(\Omega)$ is weakly closed and $\sigma_n(t) \in \mathcal{E}(\Omega)$
	for all $n \in \mathbb{N}$, we obtain $\sigma(t) \in \mathcal{E}(\Omega)$ for all $t \in [0,T]$.
	Moreover, according	to \cref{lem:aPrioriEstimates}, $I_{n}(\sigma_{n}(t) - \varepsilon_{n} \mathbb{B} z_{n}(t))$ is  bounded and thus,
	\cref{lem:weakConvergenceAdmissibleStresses} gives $\sigma(t) \in \mathcal{K}(\Omega)$
	for all $t \in [0,T]$.
	
    As in the proof of \cref{prop:continuityPropertiesOfStronglyReducedSolutions}, 
    we again employ \cref{lem:partsConvergence} to verify the flow rule in the form \cref{eq:flowRuleTimeInequality}. 
	To this end we choose an arbitrary $\tau \in L^2(\Lt^2(\Omega))$ with $\tau(t) \in \mathcal{E}(\Omega) \cap \mathcal{K}(\Omega)$
	for almost all $t \in [0,T]$ and obtain
	\begin{align*}
		0 &= \int_0^T I_n(\tau(t)) dt
		\stackrel{\cref{eq:regsubdiff}}{\geq}
		\int_0^T I_n (\sigma_n(t) - \varepsilon_n \mathbb{B}z_n(t)) dt
		+ \scalarproduct{\boverdot{z}_n}{\tau - \sigma_n + \varepsilon_n \mathbb{B}z_n)}{L^2(\Lt^2(\Omega))} \\
		&\hspace*{-2mm}\stackrel{\cref{eq:regstress}}{\geq}
		\frac{\varepsilon_n}{2} \scalarproduct{z_n(T)}{\mathbb{B} z_n(T)}{\Lt^2(\Omega)}
		- \frac{\varepsilon_n}{2} \scalarproduct{z_n(0)}{\mathbb{B} z_n(0)}{\Lt^2(\Omega)}
		+ \scalarproduct{\symnabla \boverdot{u}_n - \mathbb{A}\boverdot{\sigma}_n}
		{\tau - \sigma_n}{L^2(\Lt^2(\Omega))} \\
		&\geq - \frac{\varepsilon_n}{2} \scalarproduct{z_n(0)}{\mathbb{B} z_n(0)}{\Lt^2(\Omega)}
		+ \scalarproduct{\symnabla \boverdot{u}_{D,n} - \mathbb{A}\boverdot{\sigma}_n}
		{\tau - \sigma_n}{L^2(\Lt^2(\Omega))},
	\end{align*}
	where we have used the monotonicity of the subdifferential,
	the positivity of $I_n$, the coercivity of $\mathbb{B}$,
	the fact that $\tau,\sigma_n \in \mathcal{E}(\Omega)$, and
	$\boverdot{u}_n - \boverdot{u}_{D,n} \in L^2(\bH^1_D(\Omega))$. This time we set
	\begin{align*}
		a_n := - \frac{\varepsilon_n}{2} \scalarproduct{z_n(0)}{\mathbb{B} z_n(0)}{\Lt^2(\Omega)}
		+ \scalarproduct{\symnabla \boverdot{u}_{D,n}}{\sigma_n}{L^2(\Lt^2(\Omega))}
		+ \scalarproduct{\symnabla \boverdot{u}_{D,n} - \mathbb{A}\boverdot{\sigma}_n}{\tau}{L^2(\Lt^2(\Omega))}
	\end{align*}
	and observe that, by means of  $\sqrt{\varepsilon}z_n(0) \rightarrow 0$ and \cref{lem:partsConvergence},
	\begin{align*}
		- \scalarproduct{\mathbb{A}\boverdot{\sigma}_n}{\sigma_n}{L^2(\Lt^2(\Omega))} \geq a_n
		\rightarrow a := \scalarproduct{\symnabla \boverdot{u}_{D}}{\sigma}{L^2(\Lt^2(\Omega))}
		+ \scalarproduct{\symnabla \boverdot{u}_{D} - \mathbb{A}\boverdot{\sigma}}{\tau}{L^2(\Lt^2(\Omega))}
	\end{align*}
	as $n \rightarrow \infty$. Hence,
	\cref{sigmaSquareInequality} implies that the weak limit $\sigma$ indeed satisfies \cref{eq:flowRuleTimeInequality}.
	Since the reduced solution is unique by \cref{lem:uniquenessOfAStronglyReducedSolution}, 
	a standard argument gives the weak convergence of the whole sequence.
	
	If $u_{D, n} \rightarrow u_D$ in $H^1(\bH^1(\Omega))$, then Lemma \cref{lem:aPrioriEstimates} and \cref{lem:derivativeNorm} imply
	\begin{align*}
		\norm{\boverdot{\sigma}}{L^2(\Lt^2(\Omega)_\mathbb{A})}^2
		&\leq \liminf_{n \rightarrow \infty}
		\norm{\boverdot{\sigma}_{n}}{L^2(\Lt^2(\Omega)_\mathbb{A})}^2 \\
		&\leq \limsup_{n \rightarrow \infty}
		\norm{\boverdot{\sigma}_{n}}{L^2(\Lt^2(\Omega)_\mathbb{A})}^2 \\
		&\leq \limsup_{n \rightarrow \infty}
		\scalarproduct{\boverdot{\sigma}_{n}}{\symnabla \boverdot{u}_{D,n}}{L^2(\Lt^2(\Omega))} 
		= \scalarproduct{\boverdot{\sigma}}{\symnabla \boverdot{u}_{D}}{L^2(\Lt^2(\Omega))}
		= \norm{\boverdot{\sigma}}{L^2(\Lt^2(\Omega)_\mathbb{A})}^2,
	\end{align*}
	which yields the desired strong convergence.
\end{proof}

\begin{remark}\label{rem:suquet}
    In contrast to \cref{thm:existenceAndApproximationOfAStronglyReducedSolution},
    the results in \cite{suquet} only cover the case of constant Dirichlet data $u_D$ and $\lambda_n > 0$, $\varepsilon_n = 0$ 
    (i.e., without hardening) and only prove weak convergence of the stresses for this case.
\end{remark}

\begin{remark}
	In case of the strong convergence $u_{D, n} \rightarrow u_D$ in $H^1(\bH^1(\Omega))$,
	one additionally obtains $\sqrt{\varepsilon_n}z_n \rightarrow 0$ in $H^1(\Lt^2(\Omega))$,
	$I_{n}(\sigma_{n} - \varepsilon_{n} \mathbb{B} z_{n}) \rightarrow 0$ in $L^2(\Omega)$
	and $I_{n}(\sigma_{n}(t) - \varepsilon_{n} \mathbb{B} z_{n}(t)) \rightarrow 0$
	for all $t \in [0,T]$. This follows from \cref{local2} by similar arguments 
    as used at the end of the proof of \cref{thm:existenceAndApproximationOfAStronglyReducedSolution}.
\end{remark}

\section{Existence and Approximation of Optimal Controls}
\label{sec:exopt}

We now turn to the optimization problem \cref{eq:optprobformal}. Let us first give a rigorous definition of our optimal control problem 
based on our previous findings. Relying on \cref{thm:existenceAndApproximationOfAStronglyReducedSolution}, 
the rigorous counterpart of \cref{eq:optprobformal} reads as follows:
\begin{equation}\tag{P}\label{eq:optprob}
\left\{\quad
\begin{aligned}
	\min \shortspace & J(\sigma,\ell) := \Psi(\sigma,\ell) + \frac{\alpha}{2}\norm{\boverdot{\ell}}{L^2(\XX_c)}, \\
	\text{s.t.} \shortspace & \ell \in H^1_0(\XX_c), \quad \sigma \in H^1(\Lt^2(\Omega))\\
	\text{and} \shortspace & \sigma \text{ is a reduced solution of \cref{eq:stateEquation} w.r.t.\ } u_D = \GG \ell + \mathfrak a.
\end{aligned}\right.
\end{equation}
For the rest of the paper, we impose the following assumption on the data in \cref{eq:optprob}:

\begin{assumption}[Initial condition and pseudo force]
\label{assu:sigma0}
\label{assu:standingAssumptionForSecExopt}
	We assume that the initial condition fulfills
	$\sigma_0 \in \mathcal{E}(\Omega) \cap \mathcal{K}(\Omega)$
	and	fix a ``Dirichlet-offset'' $\mathfrak a \in H^1(\bH^1(\Omega))$.
\end{assumption}

\subsection{Existence of Optimal Controls}
\label{subsec:4.1}

According to \cref{thm:existenceAndApproximationOfAStronglyReducedSolution}
there exists for every $u_D \in H^1(\bH^1(\Omega))$ a unique reduced solution
$\sigma \in H^1(\Lt^2(\Omega))$ of \cref{eq:stateEquation} (we can simply choose $\varepsilon_n = 0$
and $u_{D,n} = u_D$ for every $n \in \mathbb{N}$). This leads to the following

\begin{definition}[Solution operator for the state equation]
	For a given $\ell \in H^1_0(\XX_c)$ there exists a unique 
	reduced solution $\sigma$ of \cref{eq:stateEquation} with respect to
	$u_D = \GG\ell + \mathfrak a$. We denote the associated solution operator by
	\begin{align*}
		\mathcal{S}: H^1_0(\XX_c)\rightarrow H^1(\Lt^2(\Omega)), \largespace
		\ell \mapsto \sigma.
	\end{align*}
\end{definition}

\begin{corollary}[Continuity properties of the solution operator]
\label{cor:continuityPropertiesForTheSolutionOperator}
	The solution operator $\mathcal{S} : H^1_0(\XX_c)\rightarrow H^1(\Lt^2(\Omega))$ is
	weakly and strongly continuous, that is,
	\begin{itemize}
	\item[(i)] $\ell_n \rightharpoonup \ell$ in $H^1_0(\XX_c)$ \mediumspace $\Longrightarrow$ 
	\mediumspace $\mathcal{S}(\ell_n) \rightharpoonup \mathcal{S}(\ell)$ in $H^1(\Lt^2(\Omega))$ and
	\item[(ii)] $\ell_n \rightarrow \ell$ in $H^1_0(\XX_c)$ \mediumspace $\Longrightarrow$ 
	\mediumspace $\mathcal{S}(\ell_n) \rightarrow \mathcal{S}(\ell)$ in $H^1(\Lt^2(\Omega))$.
	\end{itemize}
\end{corollary}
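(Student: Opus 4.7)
The plan is to reduce both assertions to Proposition \ref{prop:continuityPropertiesOfStronglyReducedSolutions} by showing that the Dirichlet data $u_{D,n} := \GG \ell_n + \mathfrak{a}$ inherit the convergence properties demanded by Assumption \ref{assu:standingAssumptionsForDefinitionAndExistence}(ii) whenever $\ell_n$ converges in the appropriate sense on the control side. Once this transport of convergence is in place, the proposition immediately produces the desired continuity of $\SS$.

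For part (i), let $\ell_n \weak \ell$ in $H^1_0(\XX_c)$. Since $\XX_c \embed \XX$ continuously and $\GG \in \LL(\XX, \bH^1(\Omega))$, the lifted operator $\GG$ belongs to $\LL(H^1(\XX_c), H^1(\bH^1(\Omega)))$, and weak continuity directly delivers $u_{D,n} \weak u_D$ in $H^1(\bH^1(\Omega))$. For the $L^2(\bH^1(\Omega))$-strong convergence, I would appeal to the Aubin--Lions lemma: the compact embedding $\XX_c \embed \XX$ (a standing assumption) implies compactness of $H^1(\XX_c) \embed L^2(\XX)$, so the bounded sequence $\ell_n$ converges strongly in $L^2(\XX)$, and continuity of $\GG : \XX \to \bH^1(\Omega)$ propagates this to $u_{D,n} \to u_D$ in $L^2(\bH^1(\Omega))$. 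The final-time condition is automatic, since membership in $H^1_0(\XX_c)$ forces $\ell_n(T) = \ell(T) = 0$, whence $u_{D,n}(T) = \mathfrak{a}(T) = u_D(T)$ identically. With all three conditions of Assumption \ref{assu:standingAssumptionsForDefinitionAndExistence}(ii) in place, Proposition \ref{prop:continuityPropertiesOfStronglyReducedSolutions} yields $\SS(\ell_n) \weak \SS(\ell)$ in $H^1(\Lt^2(\Omega))$.

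For part (ii), the strong convergence $\ell_n \to \ell$ in $H^1_0(\XX_c)$ passes directly through the lifted $\GG$ to give $u_{D,n} \to u_D$ in $H^1(\bH^1(\Omega))$, which in particular subsumes all three convergence requirements of Assumption \ref{assu:standingAssumptionsForDefinitionAndExistence}(ii). The second (strong) statement of Proposition \ref{prop:continuityPropertiesOfStronglyReducedSolutions} then gives $\SS(\ell_n) \to \SS(\ell)$ in $H^1(\Lt^2(\Omega))$.

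The only mildly non-routine ingredient is the $L^2(\bH^1(\Omega))$-strong convergence in (i); this step depends essentially on the compactness of the embedding $\XX_c \embed \XX$ assumed in Section \ref{sec:2} via the Aubin--Lions lemma. Without such a compactness the weak-to-weak continuity of $\SS$ would generally fail, because the nonlinearity of the flow rule inequality forces a genuine norm gap between the control level and the Dirichlet data entering the state system.
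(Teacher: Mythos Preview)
Your proof is correct and follows the same overall strategy as the paper: verify that $u_{D,n} = \GG\ell_n + \mathfrak{a}$ satisfies the convergence hypotheses of Assumption~\ref{assu:standingAssumptionsForDefinitionAndExistence}(ii) and then invoke Proposition~\ref{prop:continuityPropertiesOfStronglyReducedSolutions}. The only tactical difference lies in how the three convergence conditions are checked. The paper appeals to the compact embedding $H^1(\XX_c)\hookrightarrow C(\XX)$ (a consequence of the compactness $\XX_c\hookrightarrow\XX$), which in one stroke yields both $\GG\ell_n\to\GG\ell$ in $L^2(\bH^1(\Omega))$ and pointwise convergence at every $t\in[0,T]$, in particular at $t=T$. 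You instead split the argument: Aubin--Lions gives the $L^2(\XX)$-convergence, and for the terminal condition you exploit the structural constraint $\ell_n(T)=\ell(T)=0$ built into $H^1_0(\XX_c)$, so that $u_{D,n}(T)=\mathfrak{a}(T)=u_D(T)$ holds identically. Your route is slightly more hands-on but has the advantage that the terminal-time step is entirely trivial; the paper's route is more uniform but requires the $C(\XX)$-compactness, which is a marginally stronger statement than plain Aubin--Lions. Either way the substance is the same.
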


\begin{proof}
	Let us assume that $\ell_n \rightharpoonup \ell$ in $H^1_0(\XX_c) \subset H^1(\XX_c)$. Since $\XX_c$
	is compactly embedded into $\XX$, $H^1(\XX_c)$ is compactly embedded into
	$C(\XX)$ and hence, $\GG\ell_n \rightarrow \GG\ell$ in $L^2(\bH^1(\Omega))$
	and $(\GG\ell_n)(t) \rightarrow (\GG\ell)(t)$ in $\bH^1(\Omega)$ for all $t \in [0,T]$, in particular for $t = T$. We conclude that the sequence
	$u_{D,n} := \GG\ell_n + \mathfrak a$ fulfills (ii) in \cref{assu:standingAssumptionsForDefinitionAndExistence} with $u_D := \GG\ell + \mathfrak a$.
	The claim then follows from \cref{prop:continuityPropertiesOfStronglyReducedSolutions}.
\end{proof}

Given the (weak) continuity properties of $\SS$, one readily deduces the following

\begin{theorem}[Existence of optimal solutions]
\label{thm:existenceOfOptimalSolutions}
	There exists at least one global solution of \cref{eq:optprob}.
\end{theorem}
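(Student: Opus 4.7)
The plan is to run the direct method in the calculus of variations on the reduced problem $\min_{\ell \in H^1_0(\XX_c)} j(\ell) := J(\SS(\ell),\ell)$.

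First I would check that the infimum $\bar j := \inf_{\ell\in H^1_0(\XX_c)} j(\ell)$ is finite. Boundedness from below follows because $\Psi$ is bounded from below by assumption and the Tikhonov term is non-negative; finiteness is immediate by taking e.g.\ $\ell \equiv 0 \in H^1_0(\XX_c)$, which is admissible and for which $\SS(0)$ exists by \cref{thm:existenceAndApproximationOfAStronglyReducedSolution}. Hence a minimizing sequence $\sequence{\ell}{n}\subset H^1_0(\XX_c)$ with $j(\ell_n)\to \bar j$ exists.

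Next I would extract a weakly convergent subsequence. Since $\Psi$ is bounded from below and $\alpha>0$, the Tikhonov part of the objective forces $\{\boverdot\ell_n\}$ to be bounded in $L^2(\XX_c)$. Because every $\ell_n$ satisfies $\ell_n(0)=0$, a Poincar\'e-type inequality in time yields boundedness of $\sequence{\ell}{n}$ in $H^1(\XX_c)$. The subspace $H^1_0(\XX_c)=\{\ell\in H^1(\XX_c):\ell(0)=\ell(T)=0\}$ is a closed linear subspace of the Hilbert space $H^1(\XX_c)$ (the point-evaluations at $0$ and $T$ are continuous on $H^1(\XX_c)$), hence weakly closed by Mazur's theorem. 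Thus, along a subsequence (not relabelled), $\ell_n \weak \ell^*$ in $H^1_0(\XX_c)$.

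I would then pass to the limit in the objective. By the weak continuity of the solution operator in \cref{cor:continuityPropertiesForTheSolutionOperator}(i), $\sigma_n := \SS(\ell_n) \weak \SS(\ell^*) =: \sigma^*$ in $H^1(\Lt^2(\Omega))$. The assumed weak lower semicontinuity of $\Psi$ on $H^1(\Lt^2(\Omega))\times H^1(\XX_c)$ together with the weak lower semicontinuity of the $L^2(\XX_c)$-norm applied to $\boverdot\ell_n \weak \boverdot{\ell^*}$ then yields
\begin{equation*}
    j(\ell^*)=\Psi(\sigma^*,\ell^*)+\tfrac{\alpha}{2}\bignorm{\boverdot{\ell^*}}{L^2(\XX_c)}
    \leq \liminf_{n\to\infty}\Big(\Psi(\sigma_n,\ell_n)+\tfrac{\alpha}{2}\bignorm{\boverdot\ell_n}{L^2(\XX_c)}\Big)
    =\bar j.
\end{equation*}
Since $\ell^*\in H^1_0(\XX_c)$ is admissible and $j(\ell^*)\geq \bar j$ by definition of the infimum, equality holds and $(\SS(\ell^*),\ell^*)$ is a global minimizer.

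The only non-routine ingredient is the weak continuity of $\SS$, but this has already been established in \cref{cor:continuityPropertiesForTheSolutionOperator}; all remaining steps are standard direct-method arguments.
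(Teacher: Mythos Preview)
Your proof is correct and follows essentially the same approach as the paper's own argument: the direct method of the calculus of variations, relying on the coercivity of the Tikhonov term, the weak closedness of $H^1_0(\XX_c)$, the weak continuity of $\SS$ from \cref{cor:continuityPropertiesForTheSolutionOperator}, and the weak lower semicontinuity of $J$. You supply more detail (the Poincar\'e argument for boundedness, the Mazur justification for weak closedness), but the structure and the key ingredients are identical.
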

\begin{proof}
	The assertion follows from the standard direct method of the calculus of variations using the
	coercivity of the Tikhonov term in the objective with respect to $\ell$, the weakly lower semicontinuity of $J$, and the 
    weak continuity of $\SS$. Note that $H^1_0(\XX_c)$ is weakly closed due to the continuous embedding $H^1(\XX_c) \embed C(\XX_c)$.
\end{proof}

\begin{remark}
\label{rem:UCanBeDifferent}
    \cref{cor:continuityPropertiesForTheSolutionOperator} and
	\cref{thm:existenceOfOptimalSolutions} also hold when
	$H^1_0(\XX_c)$ is replaced by any other weakly closed subset of $H^1(\XX_c)$. 
    The set $H^1_0(\XX_c)$ is motivated by practical applications (as explained in the introduction) 
    and will be used in our numerical experiments in \cref{sec:numericalExperiments}.
\end{remark}

\subsection{Convergence of Global Minimizers}
\label{subsec:4.2}

Let us proceed with the approximation of global solutions to \cref{eq:stateEquation}.
Additionally to \cref{assu:standingAssumptionForSecExopt} we impose the following
assumption for the rest of this section.

\begin{assumption}[Regularization parameters]
\label{assu:regularization_parameters}
	Let $\{ (\varepsilon_n, \lambda_n) \}_{n \in \mathbb{N}} \subset \mathbb{R}^2 \setminus \{ 0 \}$
	be a sequence such that $\varepsilon_n, \lambda_n \geq 0$,
	$(\varepsilon_n, \lambda_n) \rightarrow 0$ and 
	$(\sigma_0 + \varepsilon_n \mathbb{B}(
	\mathbb{A}\sigma_0 - \mathbb{C}\symnabla\mathcal{T}(0, \mathfrak{a})))
	\in \mathcal{K}(\Omega)$, whenever $\lambda_n = 0$.
\end{assumption}

\begin{definition}[Solution operator for the regularized state equation]\label{def:solreg}
	According to \cref{cor:existenceOfASolutionToRegularizedStateEquation}, for every $(\varepsilon_n, \lambda_n)$,
	there exists a unique solution $(u_n, \sigma_n, z_n) \in H^1(\bH^1(\Omega) \times \Lt^2(\Omega) \times \Lt^2(\Omega))$
	of \cref{eq:regularizedStateEquation} with respect to
	$u_{D} = \GG\ell + \mathfrak a \in H^1(\bH^1(\Omega))$ for a given $\ell \in H^1_0(\XX_c)$. 
	We may thus define the solution operator
	\begin{align*}
		\mathcal{S}_{n} : H^1_0(\XX_c) \rightarrow H^1(\Lt^2(\Omega)),
		\largespace \ell \mapsto \sigma_n.
	\end{align*}
\end{definition}

With the regularized solution operator at hand, we define the following regularized version of \cref{eq:optprob} for a given tuple 
$(\varepsilon_n, \lambda_n)$ of regularization parameters:
\begin{equation}\tag{\mbox{P$_n$}}\label{eq:optprobN}
    \min_{\ell \in H^1_0(\XX_c)} \; J(\SS_n(\ell), \ell).
\end{equation}

\begin{definition}
\label{def:R1}
    Given the operator $\GG \in \LL(\XX, \bH^1(\Omega))$ and the solution mapping $\TT$ from \cref{eq:defT}, we define
    the linear and continuous operator
	\begin{align*}
		R \in \LL(\XX; \Lt^2(\Omega)), \quad \ell \mapsto \mathbb{C}\symnabla \mathcal{T}(0,\GG \ell).
	\end{align*}
	We denote the restriction of this operator to $\XX_c$ with the same symbol.
	Moreover, we set $\mathfrak{A} :=  \mathbb{C}\symnabla \mathcal{T}(0, \mathfrak a) \in H^1(\Lt^2(\Omega))$.
\end{definition}

\begin{proposition}[Existence of optimal solutions of the regularized problems]
\label{prop:existenceOfOptimalSolutionsOfTheRegularizedProblems}
	For every $n \in \mathbb{N}$, there exists a global solution of \cref{eq:optprobN}.
\end{proposition}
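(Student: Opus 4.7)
The plan is to apply the direct method of the calculus of variations to \eqref{eq:optprobN}, following the same pattern as in \cref{thm:existenceOfOptimalSolutions}, with the solution operator $\SS$ replaced by the regularized operator $\SS_n$ from \cref{def:solreg}. The core of the argument is therefore the weak continuity of $\SS_n : H^1_0(\XX_c) \to H^1(\Lt^2(\Omega))$, and this will be the main obstacle.

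First, I would pick a minimizing sequence $\{\ell_k\}_{k\in\N} \subset H^1_0(\XX_c)$. Since $\Psi$ is bounded from below by assumption, the Tikhonov term $\tfrac{\alpha}{2}\|\boverdot{\ell}_k\|_{L^2(\XX_c)}^2$ is bounded, and combined with $\ell_k(0)=0$ and Poincar\'e's inequality in time this gives boundedness of $\{\ell_k\}$ in $H^1(\XX_c)$. Extracting a subsequence (not relabeled), $\ell_k \weak \bar\ell$ in $H^1(\XX_c)$. The continuous embedding $H^1(\XX_c) \embed C(\XX_c)$ yields $\ell_k(0)\weak \bar\ell(0)$ and $\ell_k(T)\weak\bar\ell(T)$ in $\XX_c$, so $\bar\ell \in H^1_0(\XX_c)$, i.e., the feasible set is weakly closed.

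The decisive step is to show $\SS_n(\ell_k) \weak \SS_n(\bar\ell)$ in $H^1(\Lt^2(\Omega))$. Exploiting the compact embedding $\XX_c \embed\embed \XX$ together with the continuity of $\GG$, I obtain $\GG \ell_k \to \GG\bar\ell$ in $C(\bH^1(\Omega))$ and $\tfrac{d}{dt}\GG \ell_k \weak \tfrac{d}{dt}\GG\bar\ell$ in $L^2(\bH^1(\Omega))$, so the associated Dirichlet data $u_{D,k}:=\GG\ell_k + \mathfrak a$ satisfy conditions analogous to those in \cref{assu:standingAssumptionsForDefinitionAndExistence}(ii). Via the EVI reformulation in \cref{prop:transformIntoEVI}, the plastic strain $z_k$ associated with $\ell_k$ satisfies \cref{eq:reducedStateEquation} with data depending continuously on $u_{D,k}$. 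The a priori estimates in \cref{lem:aPrioriEstimates} (which apply uniformly in $k$ since $n$ is fixed) yield boundedness of $\{\sigma_k\}:=\{\SS_n(\ell_k)\}$ in $H^1(\Lt^2(\Omega))$ and of $\{\sqrt{\varepsilon_n}\,z_k\}$. Extracting a further subsequence, $\sigma_k \weak \bar\sigma$ in $H^1(\Lt^2(\Omega))$.

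It remains to identify $\bar\sigma = \SS_n(\bar\ell)$. When $\lambda_n = 0$, I pass to the limit in the EVI by the same argument used in \cref{thm:existenceAndApproximationOfAStronglyReducedSolution}, combining \cref{lem:partsConvergence}, \cref{sigmaSquareInequality} and the weak closedness of $\mathcal{E}(\Omega)\cap\mathcal{K}(\Omega)$; uniqueness of the regularized EVI solution then yields convergence of the whole sequence. When $\lambda_n>0$, the Lipschitz continuity of $\partial I_n$ together with the strong convergence $\GG\ell_k \to \GG\bar\ell$ in $C(\bH^1(\Omega))$ yields continuous dependence of the integral equation associated with \cref{eq:reducedStateEquation} on the data by a Gr\"onwall argument, which gives even strong convergence. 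In either case, $\SS_n(\ell_k) \weak \SS_n(\bar\ell)$. Finally, by the weak lower semicontinuity of $J$,
\begin{equation*}
    J(\SS_n(\bar\ell),\bar\ell) \leq \liminf_{k\to\infty} J(\SS_n(\ell_k),\ell_k) = \inf_{\ell \in H^1_0(\XX_c)} J(\SS_n(\ell),\ell),
\end{equation*}
so $\bar\ell$ is a global minimizer of \eqref{eq:optprobN}.
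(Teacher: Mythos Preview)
Your overall strategy coincides with the paper's: both apply the direct method and reduce the argument to the weak continuity of $\SS_n$ on $H^1_0(\XX_c)$. For $\lambda_n>0$ your Lipschitz/Gr\"onwall reasoning matches the paper's exactly.

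For $\lambda_n=0$, however, your appeal to ``the same argument as \cref{thm:existenceAndApproximationOfAStronglyReducedSolution}'' is not quite right. That theorem passes to the \emph{perfect plasticity} limit: it exploits $\varepsilon_n\to 0$ to discard the hardening contribution (the term $\tfrac{\varepsilon_n}{2}(\Bb z_n(T),z_n(T))$ is simply dropped by positivity), and the limiting object is identified via the reduced flow rule \eqref{eq:flowRuleInequality} for $\sigma$ alone, using weak closedness of $\EE(\Omega)\cap\KK(\Omega)$. Here $n$ is \emph{fixed} with $\varepsilon_n>0$, so the target is the hardening problem: you must identify the weak limit of $z_k$ (not just $\sigma_k$) as a solution of \eqref{eq:reducedStateEquation}, and the constraint is $\sigma-\varepsilon_n\Bb z\in\KK(\Omega)$, not $\sigma\in\KK(\Omega)$. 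The ingredients you list (\cref{lem:partsConvergence}, \cref{sigmaSquareInequality}, weak closedness of $\EE(\Omega)\cap\KK(\Omega)$) do not by themselves yield this.

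The paper handles this case by invoking an external weak-continuity result for EVIs with a coercive operator (\cite[Theorem~3.10]{paperAbstract}), which applies because $Q_n$ is coercive when $\varepsilon_n>0$ (\cref{prop:transformIntoEVI}). If you want a self-contained argument, the fix is: since $\varepsilon_n>0$ is fixed, the a priori bound on $\sqrt{\varepsilon_n}\,z_k$ gives boundedness of $z_k$ itself in $H^1(\Lt^2(\Omega))$; extract $z_k\rightharpoonup\bar z$, and run a Minty-type argument on the $z$-EVI using that $Q_n$ is self-adjoint and coercive, so that $(\boverdot z_k,Q_n z_k)_{L^2(\Lt^2(\Omega))}=\tfrac12\|z_k(T)\|_{Q_n}^2-\tfrac12\|z_0\|_{Q_n}^2$ is weakly lower semicontinuous. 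This, together with the strong convergence $R\ell_k\to R\bar\ell$ coming from the compact embedding, lets you pass to the limit in the subdifferential inclusion and identify $\bar z$ as the unique solution for $\bar\ell$.
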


\begin{proof}
	Using \cref{prop:transformIntoEVI} and the definition of $R$ one obtains that
	$(u_n, \sigma_n, z_n) \in H^1(\bH^1(\Omega) \times \Lt^2(\Omega) \times \Lt^2(\Omega))$ is a solution
	of \cref{eq:regularizedStateEquation} with respect to $u_{D} = \GG\ell + \mathfrak a$ with $\ell \in H^1_0(\XX_c)$, 
	if and only if $z_n$ is a solution of
	\begin{align}
	\label{eq:reducedStateEquation2}
		\boverdot{z}_n \in \partial I_{n}(R\ell + \mathfrak{A}  - Q_n z_n),	\mediumspace z_n(0) =
		\symnabla \mathfrak a(0) - \mathbb{A}\sigma_0
	\end{align}
    (where $Q_n$ is as defined in \cref{def:EVIOperators}) and $u_n$ and $\sigma_n$ are determined through $z_n$ via
    \begin{equation}\label{eq:unsigman}
	    u_n = \mathcal{T}(-\div(\mathbb{C} z_n), \GG\ell+\mathfrak a)
        \quad\text{and} \quad	    
	    \sigma_n = \mathbb{C} (\symnabla u_n - z_n).
    \end{equation}        
    Note that $\ell\in H^1_0(\XX_c)$ implies $\ell(0) = 0$, which leads to the initial condition in \cref{eq:reducedStateEquation2},
    and that
    $R\ell(0) + \mathfrak{A}(0) - Q_n z_n(0) = \sigma_0
    + \varepsilon_n \mathbb{B}(\mathbb{A}\sigma_0 - \mathfrak{A}(0)) \in \KK(\Omega)$,
    according to \cref{assu:regularization_parameters}.	
	We next show the weak continuity of the solution operator of \cref{eq:reducedStateEquation2}, denoted by $\SS^{(z)}_n$, 
	as a mapping from $H^1(\XX_c)$ to $H^1(\Lt^2(\Omega))$.
    In case of $\lambda_n = 0$ (and thus $\varepsilon_n > 0$), \cref{eq:reducedStateEquation2} corresponds to an 
    evolution variational inequality with a maximal monotone operator as for instance discussed in \cite[section~3]{paperAbstract}. 
    The continuity properties thereof are stated in \cite[Theorem~3.10]{paperAbstract}. 
    Since in particular $Q_n$ is coercive when $\varepsilon_n > 0$ as shown in \cref{prop:transformIntoEVI}, 
    all assumptions of this theorem are fulfilled except for the offset $\mathfrak{A}$, which is zero in \cite{paperAbstract}.
    It is however easily seen that this does not affect the underlying analysis such that this continuity result
    together with the compact embedding of $H^1(\XX_c)$ in $L^1(\XX)$ yields the desired weak continuity of $\SS^{(z)}_n$.
 
    If $\lambda_n > 0$, then $\partial I_n$ is a Lipschitz continuous mapping from $\Lt^2(\Omega)$ to $\Lt^2(\Omega)$, which, together with Gronwall's 
    inequality, gives the Lipschitz continuity of the solution mapping of \cref{eq:reducedStateEquation2} from $L^2(\XX)$ to $H^1(\Lt^2(\Omega))$, cf.~\cite[proof of Proposition~4.4]{paperAbstract}. 
    Together with the compactness of $H^1(\XX_c) \embed L^2(\XX)$, this yields the weak continuity of $\SS^{(z)}_n$ in this case.

    Since all operators in \cref{eq:unsigman} are linear (resp.\ affine) and continuous in their respective spaces, 
    the weak continuity of $\SS_n^{(z)}$ carries over to solution mapping $\SS_n$ from \cref{def:solreg}. 
 	Now the assertion can be proven analogously to the proof of \cref{thm:existenceOfOptimalSolutions} by means 
 	of the standard direct method of the calculus of variations.
\end{proof}

\begin{proposition}[Approximation properties of the solution operators]
\label{prop:continuityPropertiesOfTheSolutionOperators}
	The following two properties hold:
	\begin{itemize}
	\item[(i)] $\ell_n \rightharpoonup \ell$ in $H^1_0(\XX_c)$ \mediumspace $\Longrightarrow$ 
	\mediumspace $\mathcal{S}_n(\ell_n) \rightharpoonup \mathcal{S}(\ell)$ in $H^1(\Lt^2(\Omega))$,
	\item[(ii)] $\ell_n \rightarrow \ell$ in $H^1_0(\XX_c)$ \mediumspace $\Longrightarrow$ 
	\mediumspace $\mathcal{S}_n(\ell_n) \rightarrow \mathcal{S}(\ell)$ in $H^1(\Lt^2(\Omega))$.
	\end{itemize}
\end{proposition}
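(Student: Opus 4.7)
The plan is to deduce the statement directly from \cref{thm:existenceAndApproximationOfAStronglyReducedSolution} by setting $u_{D,n} := \GG \ell_n + \mathfrak{a}$ and $u_D := \GG \ell + \mathfrak{a}$, and verifying that these Dirichlet data satisfy the hypotheses of \cref{assu:standingAssumptionsForDefinitionAndExistence}(ii). The argument is essentially the same one used in the proof of \cref{cor:continuityPropertiesForTheSolutionOperator}, but now combined with the regularization convergence rather than the pure continuity result. Note that $\SS_n(\ell_n)$ coincides, by construction, with the stress component of the regularized solution associated with $u_{D,n}$, and $\SS(\ell)$ coincides with the reduced solution associated with $u_D$, so a convergence statement on the Dirichlet data level transfers directly to the stresses.

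For part (i), assume $\ell_n \weak \ell$ in $H^1_0(\XX_c)$. Since $\GG \in \LL(\XX, \bH^1(\Omega))$ restricts to a linear continuous operator from $\XX_c$ to $\bH^1(\Omega)$, it induces a linear continuous map $\GG : H^1(\XX_c) \to H^1(\bH^1(\Omega))$, yielding $u_{D,n} \weak u_D$ in $H^1(\bH^1(\Omega))$. By the compact embedding $\XX_c \embed\embed \XX$, a standard Aubin-Lions type argument gives $H^1(\XX_c) \embed\embed C(\XX)$, hence $\ell_n \to \ell$ in $C(\XX)$, and consequently $\GG \ell_n \to \GG \ell$ in $C(\bH^1(\Omega))$. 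This immediately implies $u_{D,n} \to u_D$ in $L^2(\bH^1(\Omega))$ as well as $u_{D,n}(T) \to u_D(T)$ in $\bH^1(\Omega)$. Thus all three conditions of \cref{assu:standingAssumptionsForDefinitionAndExistence}(ii) hold, and \cref{thm:existenceAndApproximationOfAStronglyReducedSolution} delivers $\SS_n(\ell_n) = \sigma_n \weak \sigma = \SS(\ell)$ in $H^1(\Lt^2(\Omega))$.

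For part (ii), strong convergence $\ell_n \to \ell$ in $H^1_0(\XX_c)$ together with the continuity of the induced map $\GG : H^1(\XX_c) \to H^1(\bH^1(\Omega))$ immediately yields $u_{D,n} \to u_D$ in $H^1(\bH^1(\Omega))$. The second part of \cref{thm:existenceAndApproximationOfAStronglyReducedSolution} then upgrades the weak convergence from part (i) to strong convergence $\SS_n(\ell_n) \to \SS(\ell)$ in $H^1(\Lt^2(\Omega))$.

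There is no real obstacle here since all the work has been packaged into \cref{thm:existenceAndApproximationOfAStronglyReducedSolution}; the only point that requires a moment of care is the Aubin-Lions embedding $H^1(\XX_c) \embed\embed C(\XX)$ used to secure the pointwise convergence at $t=T$ and the $L^2$-in-time convergence, but this is a standard consequence of the compact embedding $\XX_c \embed\embed \XX$ assumed in \cref{sec:2}.
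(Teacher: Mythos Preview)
Your proof is correct and follows essentially the same approach as the paper: the paper's proof simply states that it is the same as the proof of \cref{cor:continuityPropertiesForTheSolutionOperator}, except that \cref{thm:existenceAndApproximationOfAStronglyReducedSolution} is invoked instead of \cref{prop:continuityPropertiesOfStronglyReducedSolutions}. You have spelled out precisely this argument, including the compact embedding $H^1(\XX_c)\hookrightarrow C(\XX)$ used to verify \cref{assu:standingAssumptionsForDefinitionAndExistence}(ii).
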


\begin{proof}
	The proof is the same as the proof of \cref{cor:continuityPropertiesForTheSolutionOperator},
	except that we employ \cref{thm:existenceAndApproximationOfAStronglyReducedSolution}
	instead of \cref{prop:continuityPropertiesOfStronglyReducedSolutions}.
\end{proof}

\begin{theorem}[Approximation of global minimizers]
\label{thm:approximationOfGlobalMinimizers}
	Let $\sequence{\ell}{n}$ be a sequence of global minimizers of \cref{eq:optprobN}.
	Then every weak accumulation point of  $\sequence{\ell}{n}$ is a strong accumulation point
	and a global minimizer of \cref{eq:optprob}. Moreover, there exists an accumulation point.
\end{theorem}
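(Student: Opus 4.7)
The plan is to follow the classical approximation-of-minimizers scheme for regularized optimal control problems, exploiting the weak/strong continuity results of \cref{prop:continuityPropertiesOfTheSolutionOperators}.

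First, I would establish boundedness of the sequence $\sequence{\ell}{n}$ in $H^1_0(\XX_c)$. To this end, fix an arbitrary comparison control $\tilde\ell \in H^1_0(\XX_c)$ (for instance $\tilde\ell \equiv 0$). Since $\ell_n$ is a global minimizer of \cref{eq:optprobN}, we have $J(\SS_n(\ell_n),\ell_n) \leq J(\SS_n(\tilde\ell),\tilde\ell)$. By \cref{prop:continuityPropertiesOfTheSolutionOperators}(ii), the right-hand side converges to $J(\SS(\tilde\ell),\tilde\ell)$ and is thus bounded. Combined with the lower bound on $\Psi$, this yields boundedness of $\norm{\boverdot{\ell}_n}{L^2(\XX_c)}$, and since $\ell_n(0)=0$, a Poincaré-type inequality in time gives boundedness of $\ell_n$ in $H^1_0(\XX_c)$. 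By reflexivity, a weak accumulation point $\ell^*$ exists, proving the last claim of the theorem.

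Next, let $\sequence{\ell}{n_k}$ be a subsequence with $\ell_{n_k} \weak \ell^*$ in $H^1_0(\XX_c)$ (the weak closedness of $H^1_0(\XX_c)$ ensures $\ell^*\in H^1_0(\XX_c)$). By \cref{prop:continuityPropertiesOfTheSolutionOperators}(i), $\SS_{n_k}(\ell_{n_k}) \weak \SS(\ell^*)$ in $H^1(\Lt^2(\Omega))$. For an arbitrary test control $\tilde\ell\in H^1_0(\XX_c)$, global optimality and \cref{prop:continuityPropertiesOfTheSolutionOperators}(ii) yield
\begin{equation*}
    J(\SS(\ell^*),\ell^*) \;\leq\; \liminf_{k\to\infty} J(\SS_{n_k}(\ell_{n_k}),\ell_{n_k})
    \;\leq\; \limsup_{k\to\infty} J(\SS_{n_k}(\tilde\ell),\tilde\ell) \;=\; J(\SS(\tilde\ell),\tilde\ell),
\end{equation*}
using the weak lower semicontinuity of $\Psi$ and of the norm squared on the left, and the continuity of $\Psi$ together with strong continuity of $\SS_n(\tilde\ell) \to \SS(\tilde\ell)$ on the right. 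Thus $\ell^*$ is a global minimizer of \cref{eq:optprob}.

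The step I expect to require the most care is the upgrade from weak to strong convergence. Choosing $\tilde\ell = \ell^*$ in the inequality above produces $J(\SS_{n_k}(\ell_{n_k}),\ell_{n_k}) \to J(\SS(\ell^*),\ell^*)$. Since both $\Psi$ and $\frac{\alpha}{2}\norm{\boverdot{\cdot}}{L^2(\XX_c)}^2$ are weakly lower semicontinuous along the sequence $(\SS_{n_k}(\ell_{n_k}),\ell_{n_k})$ and their sum converges, a standard argument forces each summand to converge separately; in particular $\norm{\boverdot{\ell}_{n_k}}{L^2(\XX_c)} \to \norm{\boverdot{\ell}^*}{L^2(\XX_c)}$. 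Combined with the weak convergence $\boverdot{\ell}_{n_k}\weak\boverdot{\ell}^*$ in the Hilbert space $L^2(\XX_c)$, this gives strong convergence of the derivatives, and hence, by $\ell_{n_k}(0)=0$ and Poincaré's inequality, strong convergence $\ell_{n_k}\to\ell^*$ in $H^1_0(\XX_c)$. A final application of \cref{prop:continuityPropertiesOfTheSolutionOperators}(ii) then also gives $\SS_{n_k}(\ell_{n_k})\to\SS(\ell^*)$ in $H^1(\Lt^2(\Omega))$, completing the proof.
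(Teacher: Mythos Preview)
Your proposal is correct and follows essentially the same route as the paper's proof: boundedness via the Tikhonov term and the lower bound on $\Psi$, weak subsequential convergence, \cref{prop:continuityPropertiesOfTheSolutionOperators}(i) plus weak lower semicontinuity for global optimality, the sandwich argument with $\tilde\ell=\ell^*$ and \cref{prop:continuityPropertiesOfTheSolutionOperators}(ii) for convergence of the objective, and finally the splitting into two weakly lower semicontinuous summands to extract norm convergence of $\boverdot\ell_{n_k}$. Your treatment of the boundedness step (fixing a comparison control and invoking \cref{prop:continuityPropertiesOfTheSolutionOperators}(ii) to bound the right-hand side) is in fact slightly more explicit than the paper's sketch.
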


\begin{proof}
	The proof follows standard arguments using the continuity properties in
	\cref{prop:continuityPropertiesOfTheSolutionOperators}. 
	Let us nonetheless shortly sketch the proof for convenience of the reader. 
	Since $\Psi$ is bounded from below by our standing assumptions, the Tikhonov term in the objective together with the 
	constraints in $H^1_0(\XX_c)$ imply that the sequence $\{\ell_n\}$ is bounded in $H^1_0(\XX_c)$. Since $\XX_c$ is assumed to be 
	a Hilbert space, there exists a weakly converging subsequence with weak limit $\overline{\ell} \in H^1_0(\XX_c)$.
	Due to \cref{prop:continuityPropertiesOfTheSolutionOperators}(i), 	
	the associated states $\SS_n(\ell_n)$ converge weakly to 
	the reduced solution $\overline{\sigma} := \SS(\overline{\ell})$, and the weak lower semicontinuity of the objective 
	ensures the global optimality of $(\overline{\sigma}, \overline{\ell})$. 
	
	From \cref{prop:continuityPropertiesOfTheSolutionOperators}(ii), we moreover deduce that 
	$\SS_n(\overline\ell) \to \overline\sigma$ in $H^1(\Lt^2(\Omega))$ such that the continuity of $\Psi$ implies 
	\begin{equation*}
	    J(\overline{\sigma}, \overline{\ell}) 
	    \leq \liminf_{n\to\infty} J(\SS_n(\ell_n), \ell_n)
	    \leq \limsup_{n\to\infty} J(\SS_n(\ell_n), \ell_n)
	    \leq \limsup_{n\to\infty} J(\SS_n(\overline\ell), \overline\ell) = J(\overline{\sigma}, \overline{\ell}),
	\end{equation*}
	i.e., the convergence of the objective. Since both components of the objective are weakly lower semicontinuous, 
	we obtain $\|\boverdot{\ell_n}\|_{L^2(\XX_c)} \to \|\boverdot{\overline{\ell}}\|_{L^2(\XX_c)}$, 
	which in turn implies strong convergence. 
	
	As the above reasoning applies to every weakly convergent subsequence, we deduce that every weak accumulation 
	point is actually a strong one and a global minimizer of \cref{eq:optprob}, which completes the proof.
\end{proof}

\section{Optimality Conditions}
\label{sec:optimalityConditions}

Unfortunately, the Yosida regularization does in general not yield a G\^ateaux-differentiable control-to-state mapping.
We will demonstrate this for a particular case of the set of admissible stresses below. 
Therefore, in order to derive an optimality system by the standard adjoint calculus, a further smoothing is necessary, 
which will be addressed next.

\subsection{Differentiability of the Regularized Control-to-State Mapping}
\label{subsec:5.1}

We consider now the regularized system \cref{eq:regularizedStateEquation}
for a fixed $n \in \mathbb{N}$ and set $(\varepsilon, \lambda) := (\varepsilon_n, \lambda_n)$.
Accordingly, we also abbreviate $Q := Q_n$ (see \cref{def:EVIOperators}).

For the construction of the smoothing of the Yosida regularization and its differentiability properties, 
we impose the following assumption for the rest of this section:

\begin{assumption}[Smoothing of the Yosida regularization]
\label{assu:optimalityConditions}
    \begin{itemize}
        \item[(i)]	 We fix $p \in (2, \overline{p}]$ in \cref{lem:w1sExistence}.
        \item[(ii)] The operator $\GG$ is linear and continuous from $\XX_c$ to $\bW^{1,p}(\Omega)$ 
        and the Dirichlet-offset satisfies $\mathfrak a\in H^1(\bW^{1,p}(\Omega))$.
        \item[(iii)] We assume $\lambda > 0$ (note that $\varepsilon = 0$ is possible).
        \item[(iv)] The set $K$ from \cref{def:admissibleStresses} is given in terms of  the von Mises yield condition, i.e., 
        \begin{equation}\label{eq:vonmises}
            K := \{ \tau \in \Rnns : |\tau^D |_F \leq \gamma \},
        \end{equation}
        where $\tau^D := \tau - \frac{1}{n} \operatorname{tr}(\tau)I$ is the deviator of $\tau \in \Rnns$, 
        $\gamma>0$ denotes the initial uniaxial yield stress, and $|\cdot|_F$ is the Frobenius norm.
    \end{itemize}
\end{assumption}

A straightforward calculations shows that, in case of the von Mises yield condition, 
the Yosida-approximation of $\partial I_{\mathcal{K}(\Omega)}$ is given by
\begin{align*}
	\partial I_\lambda(\tau) = \frac{1}{\lambda} \operatorname{max}\Big{ \{ } 0, 1 - \frac{\gamma}{| \tau^D |_F} 
	\Big{ \} } \tau^D,
\end{align*}
cf.~e.g.~\cite{hermey11}.
Herein, with a slight abuse of notation, we denote the Nemyzki operator in $L^\infty(\Omega)$
associated with the pointwise maximum, i.e., $\R \ni r \mapsto \max\{0,r\} \in \R$, by the same symbol. 
In addition, we set $\operatorname{max}\{0, 1 - \gamma/r\} := 0$, if $r = 0$. As indicated above, we
indeed observe that $\partial I_\lambda$ is still a non-smooth mapping, giving in turn that the associated solution operator 
of the regularized state equation is not G\^ateaux-differentiable. 
We therefore additionally smoothen the Yosida-approximation to obtain a differentiable mapping:
\begin{equation}\label{eq:maxsmooth}
	A_\delta : \Lt^2(\Omega) \rightarrow \Lt^2(\Omega), \largespace \tau \mapsto
	\frac{1}{\lambda} \operatorname{max}_\delta\Big{(}1 - \frac{\gamma}{| \tau^D |_F}\Big{)} \tau^D,
\end{equation}
where
\begin{equation*}
        \operatorname{max}_\delta : \mathbb{R} \rightarrow \mathbb{R} \largespace r \mapsto 
        \begin{cases}
            \max\{0,r\}, & |r| \geq \delta, \\     
            \tfrac{1}{4\delta} (r+\delta)^2,  & |r| < \delta.
        \end{cases}                
\end{equation*}
for a fixed $\delta \in (0,1)$. 
Again, we denote the Nemyzki operator associated with $\maxs_\delta$ by the same symbol.
One easily checks that $\operatorname{max}_\delta \in C^1(\mathbb{R})$ and that
\begin{align}
\label{eq:smoothYosidaEstimate}
	\norm{A_\delta(\tau) - \partial I_\lambda(\tau)}{\Lt^2(\Omega)} \leq \frac{|\Omega|\gamma \delta}{4\lambda(1 - \delta)}
\end{align}
for all $\tau \in \Lt^2(\Omega)$.
Furthermore,
we denote the restriction of $A_\delta$ to $\Lt^p(\Omega)$ by the same symbol.

Let us now turn to the smoothed state equation and the associated optimization problem.
The smoothed state equation reads
\begin{subequations}\label{eq:smoothedStateEquation}
\begin{alignat}{3}
    -\div \sigma(t) &= 0 &\quad  & \text{in } \bW^{-1,p}_D(\Omega),\\ 
    \sigma(t) &= \mathbb{C} (\symnabla u(t) - z(t)) & & \text{in }\Lt^p(\Omega),\label{eq:smoothedStateEquationB} \\
	\boverdot{z}(t) &= A_\delta(\sigma(t)	- \varepsilon \mathbb{B}z(t)) &&\text{in } \Lt^p(\Omega), 
	\label{eq:zodesmooth}\\
	u(t) - u_{D}(t) &\in \bW^{1,p}_D(\Omega), \\
	(u, \sigma)(0) &= (u_{D}(0), \sigma_0) & & \text{in } \bW^{1,p}(\Omega)\times \Lt^p(\Omega).
\end{alignat}    
\end{subequations}
As in the proofs of \cref{prop:existenceOfOptimalSolutionsOfTheRegularizedProblems}
resp.~\cref{prop:transformIntoEVI}, in the case $u_D = \GG\ell + \mathfrak a$, this system can equivalently be transformed into
\begin{subequations}\label{eq:smoothedTransformedStateEquation}
\begin{alignat}{3}
	\boverdot{z} &= A_\delta(R\ell + \mathfrak{A} - Q z), & 
	\mediumspace z(0) &= \symnabla \mathfrak a(0) -\mathbb{A}\sigma_0,
	\label{eq:smoothedTransformedStateEquationA} \\
	 u &= \mathcal{T}(-\div(\mathbb{C} z), \GG\ell+\mathfrak a),  & 
   \sigma &= \mathbb{C} (\symnabla u - z), \label{eq:smoothedTransformedStateEquationB} 
\end{alignat}
\end{subequations}
where $Q$, $R$, and $\mathfrak{A}$ are defined as in Definition \cref{def:EVIOperators} and \cref{def:R1}.
Again, we used $\ell \in H^1_0(\XX_c)$ implying $\ell(0) = 0$ for the initial condition in 
\cref{eq:smoothedTransformedStateEquationA}.
As in case of the Yosida regularization in \cref{cor:existenceOfASolutionToRegularizedStateEquation}, 
the existence of solutions to \cref{eq:smoothedTransformedStateEquation} can again be deduced from
Banach's fixed point theorem owing to the global Lipschitz continuity of $A_\delta$. 
This time, we consider the fixed point mapping associated with the integral equation 
corresponding to \cref{eq:smoothedTransformedStateEquationA} as a mapping in $L^2(0,T;\Lt^p(\Omega))$. 
Note in this context that, by virtue of  \cref{assu:optimalityConditions}(ii) and 
\cref{lem:w1sExistence}, $Q$ and $R$ are mappings from $\Lt^p(\Omega)$ and $\XX_c$, respectively, to $\Lt^p(\Omega)$ 
and $\mathfrak{A} \in H^1(\Lt^p(\Omega))$. This gives rise to the following

\begin{definition}[Smoothed solution operator]
\label{def:smoothedSolutionOperator}
	For $\ell \in H^1_0(\XX_c)$ there exists a unique solution $(u,\sigma,z)$ of
	\cref{eq:smoothedStateEquation} with respect to $u_D = \GG\ell + \mathfrak a$.
	We denote the  associated solution operator by
	\begin{align*}
		\mathcal{S}_\delta : H^1_0(\XX_c)\rightarrow
		H^1(\Lt^p(\Omega)) \largespace
		\ell \mapsto \sigma.
	\end{align*}
	Of course, this operator also depends on $\lambda$ and $\varepsilon$,
	but we suppress this dependency to ease notation.
\end{definition}


Given $\SS_\delta$, the smoothed optimal control problem reads as follows:
\begin{equation}\tag{\mbox{P$_\delta$}}\label{eq:optprobS}
    \min_{\ell \in H^1_0(\XX_c)} \; J(\SS_\delta(\ell), \ell).
\end{equation}
The existence of optimal solution to \cref{eq:optprobS} follows form standard arguments completely analogous 
to \cref{prop:existenceOfOptimalSolutionsOfTheRegularizedProblems}.
Let us shortly interrupt the derivation of optimality conditions for \cref{eq:optprobS} in order to briefly 
address the convergence of global minimizers.

\begin{proposition}\label{prop:smoothapprox}
    Let $\{\lambda_n\} \subset \R^+\setminus\{0\}$ be a sequence converging to zero and assume for 
    simplicity that $\varepsilon_n = 0$ for all $n\in \N$. 
    Suppose moreover that the smoothing parameter $\delta_n$ is chosen such that 
    \begin{equation}\label{eq:ldcoupling}
        \delta_n = \delta(\lambda_n) = o\Big(\lambda_n^2\exp\big(- \tfrac{T\|Q\|_{\LL(\Lt^2(\Omega))}}{\lambda_n}\big)\Big).
    \end{equation}
    Let $\{\ell_n\}$ denote a sequence of solutions of \cref{eq:optprobS} with $\lambda = \lambda_n$
    and $\delta = \delta_n$. Then  every weak accumulation point is actually a strong one 
    and a minimizer of \cref{eq:optprob}. In addition, there is an accumulation point.
\end{proposition}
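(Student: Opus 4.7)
The plan is to reduce \cref{prop:smoothapprox} to \cref{thm:approximationOfGlobalMinimizers} by showing that, under the coupling \eqref{eq:ldcoupling}, the smoothed solution operator $\SS_{\delta_n}$ (with $\lambda = \lambda_n$) is asymptotically equivalent, in the $H^1(\Lt^2(\Omega))$-topology and uniformly on bounded subsets of $H^1_0(\XX_c)$, to the Yosida solution operator $\SS_n$ from \cref{def:solreg}. Once this equivalence is in place, the entire approximation machinery of \cref{thm:approximationOfGlobalMinimizers} transfers almost verbatim.

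First I would establish boundedness of $\{\ell_n\}$ in $H^1_0(\XX_c)$ by using $\ell \equiv 0$ as a competitor: optimality of $\ell_n$ together with the lower bound on $\Psi$ yields a uniform bound on the Tikhonov term $\|\boverdot{\ell_n}\|_{L^2(\XX_c)}$, which, combined with $\ell_n(0) = 0$, bounds $\|\ell_n\|_{H^1(\XX_c)}$. Extracting a subsequence gives $\ell_n \rightharpoonup \bar\ell$ for some $\bar\ell \in H^1_0(\XX_c)$.

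The technical heart of the argument is the uniform comparison. Fix $\ell$ in a bounded set of $H^1_0(\XX_c)$ and let $z^{\delta,\lambda}$ and $z^{\lambda}$ solve \eqref{eq:smoothedTransformedStateEquationA} and the analogous Yosida equation \eqref{eq:reducedStateEquation2} (for $\varepsilon = 0$), respectively, with the same initial datum. Their difference satisfies
\begin{equation*}
    \boverdot{z}^{\delta,\lambda} - \boverdot{z}^{\lambda}
    = \bigl(A_{\delta} - \partial I_{\lambda}\bigr)(R\ell + \mathfrak{A} - Q z^{\delta,\lambda})
    + \partial I_{\lambda}(R\ell + \mathfrak{A} - Q z^{\delta,\lambda}) - \partial I_{\lambda}(R\ell + \mathfrak{A} - Q z^{\lambda}).
\end{equation*}
The first term is bounded uniformly by $|\Omega|\gamma\delta/(4\lambda(1-\delta))$ via \eqref{eq:smoothYosidaEstimate}, and the second by $(\|Q\|_{\LL(\Lt^2(\Omega))}/\lambda)\|z^{\delta,\lambda}-z^{\lambda}\|_{\Lt^2(\Omega)}$ using the Lipschitz constant $1/\lambda$ of $\partial I_\lambda$. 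Gronwall's inequality then yields
\begin{equation*}
    \|z^{\delta,\lambda} - z^{\lambda}\|_{C(\Lt^2(\Omega))}
    \le C\,\tfrac{\delta}{\lambda}\exp\Bigl(\tfrac{T\|Q\|_{\LL(\Lt^2(\Omega))}}{\lambda}\Bigr),
\end{equation*}
and feeding this back into the $\boverdot z$-equation produces an $L^2$-bound of order $\delta/\lambda^2\exp(T\|Q\|_{\LL(\Lt^2(\Omega))}/\lambda)$ on the time derivatives. The coupling \eqref{eq:ldcoupling} is tailored precisely so that both quantities vanish as $n\to\infty$. Since $z$ determines $\sigma$ through a continuous affine map (cf.~\eqref{eq:unsigman} and \eqref{eq:smoothedTransformedStateEquationB}), this gives $\SS_{\delta_n}(\ell) - \SS_n(\ell) \to 0$ in $H^1(\Lt^2(\Omega))$ at a rate independent of $\ell$ on the bounded set.

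The conclusion then follows exactly as in \cref{thm:approximationOfGlobalMinimizers}. Applied along $\ell = \ell_n$, the comparison and \cref{prop:continuityPropertiesOfTheSolutionOperators}(i) give $\SS_{\delta_n}(\ell_n) \rightharpoonup \SS(\bar\ell)$; along $\ell = \bar\ell$, combined with part~(ii) of the same proposition, it gives $\SS_{\delta_n}(\bar\ell) \to \SS(\bar\ell)$ strongly in $H^1(\Lt^2(\Omega))$. Weak lower semicontinuity of $J$, optimality of $\ell_n$, and continuity of $\Psi$ then close the standard sandwich
\begin{equation*}
    J(\SS(\bar\ell),\bar\ell) \le \liminf_{n\to\infty} J(\SS_{\delta_n}(\ell_n),\ell_n)
    \le \limsup_{n\to\infty} J(\SS_{\delta_n}(\bar\ell),\bar\ell) = J(\SS(\bar\ell),\bar\ell),
\end{equation*}
identifying $\bar\ell$ as a global minimizer of \cref{eq:optprob} and, via convergence of the objective, upgrading the weak convergence of $\boverdot{\ell_n}$ to strong convergence in $L^2(\XX_c)$. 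The principal obstacle is the Gronwall step: the factor $1/\lambda$ in the Lipschitz constant of $\partial I_\lambda$ is precisely what forces the exponential coupling between $\delta_n$ and $\lambda_n$ in \eqref{eq:ldcoupling}, and weakening the coupling would require a sharper propagation estimate that exploits the specific structure of the von Mises projection.
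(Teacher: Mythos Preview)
Your proposal is correct and follows essentially the same approach as the paper: both estimate $z^{\delta,\lambda}-z^{\lambda}$ via \eqref{eq:smoothYosidaEstimate} and the $1/\lambda$-Lipschitz constant of $\partial I_\lambda$, apply Gronwall to obtain a bound that is independent of $\ell$ (the paper makes this independence explicit, whereas you phrase it as ``uniform on bounded sets,'' which is slightly weaker but harmless here), and then invoke \cref{prop:continuityPropertiesOfTheSolutionOperators} together with the argument of \cref{thm:approximationOfGlobalMinimizers}. Your presentation is a touch more detailed in spelling out the sandwich inequality and the competitor $\ell\equiv 0$, but the underlying ideas coincide.
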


\begin{proof} In principle, we only need to estimate the difference in the solution of \cref{eq:regularizedStateEquation} and 
\cref{eq:smoothedStateEquation}. For this purpose, we use the equivalent formulations in \cref{eq:reducedStateEquation}
and \cref{eq:smoothedTransformedStateEquation} to see that \cref{eq:smoothYosidaEstimate} gives
    \begin{equation*}
    \begin{aligned}
        \|\boverdot{z}_\lambda(t) - \boverdot{z}_\delta(t)\|_{\Lt^2(\Omega)}
        & \leq \|\partial I_\lambda(R\ell(t) + \mathfrak{A} - Q(z_\delta(t))) - A_\delta(R\ell(t) + \mathfrak{A} - Q(z_\delta(t)))\|_{\Lt^2(\Omega)} \\
        & \quad + \|\partial I_\lambda(R\ell(t) + \mathfrak{A} - Q(z_\delta(t))) - \partial I_\lambda(R\ell(t) + \mathfrak{A} - Q(z_\lambda(t)))\|_{\Lt^2(\Omega)}\\
        & \leq  \frac{|\Omega|\gamma \delta}{4\lambda(1 - \delta)} 
        + \frac{1}{\lambda}\, \|Q\|_{\LL(\Lt^2(\Omega))} \|z_\delta(t) - z_\lambda(t)\|_{\Lt^2(\Omega)}
    \end{aligned}
    \end{equation*}
    such that Gronwall's inequality in turn  implies
    \begin{equation}
         \|\boverdot{z}_\lambda(t) - \boverdot{z}_\delta(t)\|_{\Lt^2(\Omega)} 
         \leq \Bigg( \frac{\|Q\|_{\LL(\Lt^2(\Omega))}}{\lambda} T \exp\Big(\frac{\|Q\|_{\LL(\Lt^2(\Omega))}}{\lambda}\,T\Big)+1\Bigg)
         \frac{|\Omega|\gamma \delta}{4\lambda(1 - \delta)} .
    \end{equation}
    We observe that the error induced by the additional smoothing is independent of the control $\ell$. Therefore, 
    if $\lambda$ and $\delta$ are coupled as indicated in \cref{eq:ldcoupling}, then the 
    convergence results from \cref{prop:continuityPropertiesOfTheSolutionOperators} readily carry over to 
    the solution operator with additional smoothing and we can use exactly the same arguments as in the proof 
    of \cref{thm:approximationOfGlobalMinimizers} to establish the claim.
\end{proof}

\begin{remark}
    The above proof is completely along the lines of \cite[Sections~4.2 and 7.4]{paperAbstract}, 
    but we have briefly presented it for convenience of the reader. We underline that we do not claim 
    that the coupling of $\lambda$ and $\delta$ in \cref{eq:ldcoupling} is optimal. 
\end{remark}

The next lemma covers the differentiability of $A_\delta$. Although the function $\maxs_\delta$ slightly differs from the one
in \cite[Section~7.4]{paperAbstract}, it is straight forward to transfer the analysis thereof to our setting giving the following

\begin{lemma}[Differentiability of $A_s$, {\cite[Lemma 7.24 \& Corollary 7.25]{paperAbstract}}]
\label{lem:propertiesAs}
	The operator $A_\delta$ is continuously Fr\'echet differentiable from $\Lt^p(\Omega)$ to $\Lt^2(\Omega)$
	and its directional derivative at $\tau \in \Lt^p(\Omega)$ in direction $h \in \Lt^2(\Omega)$ is given by
	\begin{equation*}
        		A_\delta'(\tau)h
		= \frac{1}{\lambda} \maxs_\delta'\Big(1 - \frac{\gamma }{|\tau^D|_F}\Big) \frac{\gamma }{|\tau^D|_F^3}
    		(\tau^D \colon h^D) \tau^D + \frac{1}{\lambda} \maxs_\delta\Big(1 - \frac{\gamma }{|\tau^D|_F}\Big) h^D.
    	\end{equation*}
    Moreover, for every $\tau\in \Lt^p(\Omega)$, $A_\delta'(\tau)$ can be extended to an operator in $\LL(\Lt^2(\Omega);\Lt^2(\Omega))$, which	is self-adjoint and satisfies 
    $\|A_\delta'(\tau)\|_{\LL(\Lt^2(\Omega))} \leq C$ with a constant independent of $\tau$.
\end{lemma}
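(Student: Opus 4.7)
The plan is to view $A_\delta$ as a Nemytskii operator generated by the function
$f : \Rnns \to \Rnns$, $f(\tau) := \frac{1}{\lambda} \maxs_\delta\bigl(1 - \gamma/|\tau^D|_F\bigr)\tau^D$
(with $f(\tau) = 0$ if $\tau^D = 0$), establish that $f \in C^1(\Rnns;\Rnns)$ with a uniformly bounded and uniformly continuous derivative, and then lift these properties to the operator level via the norm gap $p > 2$. The crucial structural observation is that $\maxs_\delta$ vanishes identically on $(-\infty,-\delta]$, so that $f$ is identically zero on the set $\{\tau \in \Rnns : |\tau^D|_F \leq \gamma/(1+\delta)\}$. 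This cuts off any neighborhood of the singularity at $\tau^D = 0$, so the apparently dangerous factor $\tau^D/|\tau^D|_F^3$ only appears where $|\tau^D|_F$ is bounded away from $0$.

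On the open set $\{|\tau^D|_F > \gamma/(1+\delta)\}$, the chain rule applied to the composition of $\maxs_\delta$, $r \mapsto 1 - \gamma/r$, and $\tau \mapsto |\tau^D|_F = \sqrt{\tau^D : \tau^D}$, together with the product rule against $\tau^D$, yields exactly the stated formula for $f'(\tau)$. Across the transition surface $|\tau^D|_F = \gamma/(1+\delta)$, both $\maxs_\delta(-\delta) = 0$ and $\maxs_\delta'(-\delta) = 0$, so each of the two terms in the formula extends continuously to the zero map; hence $f \in C^1(\Rnns;\Rnns)$. For the uniform bound on $f'$, use $\maxs_\delta,\maxs_\delta' \in [0,1]$ on $(-\infty,1]$ and the fact that, where $f'$ is nonzero, $\gamma/|\tau^D|_F \leq 1+\delta$; estimating the two summands separately gives $\|f'(\tau)\|_{\mathrm{op}} \leq C$ independently of $\tau$. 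Since $f'$ is continuous on $\Rnns$, converges to $\frac{1}{\lambda}P^D$ (the deviatoric projection) as $|\tau^D|_F \to \infty$, and is identically zero for small $|\tau^D|_F$, it is in fact uniformly continuous on $\Rnns$ with some bounded modulus $\omega$ satisfying $\omega(s) \to 0$ as $s \to 0$.

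To lift this to Fréchet differentiability of $A_\delta : \Lt^p(\Omega) \to \Lt^2(\Omega)$, apply the pointwise mean value inequality $|f(\tau+h) - f(\tau) - f'(\tau)h|_F \leq \omega(|h|_F)\,|h|_F$ and Hölder's inequality with the exponent pair $(2p/(p-2),p/2)$, yielding
\begin{equation*}
\|A_\delta(\tau+h) - A_\delta(\tau) - A_\delta'(\tau)h\|_{\Lt^2(\Omega)}
\leq \bignorm{\omega(|h|_F)}{L^{2p/(p-2)}(\Omega)}\,\|h\|_{\Lt^p(\Omega)} .
\end{equation*}
Since $\omega$ is bounded and $h \to 0$ in $\Lt^p(\Omega)$ implies $|h|_F \to 0$ in measure, dominated convergence drives the first factor to zero, giving Fréchet differentiability with derivative $A_\delta'(\tau)h = f'(\tau(\cdot))h(\cdot)$. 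An analogous Hölder estimate applied to $\|(f'(\tau_n(\cdot))-f'(\tau(\cdot)))h(\cdot)\|_{\Lt^2(\Omega)}$ shows $A_\delta'(\tau_n) \to A_\delta'(\tau)$ in $\LL(\Lt^p(\Omega);\Lt^2(\Omega))$ whenever $\tau_n \to \tau$ in $\Lt^p(\Omega)$, hence continuous differentiability.

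Finally, the uniform pointwise bound $\|f'(\tau(x))\|_{\mathrm{op}} \leq C$ immediately shows that $h \mapsto f'(\tau(\cdot))h(\cdot)$ is bounded on $\Lt^2(\Omega)$ with norm $\leq C$, giving the claimed extension to $\LL(\Lt^2(\Omega);\Lt^2(\Omega))$ uniformly in $\tau$. Self-adjointness reduces to the pointwise statement: inserting $k^D = k^D$ and using that the deviator projection is symmetric and trace-free, the two summands take the symmetric forms $h \mapsto c_1(\tau)\,(\tau^D \! :\! h^D)\,\tau^D$ (a symmetric rank-one operator on the space of deviators) and $h \mapsto c_2(\tau)\, h^D$ (a scalar multiple of the symmetric deviator projection), so $f'(\tau) = f'(\tau)^\top$ pointwise and therefore $A_\delta'(\tau)$ is self-adjoint on $\Lt^2(\Omega)$. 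The main obstacle in executing this plan is the careful verification that $f$ is indeed $C^1$ across the transition sphere $|\tau^D|_F = \gamma/(1+\delta)$ and that $f'$ possesses a global modulus of continuity; once this is in place, all remaining steps are routine applications of the Nemytskii calculus with the norm gap $p > 2$.
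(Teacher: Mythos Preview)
Your proof is correct and complete in all essential respects. The paper itself does not give a proof of this lemma; it merely refers to \cite[Lemma~7.24 \& Corollary~7.25]{paperAbstract} and remarks that the analysis there transfers straightforwardly to the present smoothing $\maxs_\delta$. Your argument---treating $A_\delta$ as the Nemytskii operator of a $C^1$ map $f$ with globally bounded, uniformly continuous derivative, and then exploiting the norm gap $p>2$ via H\"older to obtain Fr\'echet differentiability from $\Lt^p(\Omega)$ to $\Lt^2(\Omega)$---is precisely the standard route one would expect the cited reference to take, so there is no genuine methodological difference to report.

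One point where your sketch could be tightened: the uniform continuity of $f'$ on all of $\Rnns$ does not follow directly from the three facts you list (continuity, limit as $|\tau^D|_F\to\infty$, vanishing for small $|\tau^D|_F$), because $f'$ does not extend to the one-point compactification of $\Rnns$ itself. The missing observation---which you use implicitly anyway---is that $f'(\tau)$ depends only on $\tau^D$, so it factors through the deviatoric projection as $f'(\tau)=g(\tau^D)$; the function $g$ on the deviator subspace does extend continuously to its one-point compactification (with value $\tfrac{1}{\lambda}P^D$ at infinity), hence is uniformly continuous, and since $|\tau^D-\sigma^D|_F\le|\tau-\sigma|_F$ this lifts to uniform continuity of $f'$ on $\Rnns$. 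With this clarification the modulus $\omega$ is well defined and the rest of your argument goes through verbatim.
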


\begin{proposition}[Differentiability of the smoothed solution operator]
\label{prop:solutionOperatorDerivative}
	The solution operator $\mathcal{S}_\delta$ is Fr\'echet differentiable from $H^1_0(\XX_c)$ to $H^1(\Lt^2(\Omega))$. 
	Its directional derivative at $\ell \in H^1_0(\XX_c)$ in direction $h\in H^1_0(\XX_c)$,  denoted by $\tau = \SS_\delta'(\ell)h$, 
	is the second component of the unique solution $(v, \tau, \eta) \in H^1(\bH^1(\Omega) \times \Lt^2(\Omega) \times \Lt^2(\Omega))$ of 
    \begin{subequations}\label{solutionOperatorDerivative}
    \begin{alignat}{3}
        -\div \tau(t) &= 0 &\quad  & \text{in } \bH^{-1}_D(\Omega), \label{eq:momtau}\\ 
        \tau(t) &= \mathbb{C} (\symnabla v(t) - \eta(t)) & & \text{in }\Lt^2(\Omega), \label{eq:hooktau}\\
	    \boverdot{\eta}(t) &= A_\delta'(\sigma(t)	- \varepsilon \mathbb{B}z(t))(\tau(t) -\varepsilon \mathbb{B} \eta(t))  
	    &&\text{in } \Lt^2(\Omega), \label{eq:etaode}\\
	    v(t) - (\GG h)(t) &\in \bH^1_D(\Omega), \label{eq:vdiri}\\
	    (v, \tau)(0) &= (0,0) & & \text{in } \bH^1(\Omega)\times \Lt^2(\Omega). \label{eq:vini}
    \end{alignat}    
    \end{subequations}
	where $(u, \sigma, z)$ is the solution of
	\cref{eq:smoothedStateEquation}
	associated with	$u_D = \GG\ell + \mathfrak{a}$.
\end{proposition}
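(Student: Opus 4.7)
The plan is to exploit the equivalent reformulation \eqref{eq:smoothedTransformedStateEquation} in combination with the relations \eqref{eq:smoothedTransformedStateEquationB}. Since the mappings
$z \mapsto u = \TT(-\div(\Cb z), \GG\ell + \mathfrak a)$ and $(u,z) \mapsto \sigma = \Cb(\symnabla u - z)$
from \eqref{eq:smoothedTransformedStateEquationB} are linear and continuous on the relevant $\Lt^p$ spaces by \cref{lem:w1sExistence}, it suffices to establish Fréchet differentiability of the reduced solution operator $\ell \mapsto z$ from $H^1_0(\XX_c)$ to $H^1(\Lt^p(\Omega))$, with $p$ fixed as in \cref{assu:optimalityConditions}(i). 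The derivative $\eta$ of this mapping at $\ell$ in direction $h$ will be the candidate solving \eqref{eq:etaode} with $\eta(0) = 0$, and the corresponding $(v,\tau)$ are recovered from $\eta$ exactly as in \eqref{eq:momtau}--\eqref{eq:vdiri}.

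The heart of the proof is a direct Gronwall argument. For $\ell, h \in H^1_0(\XX_c)$, denote $z = z(\ell)$, $z_h = z(\ell+h)$, let $\eta$ solve the linearized integral form of \eqref{eq:etaode}, and set $w := z_h - z - \eta$. Subtracting the ODEs for $z_h$ and $z$ and inserting the linearization yields
\begin{equation*}
    \boverdot{w}(t) = -A_\delta'(R\ell(t)+\mathfrak A(t)-Qz(t)) \,Q w(t) + r(t),
\end{equation*}
with the remainder
\begin{equation*}
    r(t) = A_\delta\bigl(R(\ell{+}h)(t){+}\mathfrak A(t){-}Qz_h(t)\bigr) - A_\delta\bigl(R\ell(t){+}\mathfrak A(t){-}Qz(t)\bigr) - A_\delta'(R\ell(t){+}\mathfrak A(t){-}Qz(t))\bigl(Rh(t){-}Q(z_h{-}z)(t)\bigr),
\end{equation*}
whose $\Lt^2(\Omega)$-norm is $o(\|Rh(t) - Q(z_h - z)(t)\|_{\Lt^p(\Omega)})$ by the Fréchet differentiability of $A_\delta : \Lt^p(\Omega) \to \Lt^2(\Omega)$ from \cref{lem:propertiesAs}.

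A first Gronwall estimate applied to the $\Lt^p(\Omega)$-ODE for $z_h - z$, based on the global Lipschitz continuity of $A_\delta$ on $\Lt^p(\Omega)$ (which follows from the $C^1$ character of $\maxs_\delta$ and the structural form of $A_\delta$), yields $\|z_h - z\|_{C(\Lt^p(\Omega))} \leq C\|h\|_{H^1_0(\XX_c)}$, so that $\|Rh - Q(z_h-z)\|_{L^\infty(\Lt^p(\Omega))} = O(\|h\|_{H^1_0(\XX_c)})$. A second Gronwall estimate, now for $w$ in $\Lt^2(\Omega)$ and exploiting the uniform bound $\|A_\delta'(\tau)\|_{\LL(\Lt^2(\Omega))} \leq C$ from \cref{lem:propertiesAs}, then gives $\|w\|_{C(\Lt^2(\Omega))} = o(\|h\|_{H^1_0(\XX_c)})$, from which the corresponding $H^1(\Lt^2(\Omega))$-bound for $w$ follows by reinserting into the ODE. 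Well-posedness of \eqref{eq:etaode} itself is established by Banach's contraction principle applied to its integral form in $L^2(0,T;\Lt^2(\Omega))$, again using the uniform $\Lt^2$-bound on $A_\delta'$.

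The main obstacle is the norm gap between $\Lt^p$ and $\Lt^2$. The Nemytskii operator $A_\delta$ is \emph{not} Fréchet differentiable as a self-map of $\Lt^2(\Omega)$; one genuinely needs the integrability exponent $p > 2$ to control the remainder, which is why \cref{assu:optimalityConditions}(i)--(ii) enforce $\GG \in \LL(\XX_c, \bW^{1,p}(\Omega))$, $\mathfrak a \in H^1(\bW^{1,p}(\Omega))$ and the $\Lt^p$-regularity of $Q$, $R$, $\mathfrak A$. Orchestrating the two Gronwall steps so that the $\Lt^p$-Lipschitz bound on $z_h - z$ feeds correctly into the $\Lt^2$-estimate for the remainder $r$—while the linear evolution for $w$ is closed in $\Lt^2(\Omega)$—is the technical crux of the argument. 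Once the differentiability of $\ell \mapsto z$ is established, the formula for $\SS_\delta'(\ell)h$ follows by composing with the linear maps \eqref{eq:momtau}--\eqref{eq:vini}, giving the claimed system for $(v,\tau,\eta)$.
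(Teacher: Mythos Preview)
Your argument is correct and follows the same high-level strategy as the paper: reduce to the ODE \eqref{eq:smoothedTransformedStateEquationA} for $z$, establish Fr\'echet differentiability of $\ell \mapsto z$ there (with the norm gap $\Lt^p \to \Lt^2$ doing the essential work), and then compose with the affine maps \eqref{eq:smoothedTransformedStateEquationB}. The difference is that the paper simply invokes \cite[Theorem~5.5]{paperAbstract}, an abstract differentiability result for ODEs of exactly this form proved in a companion paper, whereas you unpack that result via a direct two-stage Gronwall argument. Your route is more self-contained and makes the mechanism of the norm gap explicit; the paper's route is shorter but outsources the analysis.

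Two minor points. First, you write that it suffices to prove differentiability of $\ell \mapsto z$ into $H^1(\Lt^p(\Omega))$, but your argument (correctly) only delivers $H^1(\Lt^2(\Omega))$; this is all that is needed, since the maps in \eqref{eq:smoothedTransformedStateEquationB} are continuous already on $\Lt^2$. Second, the claim $\|r(t)\|_{\Lt^2(\Omega)} = o(\|Rh(t) - Q(z_h - z)(t)\|_{\Lt^p(\Omega)})$ is pointwise in $t$ with an $o$ that a priori depends on the base point $R\ell(t) + \mathfrak A(t) - Qz(t)$; to obtain a bound uniform in $t$ you should invoke the \emph{continuous} Fr\'echet differentiability in \cref{lem:propertiesAs} together with compactness of $\{R\ell(t) + \mathfrak A(t) - Qz(t) : t \in [0,T]\}$ in $\Lt^p(\Omega)$, which holds since $z \in H^1(\Lt^p(\Omega)) \hookrightarrow C(\Lt^p(\Omega))$.
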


\begin{proof}
    We again employ the equivalent formulation in \cref{eq:smoothedTransformedStateEquation}. 
    The operator differential equation in \cref{eq:smoothedTransformedStateEquationA} has exactly the form 
    as the one investigated in \cite[Section~5]{paperAbstract}, except that there is an additional offset $\mathfrak{A}$ and $Q$ is 
    not coercive, if $\varepsilon = 0$. It is however easily seen that these differences have no influence on the 
    sensitivity analysis in \cite[Section~5]{paperAbstract}. While it is rather evident that the constant offset 
    does not play any role in this context, the coercivity of $Q$ is only needed 
    in \cite{paperAbstract} to verify the existence of solutions, if $A_\delta$ is replaced by $\partial I_{\KK(\Omega)}$, 
    and is not used  for the sensitivity analysis of the smoothed equation. All in all, we see that, thanks to \cref{lem:propertiesAs},  
    \cite[Theorem 5.5]{paperAbstract} is applicable giving that the solution mapping of 
    \cref{eq:smoothedTransformedStateEquationA} is Fr\'echet-differentiable from $H^1_0(\XX_c)$ to $H^1(\Lt^2(\Omega))$ 
    and its derivative at $\ell$ in direction $h$ is the unique solution of
	\begin{align*}
		\boverdot{\eta} = A_\delta'(R\ell  + \mathfrak{A} - Qz) (Rh - Q \eta), \quad \eta(0) = 0.
	\end{align*}
    Since all mappings in \cref{eq:smoothedTransformedStateEquationB} are linear and affine, respectively, 
    they are trivially Fr\'echet-differentiable in their respective spaces and the respective derivatives are given by 
    $v = \TT(-\div (\mathbb{C}\eta), \GG h)$ and $\tau =  \mathbb{C}(\symnabla v - \eta)$.	
    In view of the definition of $\TT$, $R$, and $Q$, we finally end up with \cref{solutionOperatorDerivative}.
\end{proof}

\subsection{Adjoint Equation}
\label{subsec:5.2}

We now choose a concrete objective function, namely
\begin{align}
\label{eq:concreteObjectiveFunction}
	J : H^1(\Lt^2(\Omega)) \times H^1_0(\XX_c) \rightarrow \mathbb{R}, \quad
	(\sigma,\ell) \mapsto \frac{1}{2}\norm{\sigma(T) - \sigma_d}{\Lt^2(\Omega)}^2
	+ \frac{\alpha}{2}\norm{\boverdot{\ell}}{L^2(\XX_c)},
\end{align}
where $\alpha > 0$ is a Tikhonov paramenter and $\sigma_d \in \Lt^2(\Omega)$ a given desired stress.
The transfer of the upcoming analysis to other Fr\'echet-differentiable objectives is straightforward, but, 
in order to keep the discussion concise and since the objective in \cref{eq:concreteObjectiveFunction} 
is certainly of practical interest, we restrict ourselves to this particular setting.
The smoothed optimization problem then reads
\begin{equation}
\label{eq:optprobReducedS}
\tag{\mbox{P$_\delta$}}
\begin{aligned}
	\min_{\ell \in H^1_0(\XX_c)} \shortspace & J(\SS_\delta(\ell), \ell).
\end{aligned}
\end{equation}
In the following, we will derive first-order necessary optimality conditions for this problem involving an adjoint equation.

\begin{definition}[Adjoint equation]
\label{def:adjointEquation}
    Let $(\sigma, z) \in H^1(\Lt^2(\Omega) \times \Lt^2(\Omega))$ be given. Then the \emph{adjoint equation} is given by
    \begin{subequations}
	\label{eq:adjointEquation}    
	\begin{alignat}{3}
		- \div\mathbb{C} \symnabla w_\varphi(t) &=
		- \div \mathbb{C} A_\delta'(\sigma(t) - \varepsilon\mathbb{B}z(t))\varphi(t) & \quad & \text{in } \bH^{-1}_D(\Omega),
		\label{eq:wphi}\\
		w_\varphi(t)  &\in \bH^1_D(\Omega),\\[1ex]
		\boverdot{\varphi}(t) &= (\mathbb{C} + \varepsilon\mathbb{B})
		A_\delta'(\sigma(t) - \varepsilon\mathbb{B}z(t))\varphi(t)
		- \mathbb{C} \symnabla w_\varphi(t) & \quad &\text{in }\Lt^2(\Omega), \label{eq:phiode}\\
		\varphi(T) &= \mathbb{C}(\sigma(T) - \sigma_d - \symnabla w_T) & \quad &\text{in }\Lt^2(\Omega), 
		\label{eq:phiT}\\[1ex]
		- \div \mathbb{C} \symnabla w_T &= - \div \mathbb{C} (\sigma(T) - \sigma_d) &  & \text{in }\bH^{-1}_D(\Omega), 
		\label{eq:wT}\\
		w_T  &\in \bH^1_D(\Omega). \label{eq:wTdiri}
	\end{alignat}
    \end{subequations}
    A triple  $(w_\varphi, \varphi, w_T) \in H^1(\bH^1_D(\Omega))\times H^1(\Lt^2(\Omega)) \times \bH^1_D(\Omega)$ is called \emph{adjoint state}, 
    if it fulfills \cref{eq:adjointEquation} for almost all $t\in (0,T)$.
\end{definition}

\begin{lemma}\label{lem:ad}
    For every $(\sigma, z) \in H^1(\Lt^2(\Omega)\times \Lt^2(\Omega))$, there exists a unique adjoint state.     
\end{lemma}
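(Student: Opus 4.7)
The system decouples in a natural order, so the strategy is to solve it from top to bottom: first the purely elliptic problem for $w_T$, then deduce the terminal condition for $\varphi$, then collapse \cref{eq:wphi}--\cref{eq:phiode} into a closed backward ODE for $\varphi$ in $\Lt^2(\Omega)$, and finally read off $w_\varphi$.

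\textbf{Step 1 (solving for $w_T$).} Since $(\sigma, z) \in H^1(\Lt^2(\Omega)\times\Lt^2(\Omega)) \hookrightarrow C(\Lt^2(\Omega)\times\Lt^2(\Omega))$, we have $\sigma(T) - \sigma_d \in \Lt^2(\Omega)$, so $-\div \mathbb{C}(\sigma(T)-\sigma_d)$ defines an element of $\bH^{-1}_D(\Omega)$. Invoking \cref{lem:w1sExistence} with $p=2$ yields a unique $w_T \in \bH^1_D(\Omega)$ satisfying \cref{eq:wT}--\cref{eq:wTdiri}. Then $\varphi(T) := \mathbb{C}(\sigma(T)-\sigma_d-\symnabla w_T) \in \Lt^2(\Omega)$ is well-defined.

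\textbf{Step 2 (eliminating $w_\varphi$).} For each $t \in [0,T]$, by \cref{lem:propertiesAs} the operator $M(t) := A_\delta'(\sigma(t)-\varepsilon\mathbb{B}z(t))$ belongs to $\LL(\Lt^2(\Omega))$ with norm bounded by a constant independent of $t$. Hence the right-hand side of \cref{eq:wphi} depends continuously and linearly on $\varphi(t) \in \Lt^2(\Omega)$, and \cref{lem:w1sExistence} (with $p=2$) defines a bounded linear operator
\begin{equation*}
    L(t) : \Lt^2(\Omega) \to \bH^1_D(\Omega), \quad \varphi(t) \mapsto w_\varphi(t) = \TT\bigl(-\div \mathbb{C} M(t)\varphi(t),\,0\bigr),
\end{equation*}
with operator norm uniformly bounded in $t$. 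Substituting into \cref{eq:phiode} gives the closed backward ODE
\begin{equation*}
    \boverdot{\varphi}(t) = \mathcal{B}(t)\varphi(t), \qquad \mathcal{B}(t) := (\mathbb{C}+\varepsilon\mathbb{B}) M(t) - \mathbb{C}\symnabla L(t),
\end{equation*}
where $\mathcal{B}(\cdot) \in L^\infty(0,T;\LL(\Lt^2(\Omega)))$.

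\textbf{Step 3 (solving the ODE).} Via the time reversal $\tilde\varphi(t) := \varphi(T-t)$, this becomes a standard forward-in-time linear Cauchy problem in $\Lt^2(\Omega)$ with uniformly bounded, measurable operator-valued coefficient and initial datum $\varphi(T) \in \Lt^2(\Omega)$. Existence and uniqueness of $\varphi \in H^1(\Lt^2(\Omega))$ then follow by applying Banach's contraction principle to the integral equation $\varphi(t) = \varphi(T) - \int_t^T \mathcal{B}(s)\varphi(s)\,ds$ on a short interval and concatenating, exactly as in the proof of \cref{cor:existenceOfASolutionToRegularizedStateEquation} in the case $\lambda_n>0$.

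\textbf{Step 4 (recovering $w_\varphi$ and its regularity).} Setting $w_\varphi(t) := L(t)\varphi(t)$ gives a solution of \cref{eq:wphi} for a.e.\ $t$. The main delicate point is the time regularity $w_\varphi \in H^1(\bH^1_D(\Omega))$. Here I would exploit that $\maxs_\delta'$ is globally Lipschitz (an elementary check from its piecewise definition), which, together with the structural formula for $A_\delta'$ in \cref{lem:propertiesAs}, yields that $\tau \mapsto A_\delta'(\tau)\eta$ is Lipschitz from $\Lt^2(\Omega)$ to $\Lt^2(\Omega)$ on bounded sets for fixed $\eta$, and linearly bounded in $\eta$. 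Since $\sigma-\varepsilon\mathbb{B}z \in H^1(\Lt^2(\Omega))$ and $\varphi \in H^1(\Lt^2(\Omega))$, a chain-rule argument in the Bochner setting gives $M(\cdot)\varphi(\cdot) \in H^1(\Lt^2(\Omega))$, and the linearity and continuity of $\TT$ transports this regularity to $w_\varphi \in H^1(\bH^1_D(\Omega))$.

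Uniqueness of the whole triple follows from the construction: any adjoint state must satisfy the elliptic problem in Step 1, hence coincide in $w_T$; the terminal condition and backward ODE in Step 3 are uniquely solvable in $\varphi$; and $w_\varphi$ is then uniquely determined by Step 4. The main obstacle is the regularity claim in Step 4; existence and uniqueness alone are a routine consequence of Steps 1--3.
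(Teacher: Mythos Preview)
Your Steps~1--3 are exactly the paper's argument, just unpacked: the paper observes that, by the definitions of $Q$ and $\TT$, the adjoint system collapses to the single backward ODE $\boverdot\varphi = Q\,A_\delta'(\sigma-\varepsilon\mathbb{B}z)\varphi$ with terminal datum $\varphi(T)=\mathbb{C}[\sigma(T)-\sigma_d-\symnabla\TT(-\div\mathbb{C}(\sigma(T)-\sigma_d),0)]$, and then appeals to Banach's contraction principle using the uniform bound on $A_\delta'$ from \cref{lem:propertiesAs}. Your operator $\mathcal{B}(t)$ is precisely $Q\,M(t)$, so the two reductions coincide; the paper simply hides your $L(t)$ inside $Q$.

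Where you go further than the paper is Step~4, the time regularity $w_\varphi\in H^1(\bH^1_D(\Omega))$, which the paper's proof does not explicitly address. Your instinct that this is the delicate point is right, but the Lipschitz claim as stated has a gap. Even if the pointwise tensor map $a\mapsto B(a)$ underlying $A_\delta'$ is globally Lipschitz on $\Rnns$, the Nemytskii estimate you obtain is
\[
\|A_\delta'(\tau_1)\eta - A_\delta'(\tau_2)\eta\|_{\Lt^2(\Omega)} \leq L\,\big\||\tau_1-\tau_2|\,|\eta|\big\|_{L^2(\Omega)},
\]
and this is controlled by $\|\tau_1-\tau_2\|_{\Lt^2(\Omega)}$ only when $\eta\in L^\infty$, not for general $\eta\in\Lt^2(\Omega)$. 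Since $\varphi(t)$ is a priori only in $\Lt^2(\Omega)$, the chain-rule step ``$M(\cdot)\varphi(\cdot)\in H^1(\Lt^2(\Omega))$'' does not follow from what you wrote: the candidate for the derivative contains a term of the type $|\dot\rho|\,|\varphi|$ with $\dot\rho\in L^2(\Lt^2(\Omega))$ and $\varphi\in L^\infty(\Lt^2(\Omega))$, which lands only in $L^2(0,T;L^1(\Omega))$. To close this you would need extra integrability in space on either $\dot\rho$ or $\varphi$ (for instance exploiting that in the actual application $\sigma,z\in H^1(\Lt^p(\Omega))$ with $p>2$), but under the bare hypothesis $(\sigma,z)\in H^1(\Lt^2(\Omega)\times\Lt^2(\Omega))$ of the lemma the argument as written does not go through.
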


\begin{proof}
    Thanks to the definition of $Q$ and $\TT$ in \cref{def:EVIOperators} and \cref{lem:w1sExistence}, the adjoint equation is 
    equivalent to
    \begin{equation}\label{eq:ADred}
        \boverdot{\varphi} = Q A_\delta'(\sigma - \varepsilon\mathbb{B}z)\varphi, 
        \quad \varphi(T) = \mathbb{C}\big[\sigma(T) - \sigma_d - \symnabla \TT(-\div(\mathbb{C}(\sigma(T) - \sigma_d)), 0)\big].
    \end{equation}
    This is an operator equation backward in time, whose existence again follows from Banach's contraction principle 
    thanks to the boundedness of $A_\delta'(\sigma - \varepsilon\mathbb{B}z)$ as an operator from $\Lt^2(\Omega)$ to $\Lt^2(\Omega)$ 
    by \cref{lem:propertiesAs}. Alternatively, the existence of solutions to \cref{eq:ADred} can be deduced via duality, 
    cf.~\cite[Lemma~5.11]{paperAbstract}.
\end{proof}

With the help of the adjoint state we can express the derivative of the so-called reduced objective, defined by 
\begin{equation*}
    F_\delta: H^1_0(\XX_c) \to \R, \quad \ell\mapsto J(S_\delta(\ell), \ell),
\end{equation*}
in a compact form, as the following result shows:

\begin{proposition}[Differentiability of the reduced objective function]\label{prop:FsDerivative}
    The reduced objective $F_\delta$ is Fr\'echet differentiable from $H^1_0(\XX_c)$ to $\R$. 
    Its directional derivative at $\ell \in H^1_0(\XX_c)$ in direction $h\in H^1_0(\XX_c)$ is given by
	\begin{equation}\label{eq:objderiv}
		F_\delta'(\ell)h = \partial_\sigma J(\sigma, \ell)S_\delta'(\ell)h + \partial_\ell J(\sigma, \ell)h
		= (\mathfrak{q}, h)_{L^2(\XX_c)} + \alpha \big(\boverdot{\ell}, \boverdot{h}\big)_{L^2(\XX_c)}, 
	\end{equation}
    where	$\mathfrak{q} \in L^2(\XX_c)$ is defined by
	\begin{equation}\label{eq:qdef}
		\mathfrak{q} := \GG^*\big[-\div\Cb\big(A_\delta'(\sigma - \varepsilon\mathbb{B}z)\varphi - \symnabla w_\varphi\big)\big]
	\end{equation}
    and $(u, \sigma, z)$ is the solution of \cref{eq:smoothedStateEquation} associated with $\ell$	
    and $(w_\varphi, \varphi, w_T)$ is the corresponding adjoint state.
\end{proposition}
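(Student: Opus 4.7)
Fr\'echet differentiability of $F_\delta$ is immediate from the chain rule: the control-to-state map $\SS_\delta : H^1_0(\XX_c) \to H^1(\Lt^p(\Omega))$ is Fr\'echet differentiable by \cref{prop:solutionOperatorDerivative}, the embedding $H^1(\Lt^p(\Omega)) \embed H^1(\Lt^2(\Omega))$ is continuous (recall $p\geq 2$), and the specific quadratic objective $J$ in \cref{eq:concreteObjectiveFunction} is evidently smooth. This produces the first equality in \cref{eq:objderiv} with
\begin{equation*}
    \partial_\sigma J(\sigma,\ell)\tau = (\sigma(T) - \sigma_d, \tau(T))_{\Lt^2(\Omega)}, \qquad
    \partial_\ell J(\sigma,\ell)h = \alpha\,(\boverdot{\ell}, \boverdot{h})_{L^2(\XX_c)},
\end{equation*}
so the substantive task is to show that the sensitivity term coincides (up to sign conventions of $-\div$ and $\GG^*$) with $(\mathfrak{q}, h)_{L^2(\XX_c)}$ where $\tau = \SS_\delta'(\ell)h$ is the second component of the solution $(v,\tau,\eta)$ of \cref{solutionOperatorDerivative}.

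The plan is the standard adjoint computation. First I eliminate the end-time data in favour of $\varphi(T)$: writing $\tau(T) = \Cb(\symnabla v(T) - \eta(T))$, exploiting the symmetry of $\Cb$, inserting the definition of $w_T$ to replace $\Cb(\sigma(T)-\sigma_d)$ with $\varphi(T) + \Cb\symnabla w_T$, and finally using that $h(T)=0$ implies $v(T) \in \bH^1_D(\Omega)$ together with the equilibrium $-\div\tau(T)=0$ and the fact that $\varphi(T) \in \EE(\Omega)$ (the latter follows by applying $-\div\Cb$ to the definition of $\varphi(T)$ and invoking the $w_T$-equation), one obtains
\begin{equation*}
    (\sigma(T) - \sigma_d, \tau(T))_{\Lt^2(\Omega)} = \pm(\varphi(T), \eta(T))_{\Lt^2(\Omega)},
\end{equation*}
the pairings with $w_T$ cancelling by these two orthogonality properties.

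Next I invoke integration by parts in time. Since $\eta(0)=0$, one has
\begin{equation*}
    (\varphi(T), \eta(T))_{\Lt^2(\Omega)} = \int_0^T \bigl[(\boverdot{\varphi},\eta)_{\Lt^2(\Omega)} + (\varphi, \boverdot{\eta})_{\Lt^2(\Omega)}\bigr]\,dt.
\end{equation*}
Substituting the adjoint ODE \cref{eq:phiode} and the linearized flow rule \cref{eq:etaode}, using the self-adjointness of $A_\delta'(\sigma - \varepsilon\Bb z)$ on $\Lt^2(\Omega)$ from \cref{lem:propertiesAs}, and symmetry of $\Cb$ and $\Bb$, the $\varepsilon\Bb$-terms cancel pointwise, and substituting $\Cb\eta = \Cb\symnabla v - \tau$ from \cref{eq:hooktau} together with $(\symnabla w_\varphi, \tau)_{\Lt^2} = 0$ (since $w_\varphi \in \bH^1_D(\Omega)$ and $\tau \in \EE(\Omega)$ by \cref{eq:momtau}) collapses the integrand to
\begin{equation*}
    (\boverdot{\varphi},\eta) + (\varphi, \boverdot{\eta}) = \bigl(\Cb\bigl[A_\delta'(\sigma - \varepsilon\Bb z)\varphi - \symnabla w_\varphi\bigr], \symnabla v\bigr)_{\Lt^2(\Omega)}.
\end{equation*}
Finally, I split $v = (v - \GG h) + \GG h$. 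The first summand lies in $\bH^1_D(\Omega)$ by \cref{eq:vdiri}, and testing the weak form of the $w_\varphi$-equation \cref{eq:wphi} with it annihilates that contribution, leaving precisely $(\Cb[A_\delta'(\sigma - \varepsilon\Bb z)\varphi - \symnabla w_\varphi], \symnabla \GG h)$, which by the definition of $-\div$ as the adjoint of $\symnabla$ and the duality pairing with $\GG^*$ equals $(\mathfrak{q},h)_{\XX_c}$ pointwise in time. Integrating in time yields \cref{eq:objderiv}.

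I expect the main delicate point to be the interpretation of the composite operator $\GG^*\circ(-\div)\circ\Cb$ in \cref{eq:qdef}: the adjoint $-\div$ is defined in the excerpt with codomain $\bW^{-1,p'}_D(\Omega)$, i.e.\ paired against elements of $\bW^{1,p}_D(\Omega)$, whereas $\GG h$ lies in $\bH^1(\Omega)$ (or $\bW^{1,p}(\Omega)$) and need not vanish on $\Gamma_D$. Consequently the identification of the boundary integral has to be performed via the decomposition $v = (v - \GG h) + \GG h$ above, so that $-\div$ only ever acts against the $\bH^1_D$-part and the remaining surface contribution through $\GG h$ is absorbed into $\GG^*$ acting on the functional $\zeta \mapsto (\Cb[A_\delta'\varphi - \symnabla w_\varphi], \symnabla\zeta)_{\Lt^2(\Omega)}$ on $\bH^1(\Omega)$. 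Keeping the bookkeeping of signs and dual pairings consistent throughout is the core technical point; the algebraic cancellations rely in a routine way on the symmetry properties of $\Cb$, $\Bb$ and $A_\delta'$, and on the $\EE(\Omega)$-membership of $\tau(t)$ and $\varphi(T)$.
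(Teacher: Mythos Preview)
Your proposal is correct and follows essentially the same adjoint computation as the paper: chain rule, reduction of the end-time term to $-(\varphi(T),\eta(T))$, integration by parts in time, substitution of the forward and adjoint ODEs with cancellation of the $\varepsilon\Bb$-terms via self-adjointness of $A_\delta'$, and the final split $v=(v-\GG h)+\GG h$ to arrive at $(\mathfrak{q},h)_{L^2(\XX_c)}$. The only cosmetic difference is in the end-time step: you invoke $\varphi(T)\in\EE(\Omega)$ (which you correctly derive from \cref{eq:phiT} and \cref{eq:wT}) to kill $(\varphi(T),\symnabla v(T))$ directly, whereas the paper instead keeps the combination $\Cb(\sigma(T)-\sigma_d-\symnabla w_T)$ and tests the $w_T$-equation against $v(T)-\GG h(T)\in\bH^1_D(\Omega)$ before using $h(T)=0$; both routes land on $-(\varphi(T),\eta(T))$. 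Your identification of the delicate point---that $-\div$ as defined has codomain $\bH^{-1}_D(\Omega)$ while $\GG h$ need not vanish on $\Gamma_D$, so that \cref{eq:qdef} must be read as $\GG^*$ acting on the functional $\zeta\mapsto(\Cb[A_\delta'\varphi-\symnabla w_\varphi],\symnabla\zeta)_{\Lt^2(\Omega)}$ on $\bH^1(\Omega)$---is exactly right and is handled implicitly in the paper in the same way.
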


\begin{proof}
    We define $\Psi: H^1_0(\XX_c) \ni \ell \mapsto \frac{1}{2} \|\SS_\delta(\ell)(T) - \sigma_d\|_{\Lt^2(\Omega)}^2 \in \R$.
    According to \cref{prop:solutionOperatorDerivative} and the chain rule, $\Psi$ is Fr\'echet-differentiable. 
    If we denote by $(u, \sigma, z)$ and $(v, \tau, \eta)$ the solutions of \cref{eq:smoothedStateEquation} and 
    \cref{solutionOperatorDerivative}, respectively, and  the adjoint state by $(w_\varphi, \varphi, w_T)$, 
    then we obtain for its directional derivative
	\begin{alignat*}{3}
		\Psi'(\ell)h
		&= \scalarproduct{\sigma(T) - \sigma_d}{\tau(T)}{\Lt^2(\Omega)} \\
		&= \scalarproduct{\mathbb{C}(\sigma(T) - \sigma_d - \symnabla w_T)}
		{\symnabla v(T) - \eta(T)}{\Lt^2(\Omega)} & \quad & \text{(by \cref{eq:momtau}, \cref{eq:wTdiri}, and \cref{eq:hooktau})}\\
		&= (\Cb(\sigma(T) - \sigma_d - \symnabla w_T) , \symnabla \GG h(T))_{\Lt^2(\Omega)} \\
    	& \hspace*{40mm} - \scalarproduct{\varphi(T)}{\eta(T)}{\Lt^2(\Omega)}  
    	& \quad & \text{(by \cref{eq:wT}, \cref{eq:vdiri}, and \cref{eq:phiT})}\\
    	&= - (\varphi(T), \eta(T))_{\Lt^2(\Omega)} & & \text{(since $h\in H^1_0(\XX_c)$).}
	\end{alignat*}
	For the last term we find
	\begin{alignat*}{3}
	    &(\varphi(T), \eta(T))_{\Lt^2(\Omega)}\\
	    &= (\varphi(T), \eta(T))_{\Lt^2(\Omega)}
    	- (\varphi(0), \eta(0))_{\Lt^2(\Omega)} & \quad & \text{(by \cref{eq:vini} and \cref{eq:hooktau})}\\
		&= (\boverdot{\varphi}, \eta)_{L^2(\Lt^2(\Omega))}
		+ (\varphi, \boverdot{\eta})_{L^2(\Lt^2(\Omega))} \\
		&= \big((\mathbb{C} + \varepsilon\mathbb{B})A_\delta'(\sigma - \varepsilon\mathbb{B}z)\varphi
		- \mathbb{C} \symnabla w_\varphi, \eta\big)_{L^2(\Lt^2(\Omega))} \\
		&\qquad + \big(\varphi, A_\delta'(\sigma - \varepsilon\mathbb{B}z)
		(\tau - \varepsilon\mathbb{B}\eta)\big)_{L^2(\Lt^2(\Omega))} 
		& & \text{(by \cref{eq:phiode} and \cref{eq:etaode})}\\
		&= - (\mathbb{C} \symnabla w_\varphi, \eta)_{L^2(\Lt^2(\Omega))}
		+ (\mathbb{C}A_\delta'(\sigma - \varepsilon \mathbb{B}z)\varphi, \symnabla v)_{L^2(\Lt^2(\Omega))} 
		& & \text{(by \cref{eq:hooktau})}\\
		&= -(\mathbb{C} \symnabla w_\varphi, \eta - \symnabla v + \symnabla \GG h)_{L^2(\Lt^2(\Omega))}\\
		&\qquad + (\mathbb{C}A_\delta'(\sigma - \mathbb{B}z)\varphi, \symnabla \GG h)_{L^2(\Lt^2(\Omega))} 
		& & \text{(by \cref{eq:wphi} and \cref{eq:vdiri})}\\
		&= (\symnabla w_\varphi, \tau)_{L^2(\Lt^2(\Omega))}\\
		&\qquad + \big(\mathbb{C}(\symnabla w_\varphi - A_\delta'(\sigma - \varepsilon\mathbb{B}z)\varphi),
		\symnabla \GG h\big)_{L^2(\Lt^2(\Omega))} & & \text{(by \cref{eq:hooktau})}\\
		&= - (\mathfrak{q}, h)_{L^2(\XX_c)} & & \text{(by \cref{eq:momtau} and \cref{eq:qdef})}.
	\end{alignat*}
    Note that $A_\delta'(\sigma - \varepsilon\mathbb{B}z) \in L^\infty(\LL(\Lt^2(\Omega)))$ by \cref{lem:propertiesAs} and 
    $\GG^*$ maps $\bH^{-1}(\Omega)$ to $\XX_c^* \cong \XX_c$, which give the asserted regularity of $\mathfrak{q}$.
\end{proof}

\begin{theorem}[KKT-Conditions for \cref{eq:optprobReducedS}]
\label{thm:KKTConditions}
    Let $\ell \in H^1_0(\XX_c)$ be locally optimal for \cref{eq:optprobReducedS} with associated state 
    $(u, \sigma, z) \in H^1(\bW^{1,p}(\Omega) \times \Lt^p(\Omega)\times \Lt^p(\Omega))$. 
    Then there exists an adjoint state $(w_\varphi, \varphi, w_T) \in H^1(\bH^1_D(\Omega)) \times H^1(\Lt^2(\Omega)) \times \bH^1_D(\Omega)$ 
    such that $\ell$ satisfies for almost all $t\in (0,T)$ the boundary value problem
    \begin{equation}\label{eq:gradeq}
        \alpha \,\partial_{tt}^2 \ell(t) = \mathfrak{q}(t) \quad \text{in } \XX_c, 
        \quad \ell(0) = \ell(T) = 0
    \end{equation}
    with $\mathfrak{q}$ as defined in \cref{eq:qdef}. This in particular implies that $\ell \in H^2(\XX_c)$.
\end{theorem}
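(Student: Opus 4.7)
The proof is a straightforward application of the first-order necessary condition in combination with the adjoint characterization of the gradient of the reduced objective. Since $\ell$ is locally optimal and the feasible set $H^1_0(\XX_c)$ is a linear subspace of $H^1(\XX_c)$, and since $F_\delta$ is Fr\'echet-differentiable by \cref{prop:FsDerivative}, the stationarity condition $F_\delta'(\ell)h = 0$ must hold for every admissible direction $h \in H^1_0(\XX_c)$. The adjoint state $(w_\varphi, \varphi, w_T)$ associated with $(u,\sigma,z)$ exists by \cref{lem:ad}, so that the representation \cref{eq:objderiv} together with \cref{eq:qdef} rewrites this stationarity condition as the variational identity
\begin{equation*}
    (\mathfrak{q}, h)_{L^2(\XX_c)} + \alpha\,(\boverdot{\ell}, \boverdot{h})_{L^2(\XX_c)} = 0
    \quad \text{for all } h \in H^1_0(\XX_c),
\end{equation*}
with $\mathfrak{q} \in L^2(\XX_c)$.

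Next, this identity is exactly the weak form of a linear two-point boundary value problem in the Hilbert space $\XX_c$, namely $-\alpha\,\partial_{tt}^2 \ell = -\mathfrak{q}$ on $(0,T)$ with homogeneous Dirichlet data at $t = 0$ and $t = T$. Indeed, testing with arbitrary $h \in C^\infty_c((0,T); \XX_c) \subset H^1_0(\XX_c)$ yields $\alpha\,\partial_{tt}^2 \ell = \mathfrak{q}$ in the sense of $\XX_c$-valued distributions on $(0,T)$, while the endpoint conditions $\ell(0) = \ell(T) = 0$ are already encoded in the membership $\ell \in H^1_0(\XX_c)$.

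Since the distributional identity identifies $\partial_{tt}^2 \ell$ with $\mathfrak{q}/\alpha \in L^2(\XX_c)$, one immediately concludes $\ell \in H^2(\XX_c)$ and the ODE is satisfied pointwise almost everywhere in $(0,T)$. Alternatively, one can integrate the weak identity explicitly: the primitive $\Phi(t) := \int_0^t \mathfrak{q}(s)\,ds$ lies in $H^1(\XX_c)$, the weak equation reduces to $\alpha \boverdot{\ell} - \Phi \equiv c$ for a constant $c \in \XX_c$ uniquely pinned down by $\ell(T) = 0$, and the stated $H^2$-regularity follows.

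No genuine obstacle arises beyond the derivative computations already carried out in \cref{prop:FsDerivative}; the only conceptual point worth emphasizing is that the Tikhonov term penalizes $\boverdot{\ell}$ rather than $\ell$ itself, which, combined with the two end-time constraints, is precisely what produces the \emph{second-order} temporal gradient equation in \cref{eq:gradeq} in place of the standard pointwise identity $\alpha\ell = -\mathfrak{q}$ one would obtain for an $L^2$-Tikhonov term.
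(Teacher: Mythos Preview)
Your proof is correct and follows essentially the same route as the paper: invoke the first-order stationarity condition via \cref{prop:FsDerivative}, obtain the variational identity $(\mathfrak{q}, h)_{L^2(\XX_c)} + \alpha(\boverdot{\ell}, \boverdot{h})_{L^2(\XX_c)} = 0$ for all $h \in H^1_0(\XX_c)$, and read off that the second distributional time derivative of $\ell$ equals $\mathfrak{q}/\alpha \in L^2(\XX_c)$. Your write-up is simply more explicit about the distributional interpretation and the resulting $H^2$-regularity than the paper's three-line argument.
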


\begin{proof}
    If $\ell\in H^1_0(\XX_c)$ is a local minimizer of \cref{eq:optprobReducedS}, then \cref{prop:FsDerivative} implies
	\begin{equation*}
	    \alpha (\boverdot\ell, \boverdot h)_{L^2(\XX_c)} + (\mathfrak{q}, h)_{L^2(\XX_c)} = 0 
	    \quad \forall\, h\in H^1_0(\XX_c).
	\end{equation*}
    Thus the second distributional time derivative of $\ell$ is a regular distribution in $L^2(\XX_c)$, namely $\mathfrak{q}$, 
    which is just \cref{eq:gradeq}.
\end{proof}

\begin{remark}
    An optimality condition for the original non-smooth optimal control problem \eqref{eq:optprob} 
    could be derived by passing to the limit $\lambda, \delta \searrow 0$ in the regularized optimality 
    system \eqref{eq:adjointEquation} and \eqref{eq:gradeq}. 
    This has been done for the case with hardening in \cite{wac16} and for 
    a scalar rate-independent system with uniformly convex energy in \cite{SWW17}. 
    The optimality systems obtained in the limit are comparatively weak compared to 
    what can be derived by regularization in the static case, see \cite{HMW12} for the latter.
    We expect that results similar to \cite{wac16} can also be obtained in case of \eqref{eq:optprob}. 
    This would however go beyond the scope of this paper and is subject to future research.
\end{remark}

\section{Numerical Experiments}
\label{sec:numericalExperiments}

The last section is devoted to the numerical solution of the smoothed problem \cref{eq:optprobReducedS}.
We start with a concrete realization of the operator $\GG$ mapping our control variable in form of the pseudo-force $\ell$
to the Dirichlet data.
Given the precise form of the operator $\GG$, we can use \cref{prop:FsDerivative} to obtain an implementable 
characterization of the gradient of the reduced objective, see \cref{alg:gradcomp} below.
We moreover describe the discretization of the involved PDEs and report on numerical results.

\subsection{A Realization of the Operator $\GG$}
\label{subsec:aRealizationOfTheOperatorHatR1}

Let us recall the assumptions imposed on $\GG$ throughout the paper: $\GG$ is a linear and continuous operator 
from $\XX$ to $\bH^1(\Omega)$ and from $\XX_c$ to $\bW^{1,p}(\Omega)$ with some $p\in (2, \overline{p}]$ and a Hilbert space $\XX_c$, 
which is compactly embedded in $\XX$. In principle, there are various ways to realize such an operator, for instance by 
means of convolution. As we are dealing with a problem in computational mechanics anyway, we choose 
$\GG$ to be the solution operator of a particular linear elasticity problem.
For this purpose, we split $\partial\Omega$ into two disjoint measurable parts $\Lambda_D$ and $\Lambda_N$, called
\wordDefinition{pseudo Dirichlet boundary} and \wordDefinition{pseudo Neumann Boundary}. As for $\Gamma_D$ and $\Gamma_N$, 
we require that $\Lambda_N$ is relatively open in $\partial\Omega$, while $\Lambda_D$ is relatively closed and has positive measure. Moreover, we assume that $\Omega\cup\Lambda_N$ is regular in the sense of Gr\"oger.
Therefore, according to \cite{herzog}, 
there is an index $\overline{p}$ such that, for every $p\in [\overline{p}', \overline{p}]$, the linear elasticity equation
\begin{equation}\label{eq:elastLambda}
    (\Cb \symnabla \upsilon, \symnabla \zeta)_{\Lt^2(\Omega)} = \dual{b}{\zeta} \quad \forall\, \zeta \in \bW^{1,p'}_\Lambda(\Omega), \quad 
    \upsilon \in \bW^{1,p}_\Lambda(\Omega)
\end{equation}
admits a unique solution in $\bW^{1,p}_\Lambda(\Omega)$ for every right hand side $b\in \bW^{-1,p}_\Lambda(\Omega)$.
Herein, $\bW^{1,p}_\Lambda(\Omega)$ is defined as $\bW^{1,p}_D(\Omega)$ in \cref{eq:sobolevdiri} with $\Lambda_D$ instead of $\Gamma_D$.
Depending on the precise geometrical structure, the index $\overline{p}$ may well differ from the one in 
\cref{lem:w1sExistence}, but, in order to ease the notation, we assume that both are equal (just take the minimum 
of both, which is still greater two). As in \cref{sec:optimalityConditions}, we fix $p \in (2, \overline{p}]$ in what follows and 
assume in addition that $p < 2n/(n-1)$.
Furthermore, we require that $\Gamma_D \subset \Lambda_N$ and that $\Gamma_D$ and $\Lambda_D$ have positive distance to each other, i.e., 
\begin{equation}\label{eq:distdiri}
    \dist(\Gamma_D, \Lambda_D) = \inf_{x\in \Lambda_D, \, \xi\in\Gamma_D} |x-\xi| > 0.
\end{equation}
Similarly to \cref{eq:defT}, we denote the linear and continuous solution operator of \cref{eq:elastLambda} by 
$\TT_\Lambda: \bW^{-1,p}_\Lambda(\Omega) \to \bW^{1,p}_\Lambda(\Omega)$. This operator will also be considered as a mapping 
from $\bH^{-1}_\Lambda(\Omega)$ to $\bH^{1}_\Lambda(\Omega)$, which we denote by the 
same symbol. Since $p < 2n/(n-1)$ by assumption, Sobolev embeddings and trace theorems give that the embedding and trace operator
\begin{equation*}
    E : \bW^{1,p'}_\Lambda(\Omega) \to \bL^2(\Omega), 
    \quad \tr :  \bW^{1,p'}_\Lambda(\Omega)\to \bL^2(\Lambda_N)
\end{equation*} 
are compact. With these definitions at hand, we define $\XX$ and $\XX_c$ by
\begin{equation}\label{eq:XX}
	\XX := \bH^{-1}_\Lambda(\Omega) \quad \text{and} \quad
	\XX_c := \bL^2(\Omega) \times \bL^2(\Lambda_N)
\end{equation}
so that, due to the compactness of $E$ and $\tr$, we indeed have that $\XX_c$ is compactly embedded in $\bW^{-1,p}_\Lambda(\Omega)\embed \XX$.
Moreover, considered as an operator from $\XX = \bH^{-1}_\Lambda(\Omega)$ to $\bH^{1}(\Omega)$, 
we simply set $\GG := \TT_\Lambda$, 
while, with a slight abuse of notation, we define $\GG$ as an operator from $\XX_c$ to $\bW^{1,p}_\Lambda(\Omega)$ by 
\begin{equation}\label{eq:defG}
	\GG := \TT_\Lambda \circ (E^*, \tr^*), 
\end{equation}
i.e., given $(f,g) \in \XX_c$, $\GG$ is the solution operator of \cref{eq:elastLambda} with 
$\dual{b}{\zeta} = (f,\zeta)_{\bL^2(\Omega)} + (g,\zeta)_{\bL^2(\Lambda_N)}$.
Note that, since $\XX_c \embed \bW^{-1,p}_\Lambda(\Omega)$, this equation indeed admits a solution in $\bW^{1,p}_\Lambda(\Omega)$.
Moreover, the following result shows that our control space $\XX_c$ is ``large enough'':

\begin{lemma}
\label{lem:controlSpaceIsLarge}
    There holds 
    $\TT(0, \bH^2(\Omega)) \subset \TT(0, \GG(\XX_c))$, 
    where $\TT$ is the solution operator from \cref{eq:defT}.
\end{lemma}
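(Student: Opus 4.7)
The plan is to produce, for an arbitrary $u_D \in \bH^2(\Omega)$, an explicit element $\upsilon$ in the range of $\GG$ whose Dirichlet trace on $\Gamma_D$ agrees with that of $u_D$. Once this is done, the uniqueness part of \cref{lem:w1sExistence} immediately gives $\TT(0,u_D) = \TT(0,\upsilon) = \TT(0,\GG(f,g))$, which is exactly the asserted inclusion.

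The construction of $\upsilon$ proceeds by a cutoff argument that exploits the separation condition \cref{eq:distdiri}. Concretely, I would choose $\chi \in C^\infty(\overline\Omega)$ with $\chi\equiv 1$ in a neighbourhood of $\Gamma_D$ and $\chi\equiv 0$ in a neighbourhood of $\Lambda_D$ (such a cutoff exists precisely because $\dist(\Gamma_D,\Lambda_D)>0$), and set $\upsilon := \chi\,u_D$. Then $\upsilon \in \bH^2(\Omega)$ by the product rule, it vanishes near $\Lambda_D$, and the Sobolev embedding $\bH^2(\Omega)\embed \bW^{1,p}(\Omega)$ valid for the range $p<2n/(n-1)$ fixed in the section gives $\upsilon \in \bW^{1,p}_\Lambda(\Omega)$.

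Next, I would define $f := -\div(\Cb\symnabla\upsilon) \in \bL^2(\Omega)$, using $\upsilon \in \bH^2(\Omega)$, and $g := (\Cb\symnabla\upsilon)\nu|_{\Lambda_N}$; since $\Cb\symnabla\upsilon \in \bH^1(\Omega)$, the trace theorem yields $g \in \bH^{1/2}(\Lambda_N)\embed \bL^2(\Lambda_N)$, so $(f,g) \in \XX_c$. An integration by parts against any test function $\zeta \in \bW^{1,p'}_\Lambda(\Omega)$ (for which the boundary contribution on $\Lambda_D$ vanishes) yields
\begin{equation*}
    (\Cb\symnabla\upsilon,\symnabla\zeta)_{\Lt^2(\Omega)} = (f,\zeta)_{\bL^2(\Omega)} + (g,\zeta)_{\bL^2(\Lambda_N)},
\end{equation*}
which, together with $\upsilon \in \bW^{1,p}_\Lambda(\Omega)$ and the uniqueness statement for \cref{eq:elastLambda}, identifies $\upsilon = \TT_\Lambda(E^* f + \tr^* g) = \GG(f,g)$ in view of the definition in \cref{eq:defG}. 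Finally, since $\upsilon - u_D = (\chi-1)u_D$ vanishes on a whole neighbourhood of $\Gamma_D$, it lies in $\bW^{1,p}_D(\Omega)$, so $\upsilon$ is an admissible Dirichlet lift of $u_D|_{\Gamma_D}$, closing the chain of equalities above.

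The proof is essentially routine once the cutoff has been set up; the only substantive point is that both ingredients of the construction are needed: \cref{eq:distdiri} is what allows the cutoff $\chi$ to exist, and the $\bH^2$-regularity of $u_D$ is what ensures that $f$ and $g$ land in $\bL^2$ rather than in weaker spaces.
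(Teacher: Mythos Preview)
Your proposal is correct and follows essentially the same route as the paper: both arguments multiply $u_D$ by a smooth cutoff supported away from $\Lambda_D$ and equal to one near $\Gamma_D$ (using \cref{eq:distdiri}), then read off $(f,g)\in\XX_c$ from the resulting $\bH^2$-function so that $\GG(f,g)$ recovers the cutoff product, and finally invoke the uniqueness in \cref{lem:w1sExistence} since the cutoff does not alter the trace on $\Gamma_D$. Your write-up is in fact slightly more explicit than the paper's about why the normal trace lands in $\bL^2(\Lambda_N)$ and why the integration-by-parts identity identifies $\upsilon$ with $\GG(f,g)$.
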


\begin{proof}
    Due to \cref{eq:distdiri}, there is a function $\phi \in C^\infty(\Rn; \mathbb{R})$ such that $0 \leq \phi \leq 1$,
	$\phi \equiv 1$ on $\Gamma_D$ and $\phi\equiv 0$ on $\Lambda_D$.
	Let $u_D \in \bH^2(\Omega)$ be arbitrary and define
	$\tilde{u}_D := \phi u_D \in \bH^2(\Omega) \cap \bW^{1,p}_\Lambda(\Omega)$. From construction of $\phi$ it follows that 
	such that $\TT(0, u_D) = \TT(0, \tilde{u}_D)$ holds. Moreover, if we define 
	$f := - \div \mathbb{C} \symnabla \tilde{u}_D \in \bL^2(\Omega)$ and
	$g := \tr \mathbb{C} \symnabla \tilde{u}_D \in \bL^2(\Lambda_N)$, then $\GG(f, g) = \tilde{u}_D$ and hence,
	$\TT(0, \GG(f,g)) =  \TT(0, u_D)$,	which proves the assertion.
\end{proof}

Let us now investigate the precise structure of the gradient of the reduced objective for this particular 
realization of $\GG$.

\begin{lemma}\label{lem:riesz}
    Let $\ell, h \in H^1_0(\XX_c)$ be arbitrary and denote the components of $\ell$ and $h$ by 
    $\ell_\Omega, h_\Omega \in H^1_0(\bL^2(\Omega))$ and $\ell_N, h_N \in H^1_0(\bL^2(\Lambda_N))$. Then
    \begin{equation}\label{eq:objabl}
        F_\delta'(\ell)h = \int_0^T \int_\Omega ( \boverdot\psi + \alpha \,\boverdot\ell_\Omega)\cdot \boverdot h_\Omega \,d x
        + \int_0^T \int_{\Lambda_N}  (\boverdot\psi  + \alpha \,\boverdot\ell_N)\cdot \boverdot h_N\, d s, 
    \end{equation}
    with $\psi \in H^2(\bH^{1}_\Lambda(\Omega)) \cap H^1_0(\bH^{1}_\Lambda(\Omega))$ defined by 
    \begin{equation}\label{eq:psidef}
        \psi(t) := \int_0^t \int_0^s q(r)\, dr\, ds - \frac{t}{T} \int_0^T \int_0^s q(r)\, dr\, ds,
    \end{equation}
    where $q\in L^2(\bH^{1}_\Lambda(\Omega))$ denotes the solution of 
    \begin{equation}\label{eq:qeq}
    \begin{aligned}
        & (\Cb \symnabla q(t), \symnabla \zeta)_{\Lt^2(\Omega)} = \\
        & \qquad \Big(\Cb\big(A_\delta'(\sigma(t) - \varepsilon\mathbb{B}z(t))\varphi(t) - \symnabla w_\varphi(t)\big), \symnabla \zeta\Big)_{\Lt^2(\Omega)}
        \quad \forall\, \zeta \in \bH^{1}_\Lambda(\Omega).
    \end{aligned}
   \end{equation}
   Thus the Riesz representation of $F_\delta'(\ell)$ w.r.t.~the $H^1_0(\XX_c)$-scalar product is $(E\psi, \tr \psi) + \alpha \ell$.
\end{lemma}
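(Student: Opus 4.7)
The starting point is \cref{prop:FsDerivative}, which already provides the identity
\begin{equation*}
    F_\delta'(\ell)h = (\mathfrak{q}, h)_{L^2(\XX_c)} + \alpha(\boverdot\ell, \boverdot h)_{L^2(\XX_c)}
\end{equation*}
with $\mathfrak{q} = \GG^*[-\div\Cb(A_\delta'(\sigma - \varepsilon\Bb z)\varphi - \symnabla w_\varphi)]$. The task therefore splits into two parts: first, unwind the action of $\GG^*$ to express $\mathfrak{q}(t)$ concretely as a pair in $\XX_c$; second, transfer a time derivative onto $h$ by integration by parts in time, where the boundary contributions will vanish thanks to the constraints $\ell, h \in H^1_0(\XX_c)$.

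For the first part, the plan is to exploit the factorization $\GG = \TT_\Lambda \circ (E^*, \tr^*)$ from \cref{eq:defG}, which, upon identifying $\XX_c \cong \XX_c^*$ by Riesz, yields $\GG^* = (E, \tr) \circ \TT_\Lambda^*$. Since the bilinear form defining $\TT_\Lambda$ is symmetric, $\TT_\Lambda$ is self-adjoint between $\bH^{-1}_\Lambda(\Omega)$ and $\bH^1_\Lambda(\Omega)$, so $\TT_\Lambda^* = \TT_\Lambda$. Reinterpreting $-\div\Cb\eta$ as the element of $\bH^{-1}_\Lambda(\Omega)$ acting on $\zeta\in\bH^1_\Lambda(\Omega)$ by $(\Cb\eta, \symnabla\zeta)_{\Lt^2(\Omega)}$, one checks that $\TT_\Lambda$ applied to $-\div\Cb(A_\delta'(\sigma(t) - \varepsilon\Bb z(t))\varphi(t) - \symnabla w_\varphi(t))$ is precisely the $q(t)\in \bH^1_\Lambda(\Omega)$ defined by \cref{eq:qeq}. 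Together with the continuity of $\TT_\Lambda$ and the regularity of $(\sigma, z, \varphi, w_\varphi)$, this yields $q \in L^2(\bH^1_\Lambda(\Omega))$ and $\mathfrak{q}(t) = (Eq(t), \tr q(t))$, so that unfolding the $\XX_c$-pairing gives $(\mathfrak{q}, h)_{L^2(\XX_c)} = \int_0^T\!\int_\Omega q\cdot h_\Omega\,dx\,dt + \int_0^T\!\int_{\Lambda_N} q\cdot h_N\,ds\,dt$.

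For the second part, a direct computation from \cref{eq:psidef} shows that $\psi(0) = \psi(T) = 0$ and $\boverdot{\boverdot\psi} = q$ in $L^2(\bH^1_\Lambda(\Omega))$, the affine correction being exactly what enforces the boundary condition at~$T$. In particular $\psi \in H^2(\bH^1_\Lambda(\Omega)) \cap H^1_0(\bH^1_\Lambda(\Omega))$. Substituting $q = \boverdot{\boverdot\psi}$ into the integrals above and integrating by parts once in time, the boundary terms vanish because $h_\Omega$ and $h_N$ vanish at $t = 0$ and $t = T$, and one arrives at \cref{eq:objabl} after including the Tikhonov contribution. The Riesz representation claim is then a direct reading of \cref{eq:objabl}, since the $H^1_0(\XX_c)$-inner product is $(u,v)\mapsto (\boverdot u, \boverdot v)_{L^2(\XX_c)}$ and the pair $(E\psi, \tr \psi)$ indeed lies in $H^1_0(\XX_c)$ by construction.

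The main subtlety is the correct identification of $\GG^*$: because $\GG$ takes values in $\bH^1_\Lambda(\Omega)$ rather than $\bH^1_D(\Omega)$, the formal expression $-\div\Cb\eta$ has to be reinterpreted as an element of $\bH^{-1}_\Lambda(\Omega)$, and this reinterpretation is exactly what fixes the boundary conditions hidden in the variational problem \cref{eq:qeq} defining $q$. Once this is sorted out, the rest of the argument is bookkeeping with integration by parts in time.
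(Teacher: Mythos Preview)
Your proposal is correct and follows essentially the same route as the paper's proof: both start from \cref{prop:FsDerivative}, use the factorization $\GG = \TT_\Lambda \circ (E^*,\tr^*)$ together with the self-adjointness of $\TT_\Lambda$ to identify $\mathfrak{q} = (Eq,\tr q)$ with $q$ solving \cref{eq:qeq}, and then integrate by parts in time using $\partial_{tt}^2\psi = q$ and $h(0)=h(T)=0$. Your explicit remark on reinterpreting $-\div\Cb(\cdot)$ as an element of $\bH^{-1}_\Lambda(\Omega)$ (rather than $\bH^{-1}_D(\Omega)$) is a clarification the paper leaves implicit, but it does not change the argument.
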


\begin{proof}
    The definition of $\GG$ in \cref{eq:defG} yields for $\mathfrak{q}$ as defined in \cref{eq:qdef}
    \begin{equation}\label{eq:frakq}
        \mathfrak{q} 
        = (E, \tr) \TT_\Lambda^* \big[-\div\Cb\big(A_\delta'(\sigma - \varepsilon\mathbb{B}z)\varphi - \symnabla w_\varphi\big)\big].
    \end{equation}
    Now, since $\varphi, w_\varphi\in C([0,T];\Lt^2(\Omega) \times \bH^1_D(\Omega))$ by \cref{lem:ad}, 
    we have $[-\div\Cb(A_\delta'(\sigma - \varepsilon\mathbb{B}z)\varphi - \symnabla w_\varphi)](t)\in \bH^{-1}_\Lambda(\Omega)$
    for all $t\in [0,T]$. As $\TT_\Lambda: \bH^{-1}_\Lambda(\Omega) \to \bH^{1}_\Lambda(\Omega)$ 
    is self adjoint due to the symmetry of $\Cb$, the definition of $q$ via \cref{eq:qeq} thus implies 
    $\mathfrak{q} = (E q, \tr q)$ and hence, \cref{eq:objderiv} becomes
    \begin{equation*}
        F_\delta'(\ell)h = \alpha (\boverdot\ell, \boverdot h)_{L^2(\XX_c)} + \int_0^T \int_\Omega q \cdot h_\Omega \, dx \,dt
        + \int_0^T \int_{\Lambda_N} q \cdot h_N\, ds \,dt.
    \end{equation*}
    Since $\partial_{tt}^2\psi = q$ by construction, integration by parts in time implies the assertion.
\end{proof}

The precise structure of $\mathfrak{q}$ in \cref{eq:frakq} together with the gradient equation in \cref{eq:gradeq} immediately 
gives the following regularity result:

\begin{corollary}\label{cor:regularity}
    If $\GG$ is chosen as in \cref{eq:defG}, then the set of local minimizers of \cref{eq:optprobReducedS} is 
    a subset of $H^2(\bH^{1}_\Lambda(\Omega)) \cap H^1_0(\bH^{1}_\Lambda(\Omega))$.
\end{corollary}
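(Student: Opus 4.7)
The plan is to combine the first--order necessary condition \cref{eq:gradeq} from \cref{thm:KKTConditions} with the explicit Riesz representation of $F_\delta'(\ell)$ derived in \cref{lem:riesz}. Concretely, I would start with a local minimizer $\ell \in H^1_0(\XX_c)$, use the derivative formula from \cref{lem:riesz} in the equivalent form
\begin{equation*}
    F_\delta'(\ell)h = \int_0^T \!\!\int_\Omega ( \boverdot\psi + \alpha \,\boverdot\ell_\Omega)\cdot \boverdot h_\Omega \,d x\,dt
    + \int_0^T\!\! \int_{\Lambda_N} (\boverdot\psi  + \alpha \,\boverdot\ell_N)\cdot \boverdot h_N\, d s\,dt = 0,
\end{equation*}
valid for all $h = (h_\Omega, h_N) \in H^1_0(\XX_c)$, with $\psi \in H^2(\bH^{1}_\Lambda(\Omega)) \cap H^1_0(\bH^{1}_\Lambda(\Omega))$ as defined in \cref{eq:psidef}.

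Next, I would decouple the two integrals by testing first with $h$ whose $\Lambda_N$-component vanishes, and then with $h$ whose $\Omega$-component vanishes. The first choice, ranging $h_\Omega$ over all of $H^1_0(\bL^2(\Omega))$, implies that the $\bL^2(\Omega)$-valued mapping $t \mapsto \boverdot\psi(t) + \alpha \boverdot\ell_\Omega(t)$ is constant in time. Combined with the boundary conditions $\psi(0) = \psi(T) = 0$ and $\ell_\Omega(0) = \ell_\Omega(T) = 0$, integration in time forces this constant to be zero and consequently yields $\alpha\,\ell_\Omega(t) = - E\psi(t)$ for almost all $t \in (0,T)$. The symmetric argument on the $\Lambda_N$-part produces $\alpha\,\ell_N(t) = - \tr\psi(t)$.

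It remains to read off the claimed regularity. Setting $\tilde\psi := -\psi/\alpha$, \cref{lem:riesz} gives that $\tilde\psi \in H^2(\bH^{1}_\Lambda(\Omega)) \cap H^1_0(\bH^{1}_\Lambda(\Omega))$, while the identities above state that $\ell_\Omega = E\tilde\psi$ and $\ell_N = \tr\tilde\psi$, i.e.\ both components of $\ell$ are simultaneously the interior embedding and the trace of one and the same $\bH^1_\Lambda(\Omega)$-valued function. Under this identification, $\ell$ itself lies in $H^2(\bH^{1}_\Lambda(\Omega)) \cap H^1_0(\bH^{1}_\Lambda(\Omega))$, which is the assertion.

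No real obstacle is expected: the non--trivial analytic work is already encapsulated in \cref{thm:KKTConditions} (existence of the gradient and the boundary condition $\ell(0) = \ell(T) = 0$) and in \cref{lem:riesz} (the Riesz identification via the auxiliary elliptic problem \cref{eq:qeq} that delivers the spatially $\bH^1_\Lambda$--valued function $\psi$). The one point requiring a touch of care is the passage from ``derivative vanishes tested against $\boverdot h_{\Omega/N}$'' to a pointwise relation between $\ell$ and $\psi$; this is handled by the standard ``primitive of $L^2$ equals a constant'' argument combined with the homogeneous endpoint conditions.
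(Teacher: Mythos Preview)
Your proposal is correct and follows essentially the same route as the paper. The paper's one-line justification invokes the gradient equation \cref{eq:gradeq} together with the structure $\mathfrak{q} = (Eq,\tr q)$ from \cref{eq:frakq}, which amounts to exactly what you do: once the Riesz representative $(E\psi,\tr\psi) + \alpha\ell$ from \cref{lem:riesz} is set to zero, the identification $\ell = -\tfrac{1}{\alpha}(E\psi,\tr\psi)$ with $\psi \in H^2(\bH^{1}_\Lambda(\Omega)) \cap H^1_0(\bH^{1}_\Lambda(\Omega))$ is immediate. Your decoupling-by-test-functions step is a slightly more explicit way of reading off that the Riesz representative vanishes, but it is not needed---you could simply quote the last sentence of \cref{lem:riesz} directly.
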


The characterization of the Riesz representation of the gradient of the reduced objective in \cref{lem:riesz} is of course crucial 
for the construction of gradient based optimization methods. We observe that, if we start with an initial guess for the control 
of the form $(E \ell_0, \tr \ell_0)$ with a function $\ell_0 \in H^2(\bH^{1}_\Lambda(\Omega)) \cap H^1_0(\bH^{1}_\Lambda(\Omega))$, then the gradient update will preserve this structure, 
i.e., the next iterate $\ell_1 := \ell_0 - \sigma_0 (\psi_0 + \alpha \ell_0)$ with a suitable step size $\sigma_0 > 0$ will again 
be an element of $H^2(\bH^{1}_\Lambda(\Omega)) \cap H^1_0(\bH^{1}_\Lambda(\Omega))$. Note moreover that, due to the additional regularity of locally optimal controls 
in \cref{cor:regularity}, it makes perfectly sense to restrict to control functions in $H^2(\bH^{1}_\Lambda(\Omega)) \cap H^1_0(\bH^{1}_\Lambda(\Omega))$.
The overall computation of the reduced gradient by means of the adjoint approach is given as a pseudo-code in \cref{alg:gradcomp}. 

\begin{algorithm}[H]
\caption{Computation of the Reduced Gradient}\label{alg:gradcomp}
\begin{algorithmic}[1]
    \Require control function  $\ell \in H^2(\bH^{1}_\Lambda(\Omega)) \cap H^1_0(\bH^{1}_\Lambda(\Omega))$ 
    \State\label{it:ud}
    Compute the Dirichlet data $u_D $ by solving for all $t\in [0,T]$
    \begin{equation*}
      (\Cb \symnabla \upsilon(t), \symnabla \zeta)_{\Lt^2(\Omega)} =\int_\Omega \ell(t)\cdot \zeta\, dx
      + \int_{\Lambda_N} \ell(t) \cdot \zeta \, ds  \quad \forall\, \zeta \in \bW^{1,p'}_\Lambda(\Omega).
    \end{equation*}
    \State Compute the state $(u, \sigma, z)$ as solution of \cref{eq:smoothedStateEquation} 
    with $u_D$ from step \ref{it:ud}.
    \State Solve the adjoint equation in \cref{eq:adjointEquation} with solution $(w_\varphi, \varphi, w_T)$.
    \State Compute $q$ as solution of \cref{eq:qeq}.
    \State Integrate $q$ according to \cref{eq:psidef} to obtain $\psi$.
    \State \Return $\psi + \alpha \ell$ as Riesz representative of $F_\delta'(\ell)$.
\end{algorithmic}
\end{algorithm}



Based on \cref{alg:gradcomp}, gradient-based first-order optimization algorithm like the classical gradient descent method 
or nonlinear CG methods can be used to solve the smoothed problem \cref{eq:optprobReducedS}. 
For the computations in \cref{subsec:results} below, we used a standard gradient method with an Armijo line search.
As termination criterion, we require that the norm of the gradient is smaller than the tolerance \texttt{TOL} = 5e-04.
If this criterion is not met, the algorithm will stop after $100$ iterations.
Note that the natural scalar product (and associated norm) for the termination criterion as well as for the step size control is 
\begin{equation*}
    (\boverdot g, \boverdot \ell)_{L^2(\XX_c)} 
    = (\boverdot g, \boverdot \ell)_{L^2(\bL^2(\Omega))} + (\boverdot g, \boverdot \ell)_{L^2(\bL^2(\Gamma_N))}.
\end{equation*}

\subsection{Discretization}

In order to obtain an implementable algorithm, we need to discretize the PDEs in \cref{alg:gradcomp}. 
We follow the ``\emph{first optimize, then discretize}''-approach, i.e., we discretize the 
continuous gradient as given in \cref{alg:gradcomp}, see \cref{rem:fotd} below.

Let us begin with the discretization in space. The computational domain is discretized by means of a 
regular triangulation, which exactly fits the boundary (which does not cause any trouble in our test scenarios, 
since our computational domain is polygonally bounded).
For the displacement-like variables $u$, $w_\varphi$, $w_T$, and $q$, we use 
standard continuous and piecewise linear finite elements, whereas the stress- and strain-like variables
$\sigma$, $z$, and $\varphi$ are discretized by means of piecewise constant ansatz functions. 
The state system is reduced to displacement and plastic strain only by eliminating the stress field 
by means of \cref{eq:smoothedStateEquationB}. We are aware that 
this type of discretization will in general lead to locking effects, but we assume that these can be neglected, 
as we do not consider ``thin'' computational domains. A suitable discretization of state and adjoint equation 
accounting for locking is however essential, especially in case of stress tracking, and therefore subject to future research.

Concerning the time discretization, we apply an implicit Euler scheme to \cref{eq:zodesmooth} and \cref{eq:phiode}.
The numerical integration for the computation of $\psi$ and the evaluation of the objective
is performed by an exact integration of the 
linear interpolant built upon the iterates of the implicit Euler scheme. 

To solve the discretized equations in every iteration of the implicit Euler scheme, 
we use the finite element toolbox FEniCS (version 2018.1.0).
The nonlinear state equation is solved by the FEniCS's inbuilt Newton-solver
with a relative and absolute tolerance of $10^{-10}$. 

\begin{remark}\label{rem:fotd}
	Let us emphasize that our ``first optimize, then discretize''-approach leads to a mismatch 
	between the discretization of the derivative of the reduced objective in function space and the derivative of the 
	discretized objective.
    Thus, the ``gradient'' computed by means of a discretization of \cref{alg:gradcomp} does not coincide with the 
    true discrete gradient. 
    In our numerical experiments, it however turned out that,
    as expected, this mismatch only plays a role for large
    time step sizes (as expected) and small values of $\lambda$, see \cref{tab:nt_comparison} below. 
\end{remark}

\subsection{The Test Setting}
\label{subsec:aConcreteSetting}

For our numerical test, we choose the following data:

\paragraph{Domain}
The two-dimensional computational domain is set to $\Omega := (0,4) \times (0,1) \subset \mathbb{R}^2$
with the boundaries $\Gamma_D := [\{ 0 \} \cup \{ 4 \}] \times [0,1]$, 
$\Lambda_D := [1,3] \times [\{ 0 \} \cup \{ 1 \}]$
and $\Gamma_{N} := \partial\Omega \setminus \Gamma_{D}$, $\Lambda_N := \partial\Omega \setminus \Lambda_D$.

\paragraph{Elasticity tensor, hardening and smoothing parameters}
We choose typical material parameters of steel:
\begin{align*}
	&E = 210\; \big[\textup{kN/mm}^2\big] &&\text{(Young's modulus)}, \\
	&\nu = 0.3 &&\text{(Poisson's ratio)}, \\
	&\begin{aligned}
    	\lambda &= \frac{E \nu}{(1 + \nu) (1 - 2\nu)}
	    \thickapprox 121.1538 \; \big[\textup{kN/mm}^2\big] \\[-0.5ex]
	    \mu &= \frac{E}{2 + 2\nu} \thickapprox 80.7692 \;\big[\textup{kN/mm}^2\big]
	\end{aligned}
	&&\text{(Lam\'e parameters)}, \\
	&\gamma  = 0.45 \;\big[\textup{kN/mm}^2\big] &&\text{(uniaxial yield stress)}
\end{align*}
and define the elasticity tensor by 
$\mathbb{C}\boldsymbol\epsilon:= \lambda \tr(\boldsymbol\epsilon) I + 2\mu \,\boldsymbol\epsilon$
for all $\boldsymbol\epsilon \in \Rnns$.

In our numerical tests, we set $\varepsilon = 0$ such that there is no hardening.
We again underline that this case is covered by our analysis, see \cref{assu:regularization_parameters}
and \ref{assu:optimalityConditions}(iii).

The smoothing parameter $\delta$ of the $\operatorname{max}$-function in \eqref{eq:maxsmooth}
is set to $10^{-8}$. During the numerical experiments, it turned out that 
this parameter appears to have only little influence on the results and the performance of 
the algorithm so that we simply fix it to this value.

\paragraph{End time and initial condition}
We set $T = 1$ and $\sigma_0 \equiv 0$.

\paragraph{Desired Dirichlet displacement}
The offset in the Dirichlet condition is chosen to be 
$\mathfrak{a}(t) := t \,\mathfrak{a}_e$, where
$\mathfrak{a}_e(x,y) := \frac{1}{200}(x - 2, 0)$ for $(x,y) \in \Omega$.

\paragraph{Optimization problem}
We set the desired stress to zero, i.e., $\sigma_d \equiv 0$, and the Tikhonov parameter
$\alpha$ to $10^{-4}$. 

The above setting is motivated by the following application-driven optimization problem:
The aim of the optimization is to reach a desired displacement of the Dirichlet boundary
(given by $\mathfrak{a}_e$) and, at the same time, to minimize the overall stress distribution at end time.
For this reason, the left and right boundary of the body occupying $\Omega$ is pulled apart constantly in time. 
The control $\ell$ (respectively $u_D$) can alter this process for $t \in (0,T)$, but at the end (and also the beginning)
the control is zero, hence, the position of the Dirichlet boundary at $t = T$ is predefined,
namely by the desired $\mathfrak{a}_e$.
The minimization of the stress at end time is reflected by setting $\sigma_d \equiv 0$ and choosing a 
comparatively small Tikhonov parameter.

\subsection{Numerical Results}
\label{subsec:results}

Let us finally present the numerical results. 
In order to assess the impact of the Yosida regularization, we vary the parameter $\lambda$
and consider the distance of the stress field to the feasible set $\KK(\Omega)$ 
at the end of the iteration as an indicator for the effect of the regularization.
To be more precise, given the feasible set of the von Mises yield condition in \eqref{eq:vonmises} and 
a discrete solution $\sigma_h$, we compute
\begin{equation*}
    \dist_\KK := \operatorname*{ess\, sup}_{(t,x) \in (0,T) \times \Omega}
    \frac{|\sigma_h^D(t,x)|_F - \gamma}{\gamma}.
\end{equation*}
Furthermore, we evaluate the error induced by the inexact computation of the reduced gradient 
caused by the first-optimize-then-discretize approach. 
It turned out that this error is entirely induced by the time discretization while
the spatial discretization had no effect here (which is to be expected, as we used a Galerkin scheme).
Therefore, we vary the time step size and use the difference between in the (inexact) directional derivative 
and a difference quotient as error indicator.
To describe this in detail, let $\ell_{h}$ denote the (discrete) control variable in the last iteration and denote 
the inexact reduced gradient computed by the discretized counterpart of \cref{alg:gradcomp} by $g_h$. 
Then we compute
\begin{equation*}
    \textup{err} =
    \Bigg|\frac{\dual{g_h}{-g_h}_{H^1_0(\XX_c)} -
			\tau^{-1}\big(F_\delta(\ell_h - \tau \,g_h) - F_\delta(\ell_h)\big)}
			{\tau^{-1}\big(F_\delta(\ell_h - \tau \,g_h) - F_\delta(\ell_h)\big)}\Bigg|,
\end{equation*}
i.e., we compute the relative error of the directional derivative in the anti-gradient direction (which is also our search direction).
The step size in the difference quotient is set to $\tau = 10^{-8}$.

\cref{tab:lambda_comparison} shows the numerical results for different values of $\lambda$. For the computations, 
we chose an equidistant time step size by dividing $[0,T]$ in $n_t = 128$ intervals of the same length. 
The spatial mesh is equidistant, too, with $n_x = 64$ elements in horizontal and $n_y = 16$ in vertical direction.
Recall that we focus on the last iteration of the gradient method, that is,
either the norm of the gradient was smaller than  \texttt{TOL} = 5e-04 
(i.e., $\dual{g_h}{-g_h}_{H^1_0(\XX_c)} \geq - \texttt{TOL}^2 = - 2.5\cdot 10^{-7}$) or the 100th iteration was reached.
\begin{table}[h!]
	\begin{center}
	\begin{tabular}{c||c|c|c|c|c}
		$\lambda$
		&
		iteration
		&
		$\dual{g_h}{-g_h}_{H^1_0(\XX_c)}$
		&
	    $\frac{F_\delta(\ell_h - \tau \,g_h) - F_\delta(\ell_h)}{\tau}$
		&
	    err
		&
		$\dist_\KK$ \\[0.7ex]
		\hline\hline
		0.001 & 100 & -4.7174e-07 & -4.8520e-07 & 0.027751 & 0.00048 \\
		0.01 & 25 & -2.0089e-07 & -2.0869e-07 & 0.037369 & 0.00192 \\
		0.1 & 33 & -2.4687e-07 & -2.5552e-07 & 0.033854 & 0.01781 \\
		1 & 58 & -2.1643e-07 & -2.1790e-07 & 0.006773 & 0.13652 \\
		10 & 100 & -2.0106e-06 & -2.0122e-06 & 0.000833 & 0.62584 \\
		100 & 62 & -2.4884e-07 & -2.4876e-07 & 0.000338 & 5.31148
	\end{tabular}
	\end{center}
\caption{
Comparison of the numerical results for different values of
$\lambda$.
\label{tab:lambda_comparison}}
\end{table}
We observe that the adjoint approach becomes less accurate for small values of $\lambda$
reflecting the non-smoothness of the limit problem. Furthermore, 
the relative distance of $|\sigma_h^D|_F$ to the yield stress $\gamma$
decreases when $\lambda$ decreases, illustrating the efficiency Yosida-regularization.

In \cref{tab:nt_comparison}, we analyze the impact of the number of time steps
on the last iteration of the gradient method. The spatial mesh is again equidistant with
$n_x = 64$ and $n_y = 16$ and we set $\lambda = 1$.
\begin{table}[h!]
	\begin{center}
	\begin{tabular}{c||c|c|c|c|c}
		\textbf{$n_t$}
		&
		iteration
		&
		$\dual{g_h}{-g_h}_{H^1_0(\XX_c)}$
		&
	    $\frac{F_\delta(\ell_h - \tau \,g_h) - F_\delta(\ell_h)}{\tau}$
		&
	    err
		&
		$\dist_\KK$ \\[0.7ex]
		\hline\hline
		4 & 55 & -2.4601e-07 & -3.1816e-07 & 0.226817 & 0.0502 \\
		8 & 51 & -2.3590e-07 & -2.8903e-07 & 0.183828 & 0.0478 \\
		16 & 52 & -2.4577e-07 & -2.6541e-07 & 0.074012 & 0.0497 \\
		32 & 45 & -2.4318e-07 & -2.5225e-07 & 0.035941 & 0.1066 \\
		64 & 77 & -2.4627e-07 & -2.5056e-07 & 0.017121 & 0.1017 \\
		128 & 58 & -2.1643e-07 & -2.1790e-07 & 0.006773 & 0.1365 \\
		256 & 34 & -2.4476e-07 & -2.4562e-07 & 0.003481 & 0.1417 \\
		512 & 48 & -2.2542e-07 & -2.2541e-07 & 0.000045 & 0.1318 \\
		1024 & 43 & -1.9258e-07 & -1.9225e-07 & 0.001736 & 0.1339 \\
		2048 & 41 & -2.3150e-07 & -2.3165e-07 & 0.000662 & 0.1339
	\end{tabular}
	\end{center}
\caption{
Comparison of the numerical results for different numbers of time steps.
\label{tab:nt_comparison}}
\end{table}
We observe that, as expected, the relative error of the directional derivative
decreases when the number of time steps increases such that the error caused by the first-optimize-then-discretize approach 
disappears if the time step size goes to zero. Moreover, for larger number of time steps, the time discretization has no effect 
on the feasibility of the stress (which is of course mainly influenced by the Yosida parameter as seen before).

We end the description of our numerical results with the time evolution of the stress field after optimization. 
For these computations, we set  $\lambda = 1$, $n_t = 256$, $n_x = 128$,	and	$n_y = 32$.
The result of the optimization after 150 iterations in form of the stress field at selected time points
is shown in \cref{fig:one}.
Therein,
and also in
\cref{fig:two},
the displacement was scaled
by a factor 20.
\begin{figure}[h!]
    \centering
    \includegraphics[height=5cm, angle=270]{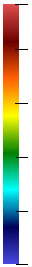}
    \put(-147,-24){0.0}
    \put(-120,-24){0.2}
    \put(-91,-24){0.4}
    \put(-62,-24){0.6}
    \put(-33,-24){0.8}
    \put(-8,-24){1.0}
    \caption{Legend; values in $\big[\textup{kN/mm}^2\big]$.}
\end{figure}
\begin{figure}[h!]
\centering
  	\begin{subfigure}[h]{0.45\textwidth}
    		\includegraphics[width=\textwidth]{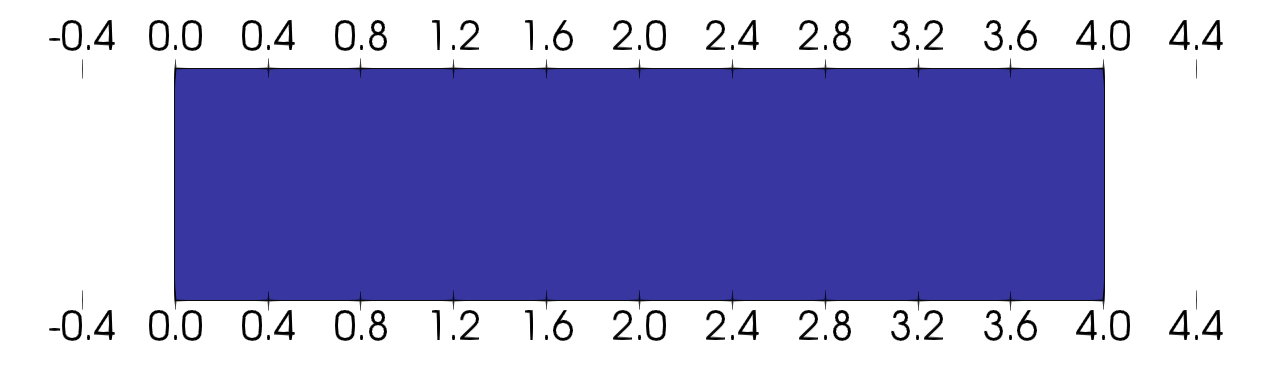}
    		\caption{$n_t = 0$, $t = 0$}
  	\end{subfigure}
  	\hfill
  	\begin{subfigure}[h]{0.45\textwidth}
    		\includegraphics[width=\textwidth]{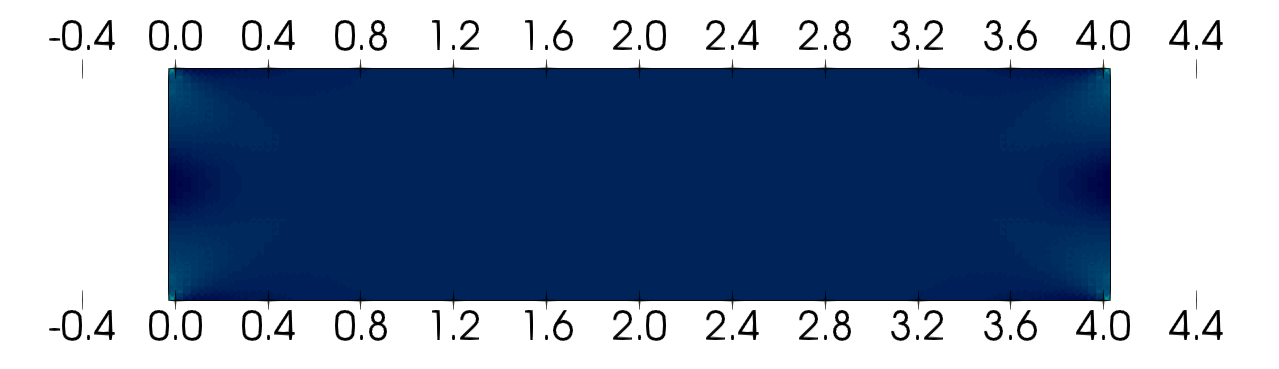}
    		\caption{$n_t = 21$, $t \approx 0.0820$}
  	\end{subfigure}
  	\hfill
  	\begin{subfigure}[h]{0.45\textwidth}
    		\includegraphics[width=\textwidth]{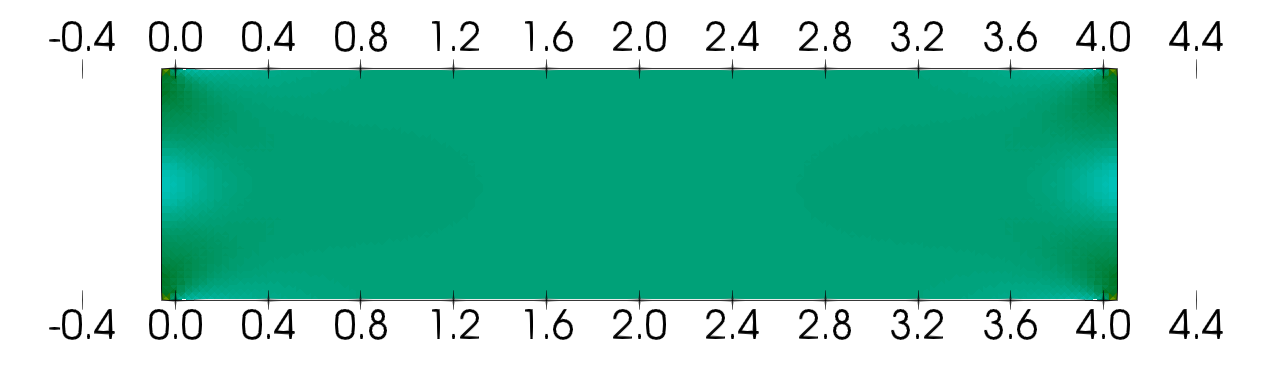}
    		\caption{$n_t = 42$, $t \approx 0.1641$}
  	\end{subfigure}
  	\hfill
  	\begin{subfigure}[h]{0.45\textwidth}
    		\includegraphics[width=\textwidth]{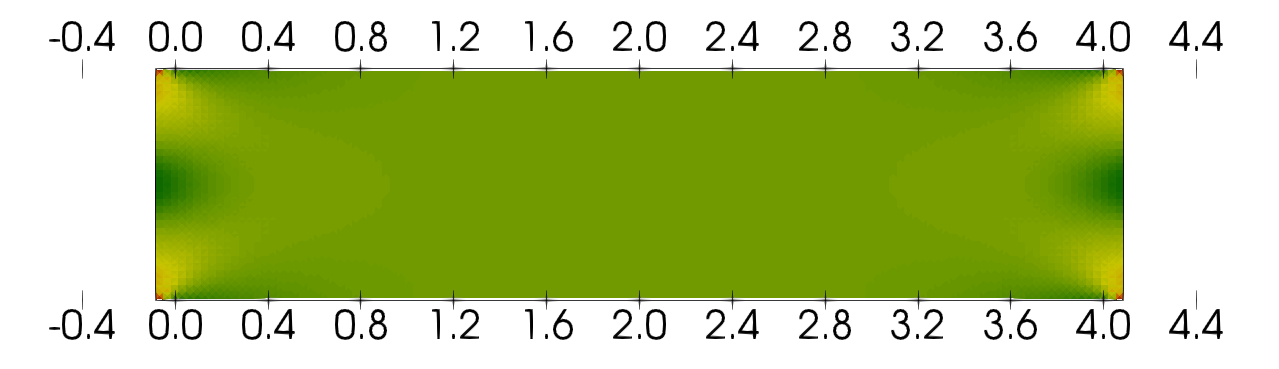}
    		\caption{$n_t = 63$, $t \approx 0.2461$}
  	\end{subfigure}
  	\hfill
  	\begin{subfigure}[h]{0.45\textwidth}
    		\includegraphics[width=\textwidth]{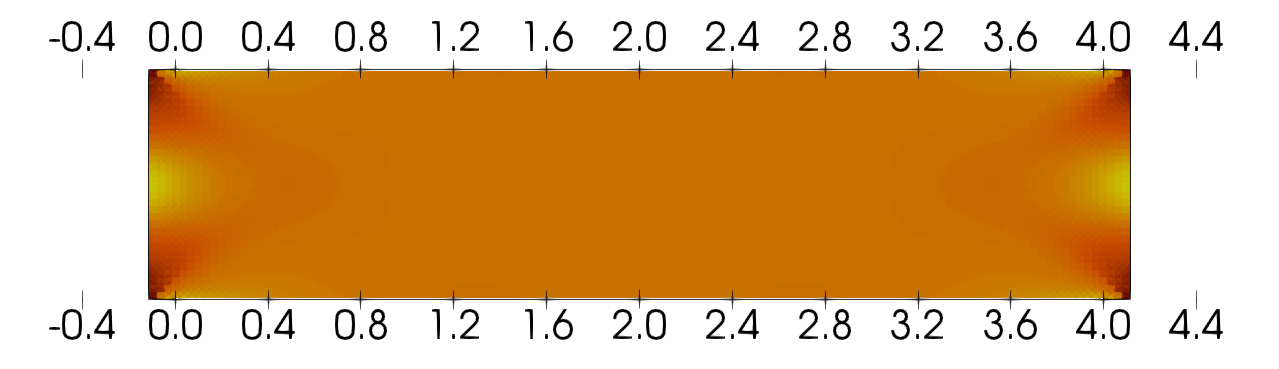}
    		\caption{$n_t = 84$, $t \approx 0.3281$}
  	\end{subfigure}
  	\hfill
  	\begin{subfigure}[h]{0.45\textwidth}
    		\includegraphics[width=\textwidth]{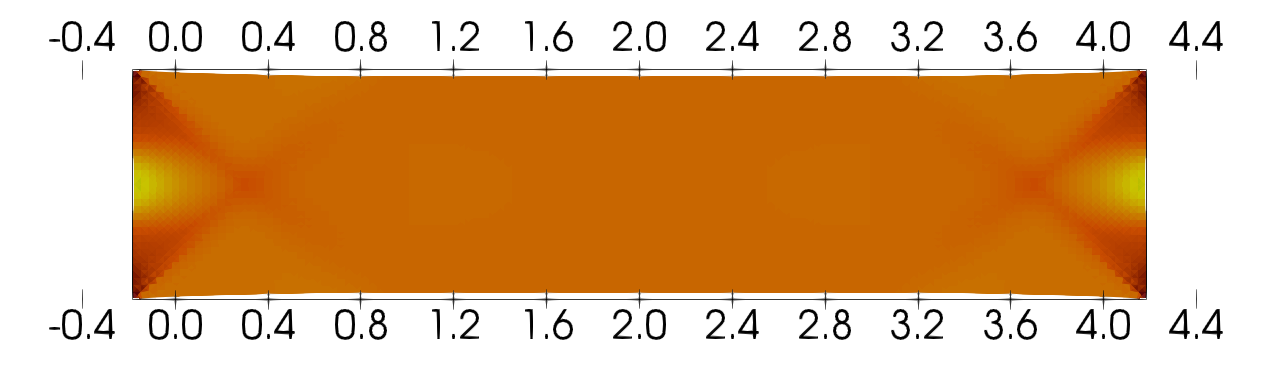}
    		\caption{$n_t = 136$, $t \approx 0.5312$}
  	\end{subfigure}
  	\hfill
  	\begin{subfigure}[h]{0.45\textwidth}
    		\includegraphics[width=\textwidth]{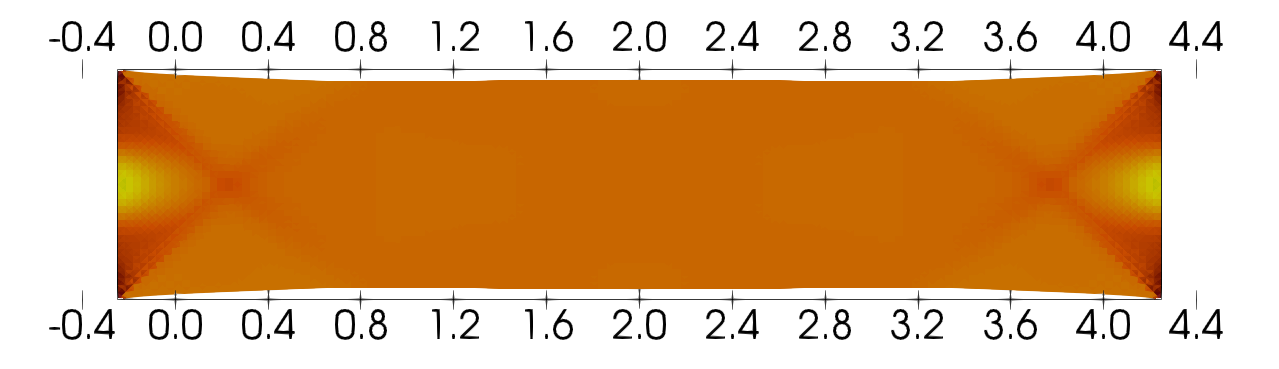}
    		\caption{$n_t = 188$, $t \approx 0.7343$}
  	\end{subfigure}
  	\hfill
  	\begin{subfigure}[h]{0.45\textwidth}
    		\includegraphics[width=\textwidth]{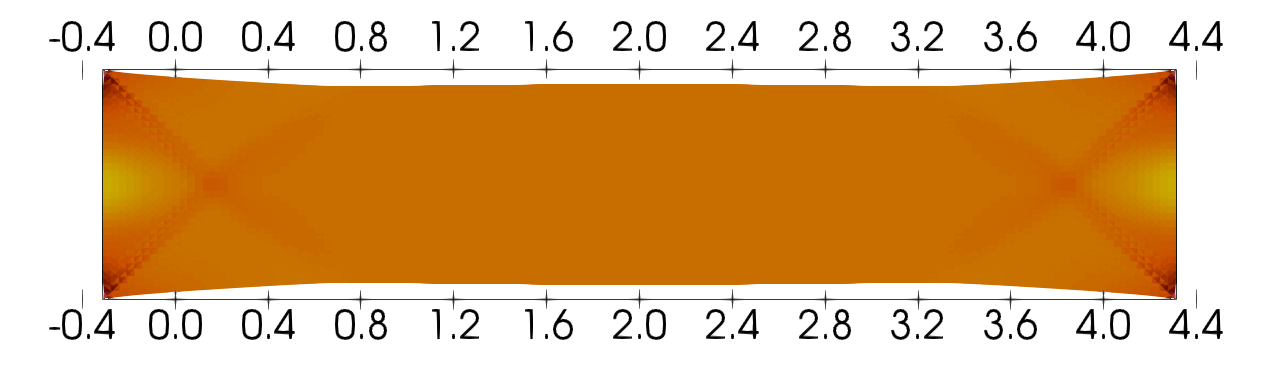}
    		\caption{$n_t = 240$, $t \approx 0.9375$}
  	\end{subfigure}
  	\hfill
  	\begin{subfigure}[h]{0.45\textwidth}
    		\includegraphics[width=\textwidth]{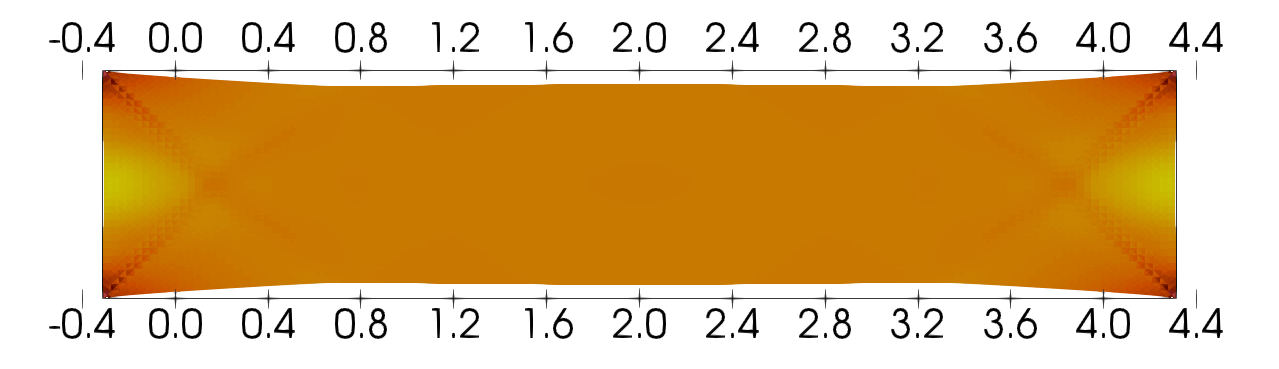}
    		\caption{$n_t = 244$, $t \approx 0.9531$}
  	\end{subfigure}
  	\hfill
  	\begin{subfigure}[h]{0.45\textwidth}
    		\includegraphics[width=\textwidth]{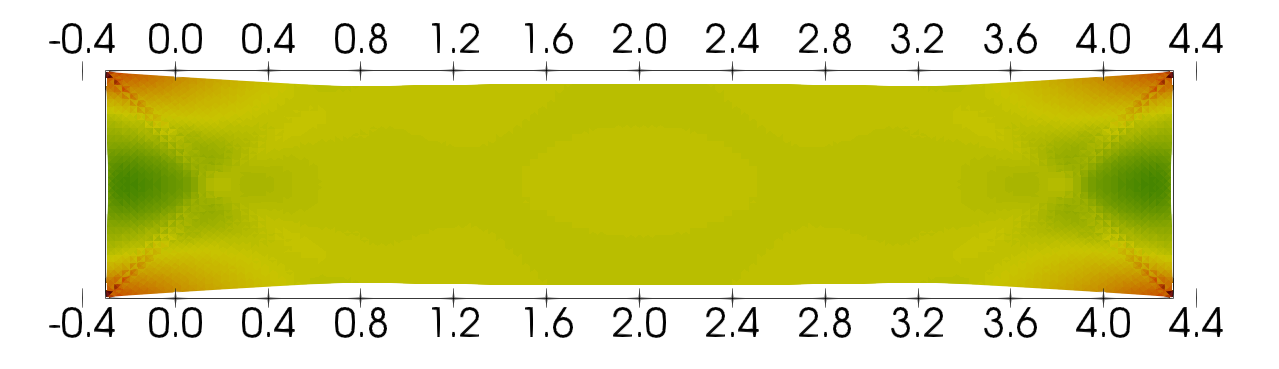}
    		\caption{$n_t = 248$, $t \approx 0.9688$}
  	\end{subfigure}
  	\hfill
  	\begin{subfigure}[h]{0.45\textwidth}
    		\includegraphics[width=\textwidth]{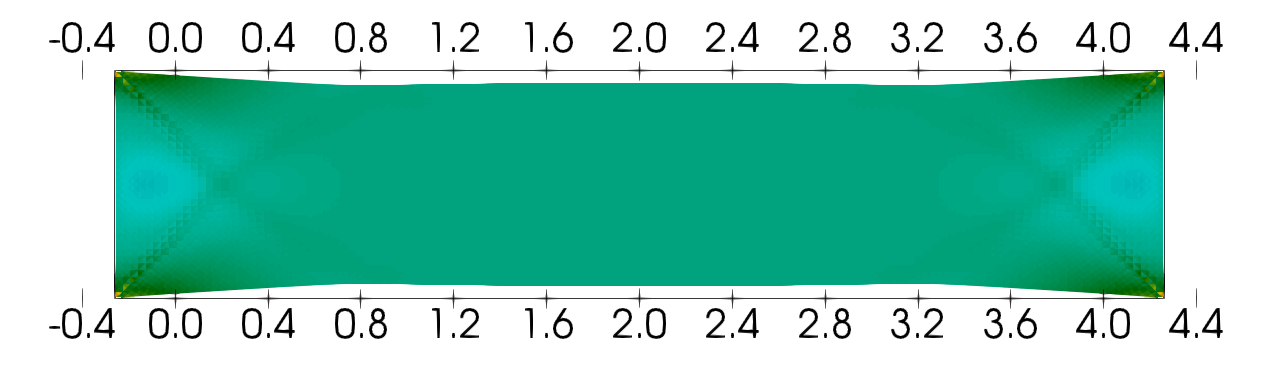}
    		\caption{$n_t = 252$, $t \approx 0.9844$}
  	\end{subfigure}
  	\hfill
  	\begin{subfigure}[h]{0.45\textwidth}
    		\includegraphics[width=\textwidth]{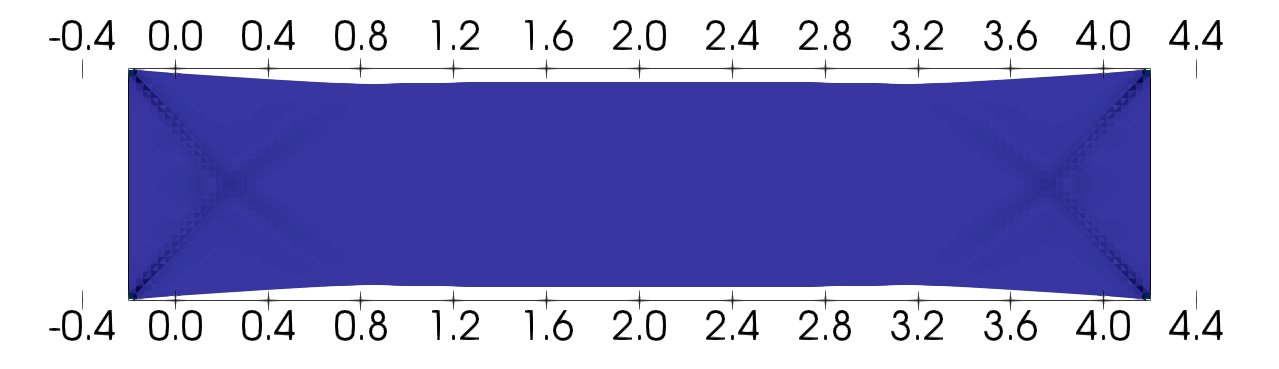}
    		\caption{$n_t = 256$, $t = 1$}
  	\end{subfigure}
  	\caption{
  		Evolution of $|\sigma(x,t)|_F$.
  		\label{fig:one}
  	}
\end{figure}
We observe that until $n_t = 84$ the norm of the stress
increases constantly in time. Afterwards, between $n_t = 84$
and $n_t = 240$, the yield surface is reached
and the norm of the stress stays almost constant.
Moreover, until $n_t = 240$ the beam is slowly but constantly pulled apart.
From $n_t = 240$ on, the beam is fast pressed together and
the norm of the stress shrinks to almost zero as desired.
\begin{figure}[h!]
	\begin{subfigure}[h]{0.4\textwidth}
    		\includegraphics[width=\textwidth]{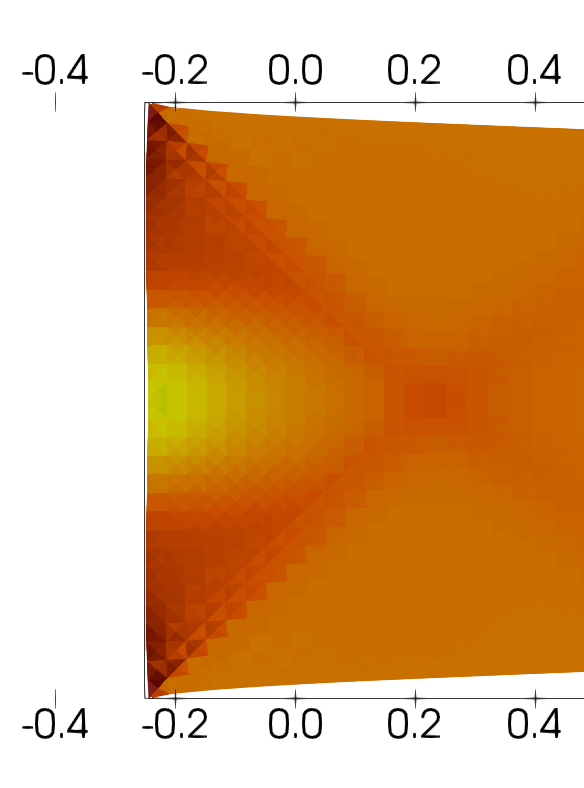}
    		\caption{$n_t = 188$, $t \approx 0.7343$}
  	\end{subfigure}
  	\hfill
  	\begin{subfigure}[h]{0.4\textwidth}
    		\includegraphics[width=\textwidth]{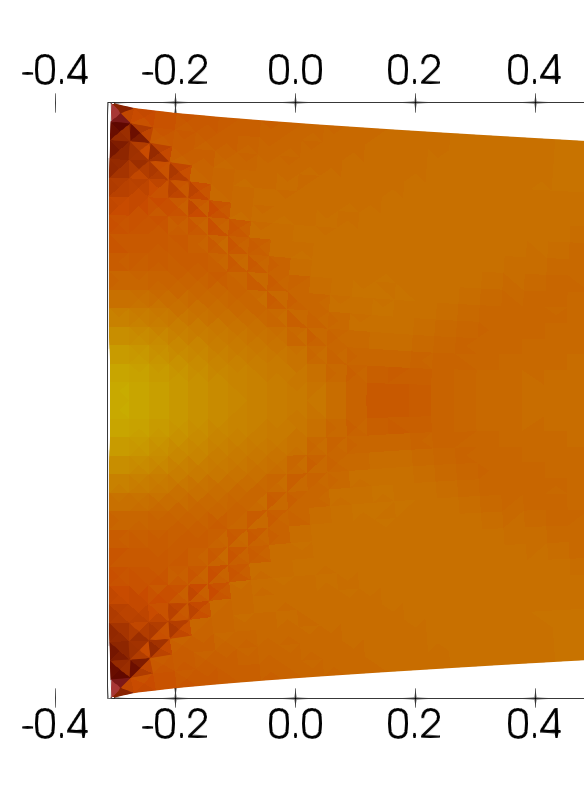}
    		\caption{$n_t = 240$, $t \approx 0.9375$}
  	\end{subfigure}
  	\hfill
  	\begin{subfigure}[h]{0.4\textwidth}
    		\includegraphics[width=\textwidth]{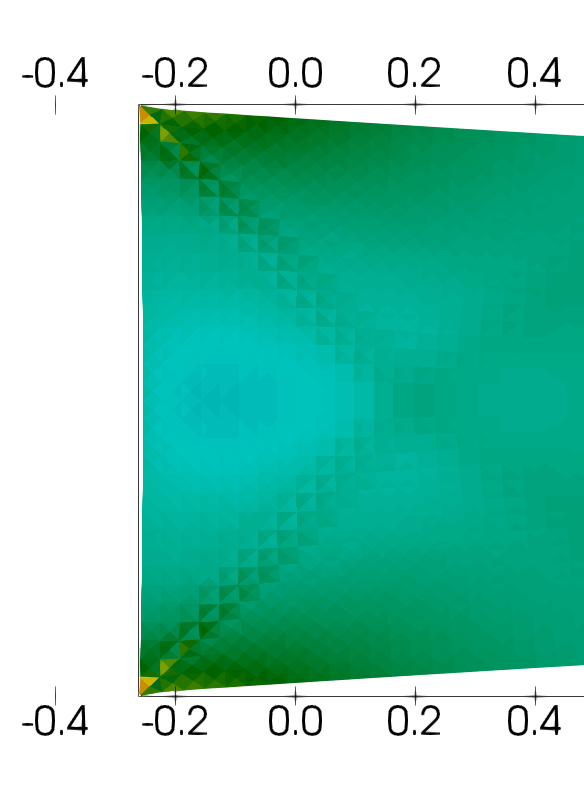}
    		\caption{$n_t = 252$, $t \approx 0.9844$}
  	\end{subfigure}
  	\hfill
  	\begin{subfigure}[h]{0.4\textwidth}
    		\includegraphics[width=\textwidth]{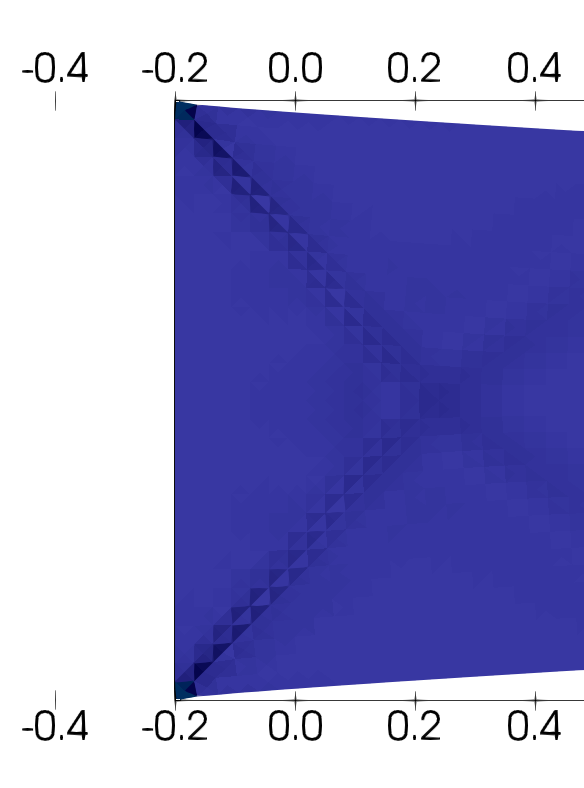}
    		\caption{$n_t = 256$, $t = 1$}
  	\end{subfigure}
  	\caption{
		Zoom to the left part of the beam from \cref{fig:one}.
		\label{fig:two}
  	}
\end{figure}
\cref{fig:two} shows a zoom to the left Dirichlet boundary. We observe 
that the optimal displacement of the Dirichlet boundary is not
constant in vertical direction. Instead there is a slight curvature of the Dirichlet boundary, 
i.e., the optimal Dirichlet displacement pulling the beam in horizontal direction slightly varies 
in vertical direction during the evolution.


\bibliographystyle{siamplain}
\bibliography{references}
\end{document}